\newcommand{\rsp}{\raisebox{0em}[2.7ex][1.3ex]{\rule{0em}{2ex} }}
\newtheorem{theorem}{Theorem}[section]
\newtheorem*{maintheorem}{Theorem}
\newtheorem*{mainproposition}{Proposition}
\newtheorem*{maincorollary}{Corollary}
\newtheorem*{maindefinition}{Definition}
\newtheorem{lemma}[theorem]{Lemma}
\newtheorem{corollary}[theorem]{Corollary}
\theoremstyle{remark}
\newtheorem{remark}[theorem]{\bf Remark}
\newtheorem{example}[theorem]{\bf Example}
\renewcommand*{\backref}[1]{}\renewcommand*{\backrefalt}[4]{\ifcase #1 (\tt not cited)\or (\tt cited on page~#2)\else (\tt cited on pages~#2)\fi}
\def\NN{\mathbb{N}}
\def\RR{\mathds{R}}
\def\HH{I\!\! H}
\def\QQ{\mathbb{Q}}
\def\CC{\mathds{C}}
\def\ZZ{\mathbb{Z}}
\def\DD{\mathds{D}}
\def\OO{\mathcal{O}}
\def\kk{\mathds{k}}
\def\KK{\mathbb{K}}
\def\ho{\mathcal{H}_0^{\frac{h(d)}{2}}}
\def\LL{\mathbb{L}}
\def\L{\mathds{k}_2^{(2)}}
\def\M{\mathds{k}_2^{(1)}}
\def\k{\mathds{k}^{(*)}}
\def\l{\mathds{L}}
\def\kk{\mathds{k}}
\begin{document}
	
	\def\NN{\mathbb{N}}
	\def\RR{\mathds{R}}
	\def\HH{I\!\! H}
	\def\QQ{\mathbb{Q}}
	\def\CC{\mathds{C}}
	\def\ZZ{\mathbb{Z}}
	\def\DD{\mathds{D}}
	\def\OO{\mathcal{O}}
	\def\kk{\mathds{k}}
	\def\KK{\mathbb{K}}
	\def\ho{\mathcal{H}_0^{\frac{h(d)}{2}}}
	\def\LL{\mathbb{L}}
	\def\L{\mathds{k}_2^{(2)}}
	\def\M{\mathds{k}_2^{(1)}}
	\def\k{\mathds{k}^{(*)}}
	\def\l{\mathds{L}}
	\def\2r{\mathrm{rank}}
	\def\rg{\mathrm{rank}}
	\def\C{\mathrm{C}}
	\def\Y{\mathbf{Y}}
	\def\V{\mathbf{V}}
	\def\vep{\varepsilon}

	\selectlanguage{english}
	
	
	\title[The Maximal Unramified Pro-$2$-Extension over the   $\mathbb{Z}_2$-Extension]{On the Existence of the Maximal Unramified Pro-$2$-Extension over the Cyclotomic $\mathbb{Z}_2$-Extension with Prescribed Metacyclic Galois Group}
	
	
	\author[M. M. Chems-Eddin]{Mohamed Mahmoud Chems-Eddin}
	\address{Mohamed Mahmoud CHEMS-EDDIN: Department of Mathematics, Faculty of Sciences Dhar El Mahraz, Sidi Mohamed Ben Abdellah University, Fez,  Morocco}
	\email{2m.chemseddin@gmail.com}
	
	\author[H. El Mamry]{Hamza El Mamry}
	\address{Hamza EL MAMRY: Departement of Mathematics, Faculty of Sciences Dhar El Mahraz,  Sidi Mohamed Ben Abdellah University, Fez, Morocco}
	\email{Hamza.elmamry@usmba.ac.ma}

	\subjclass[2010]{20D15, 11R27,  11R23, 11R29, 11R37.}
	\keywords{Metacyclic  $2$-Group, Cyclotomic $\mathbb Z_2$-Extensions, Greenberg's Conjecture, Multiquadratic Fields}

	\begin{abstract}
		For an     integer $m\geq 2$, we     aim   to investigate the realizability of types of metacyclic-nonmodular  groups,     whose  abelianization is $\ZZ/2 \ZZ\times\ZZ/2^m \ZZ$,  as the Galois group of   the maximal unramified  $2$-extension (resp. pro-$2$-extension) over certain number fields of $2$-power degree (resp. cyclotomic $\mathbb Z_2$-extensions).  Furthermore, we present some new techniques for studying Greenberg's conjecture for some  number fields. 
		In particular, the reader can find results concerning the real quadratic fields $F=\QQ(\sqrt{\eta q rs})$, the real biquadratic fields $K=\QQ(\sqrt{\eta q},\sqrt{rs})$, with $\eta\in\{1,2\}$, and the Fröhlich  multiquadratic   fields   of the form $\mathbb F=\QQ(\sqrt{q }, \sqrt {r},  \sqrt{s})$, where  $q$, $r$ and $s$ are  odd prime  numbers.
	\end{abstract}

	\selectlanguage{english}
	
	\maketitle

	\section{\bf  Introduction}

	Let $n\geq 1$ be a natural number and $G$ be a finite metabelian $2$-group of rank $2$ such that $G^{ab}:=G/G'  \simeq\ZZ/2 \ZZ\times\ZZ/2^n \ZZ$,  where $G':=[G,G]$ is the
	derived subgroup of $G$ and so $G^{ab}$ is the abelianization of   $G$. 
	The study of the realizability of these $2$-groups as the   Galois group of the maximal  unramified extension of quadratic fields was developed  by Kisilevsky,  Benjamin and     Snyder (cf. \cite{Ben17,Ben2006,Ben1999,benjashnepreprint1993,Ki76}).
	
	\bigskip
	
	For $n=1$, it is known that $G$ is in one of the following structures: Klein four-group,      the quaternion,  dihedral or semidihedral groups (cf. \cite{Ki76}).
	The study of the realizability of these $2$-groups  as the   Galois group of the maximal unramified pro-$2$-extension of certain cyclotomic $\mathbb Z_2$-extensions  was motivated by Mizusawa in his thesis (cf. \cite{MizusawaThesis}). Subsequently, this has been extensively investigated by several authors in previous years (cf. \cite{MizusawaAnnMath2014,Ajn,MohibMathNachr2016} and references cited therein). 
	 	For the reader's  convenience, recall that a   $\ZZ_2$-extension of a number field $k$ is an infinite extension of $k$ denoted by  $k_\infty$ such that 
	 	$\mathrm{Gal}(k_\infty/k)$ is topologically isomorphic to the additive group of $ \ZZ_2$, where $\ZZ_2$ is the ring of $2$-adic integers. For each $s\geq 1$, the extension $k_\infty/k$ contains a unique field    of degree $2^s$ denoted by $k_s$ and called the $s$th layer  of the   $\ZZ_2$-extension of $k$. Furthermore, we have:
	 $k=k_0\subset k_1 \subset k_2 \subset\cdots \subset k_s \subset \cdots  \subset k_\infty=\bigcup_{s\geq 0} k_s.$ 
	In particular, if $  \QQ_{2, s}$ denote the field $\QQ(2\cos( {2\pi}/{2^{s+2}}))$, for all $s\geq 1$, then $k_\infty= \bigcup k_s$, where $k_s= k\QQ_{2, s}$,  is called the   cyclotomic $\ZZ_2$-extension of $k$.  
	The inverse limit $A(k_\infty):=\varprojlim A(k_s)$
	with respect to the norm maps is called the $2$-Iwasawa module of $k$, where $A(k)$   denotes the $2$-class group   of a number field $k$.
		For more details     we refer the reader to \cite[Chapter 13]{washington1997introduction}. 
	We would like to draw the reader's attention to the fact that the notation $k_s$ 
	 is not always used to denote the $s$-th layer of the cyclotomic $\ZZ_2$-extension of $k$; its meaning depends on the context.
	
	\bigskip

	For $n >1$,   it is known that $G$ is generated by two elements $a$ and $b$ and 
	admits a representation by generators and relations as follows:
	$$G=\langle  a, b\ | \ a^2\equiv b^{2^n} \equiv 1 \text{ mod } G'\rangle   .$$

	The $2$-group $G$ is classified (cf. \cite[Sec. 2 and 3]{benjashnepreprint1993}) in one of the following structures: {\bf abelian}, {\bf modular}, {\bf metacyclic-nonmodular} or   { \bf nonmetacyclic}. For convenience of readers, we recall that a modular group is defined as follows:
	\begin{maindefinition}
		A modular group $M$ is a $2$-group of order $2^m$, with $m >3$, that is generated by two elements $a$ and $b$ and admits the following presentation:
		$$M=\langle  a, b\ | \ a^2= b^{2^{m-1}} = 1 \text{ and  }  [a,b]= b^{2^{m-2}}\rangle  ,$$
		where $[a,b]$ is the commutator of $a$ and $b$.
	\end{maindefinition}
	
	\bigskip 
	Notice that,  {\bf for us, a  metacyclic-nonmodular is not abelian}. 
	The inverse Galois problem for this case was recently  investigated by some authors, including Azizi,   Rezzougui and       Zekhnini,  where they were interested in   the realizability  of finite metacyclic $2$-groups $G$ as the Galois group of the maximal unramified pro-$2$-extension of the cyclotomic $\mathbb Z_2$-extension of some number fields (cf. \cite{AziziRezzouguiZekhniniPeriodica,AziziRezzouguiZekhniniDebrecen}). However,  
	a finite  metacyclic $2$-groups $G$ such that $G^{ab}:=G/G'  \simeq\ZZ/2 \ZZ\times\ZZ/2^n \ZZ$ is in one of the following structures:  {\bf abelian}, {\bf modular} or {\bf metacyclic-nonmodular}. With this in mind, it is legitimate to ask the following question:
	
	\begin{center}
		{\bf Question 1:}	Are all these different structures of metacyclic $2$-groups realizable as the Galois group of the maximal unramified pro-$2$-extension of certain cyclotomic $\mathbb Z_2$-extensions ?
	\end{center}
	
	Moreover,    Benjamin and     Snyder \cite{benjashnepreprint1993} showed that   metacyclic-nonmodular groups whose abelianization is isomorphic to $\ZZ/2 \ZZ\times\ZZ/2^n \ZZ$ are classified as follows:
	\begin{mainproposition}[\cite{benjashnepreprint1993}, Proposition 2]
		Let $M$ be a  metacyclic-nonmodular group  whose abelianization is isomorphic to $\ZZ/2 \ZZ\times\ZZ/2^n \ZZ$, with $n >1$,  and such that   $M=\langle a, b\ | \ a^2\equiv b^{2^n} \equiv 1 \text{ mod } M' \rangle   $. Then $M$ admits a presentation in one of the following types, where $\alpha$, $s$ and $k$ are integers: 
		
		
		$\begin{array}{cl}
			Type  \ 1:&  a^{2^\alpha}=1,  \  b^{2^n}=1,\  b^{-1}ab=a^{-1} , \   \alpha >1.\\
			Type \ 2:&  a^{2^\alpha}=1, \ b^{2^n}=a^{2^{\alpha-1}} ,\ b^{-1}ab=a^{-1} ,  \ \alpha >1 .\\
			Type \ 3:& a^{2^\alpha}=1, \ b^{2^n}=1 ,\ b^{-1}ab=a^{-1+k2^s} ,  \ \alpha >1, \ \alpha >s >1, k \text{ is odd}  .\\
			Type \ 4:& a^{2^\alpha}=1, \ b^{2^n}=a^{2^{\alpha-1}} ,\ b^{-1}ab=a^{-1+k2^s} ,  \ \alpha >1, \ \alpha >s >1, k \text{ is odd}  . 
		\end{array} $ 
	\end{mainproposition}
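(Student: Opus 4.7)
The plan is to exploit the metacyclic structure of $M$ to realise it as (a central extension of) a semidirect product $\langle a\rangle\rtimes\langle b\rangle$, and then read off the relations from $M^{ab}\simeq\ZZ/2\ZZ\times\ZZ/2^n\ZZ$ together with $a^2\equiv b^{2^n}\equiv 1\pmod{M'}$.

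First, since $M$ is metacyclic it contains a cyclic normal subgroup $C$ with $M/C$ cyclic; I would argue that, after possibly replacing the generators, one may take $C=\langle a\rangle$ of order $2^\alpha$ with $\alpha\geq 2$, and $bC$ of order $2^n$ in $M/C$. Write $b^{-1}ab=a^t$ with $t$ odd. Since $\langle a\rangle$ is abelian, every commutator $[a^i,b^j]=a^{i(t^j-1)}$ lies in $\langle a^{t-1}\rangle$, so $M'=\langle a^{t-1}\rangle$; the condition $a^2\in M'$ then gives $\gcd(t-1,2^\alpha)\mid 2$, and combined with $t$ odd forces $v_2(t-1)=1$. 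Moreover $b^{2^n}$ centralises $a$, so $t^{2^n}\equiv 1\pmod{2^\alpha}$.

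Appealing to $(\ZZ/2^\alpha\ZZ)^{\ast}\simeq\langle -1\rangle\times\langle 5\rangle$ for $\alpha\geq 3$ and the $2$-adic identities $v_2(5^j-1)=v_2(j)+2$ (for $j\geq 1$) and $v_2(5^j+1)=1$, the elements $t\in(\ZZ/2^\alpha\ZZ)^{\ast}$ with $v_2(t-1)=1$ are exactly $t\equiv -5^j\pmod{2^\alpha}$ for some $j\geq 0$. When $j=0$ this gives $t=-1$; when $j\geq 1$ one writes $t=-1-(5^j-1)=-1+k\cdot 2^s$ with $k$ odd, $s=v_2(j)+2\geq 2$ and $s<\alpha$ (else $t\equiv -1\pmod{2^\alpha}$). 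This splits $t$ into the two families appearing in the statement. To pin down $b^{2^n}$, write $b^{2^n}=a^r$; the centralisation condition gives $r(t-1)\equiv 0\pmod{2^\alpha}$, whence $2^{\alpha-1}\mid r$ and $b^{2^n}\in\{1,a^{2^{\alpha-1}}\}$. Crossing the two families of $t$ with the two choices for $b^{2^n}$ reproduces exactly Types~1--4.

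The main obstacle I anticipate is the very first step: from the loose presentation $M=\langle a,b\mid a^2\equiv b^{2^n}\equiv 1\bmod M'\rangle$, reconciling the generators with a metacyclic decomposition having $\langle a\rangle$ as the normal cyclic factor of $2$-power order. Since $M$ may admit several cyclic normal subgroups, one has to verify that the $\ZZ/2\ZZ$-summand of $M^{ab}$ is actually covered by an element whose cyclic hull is normal; the nonmodular hypothesis is precisely what rules out the alternative in which the normal cyclic subgroup is generated (up to replacement) by $b$ and the commutator $[a,b]$ is a power of $b$ rather than of $a$.
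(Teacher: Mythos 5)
Your computational core is correct, and it is worth saying that the paper itself offers nothing to compare it against: the proposition is quoted from the unpublished Benjamin--Snyder preprint \cite{benjashnepreprint1993} with no proof reproduced, so your argument has to stand entirely on its own. Granting the first step --- that one may choose generators with $C=\langle a\rangle\trianglelefteq M$ of order $2^\alpha$, $M/C$ cyclic of order $2^n$ generated by $bC$, and $a^2\equiv b^{2^n}\equiv 1\bmod M'$ --- everything after it is sound: $M'=\langle a^{t-1}\rangle$ (note $\langle a^{t-1}\rangle$ is $b$-invariant since $t$ is a unit), the hypothesis $a^2\in M'$ together with nonabelianness forces $v_2(t-1)=1$; the parametrization of $\{t: t\equiv 3\pmod 4\}$ as $\{-5^j\}$ in $(\ZZ/2^\alpha\ZZ)^{\ast}=\langle-1\rangle\times\langle 5\rangle$ with $v_2(5^j-1)=v_2(j)+2$ correctly yields $t=-1$ or $t=-1+k2^s$, $k$ odd, $1<s<\alpha$; and $b^{2^n}=a^r$ with $r(t-1)\equiv 0\pmod{2^\alpha}$ and $v_2(t-1)=1$ indeed pins $b^{2^n}\in\{1,a^{2^{\alpha-1}}\}$. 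Crossing the cases reproduces Types 1--4.

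The genuine gap is the reduction you yourself flag, and the dichotomy you offer to close it is not exhaustive. You claim nonmodularity only has to exclude the alternative in which the normal cyclic subgroup is $\langle b\rangle$ and $[a,b]$ is a power of $b$. But for an arbitrary metacyclic decomposition $M=\langle x\rangle\langle y\rangle$ with $y^{-1}xy=x^t$ and $e:=v_2(t-1)$, the image of $\langle x\rangle$ in $M^{ab}\simeq\ZZ/2\ZZ\times\ZZ/2^n\ZZ$ is a cyclic subgroup of order $2^e$ with cyclic quotient, and for $2\le e\le n$ such subgroups include \emph{mixed} ones, generated by an element of the form $(1,v)$ with $v$ of order $2^e$, not only the pure $\ZZ/2^n\ZZ$-factor that occurs in the modular group $M_{2^m}$. (When the image is the full second factor, one can check $|x|=2^{n+1}$, $t\equiv 2^{n}+1$, and every choice of $y^2\in\langle x\rangle$ collapses back to $M_{2^{n+2}}$ --- that case does go as you hope; the mixed cases with $2\le e\le n$ are where the real work lies and are untouched by your remark.) So what remains to be proved is precisely: every nonabelian, nonmodular metacyclic $2$-group with the stated abelianization admits \emph{some} decomposition with $e=1$, i.e., a normal cyclic $\langle a\rangle$ whose generator covers the $\ZZ/2\ZZ$-summand, together with a complement generator $b$ for which the presentation constraints $a^2\equiv b^{2^n}\equiv 1\bmod M'$ hold --- a condition you use essentially (to force $v_2(t-1)=1$) but which is not automatic after replacing generators, so it must be re-established as part of the same reduction. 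As written, the proposal proves the classification only for groups already given in the privileged decomposition, not for an abstract metacyclic-nonmodular $M$.
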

	
	This also led us to ask the following second question:
	\begin{center}
		{\bf Question 2:}	Are all these four types  of metacyclic-nonmodular groups realizable as the Galois group of the maximal unramified pro-$2$-extension  of certain cyclotomic $\mathbb Z_2$-extensions (resp. $2$-extension of some number fields)?
	\end{center}
	
	In this paper, we investigate these two questions, which lead us to examine  Greenberg's conjecture for certain multiquadratic fields, focusing in particular on the $4$-rank of their $2$-Iwasawa module.
	In fact, the main results are summarized as follows:
	\bigskip

	The following result gives a necessary and sufficient condition for   families of real quadratic fields to have   
	a   {metacyclic-nonmodular pro-$2$-group of Type 1 with $\alpha=2$} as the Galois group of the maximal unramified pro-$2$-extension. Let $\eta$ be an element of $\{1,2\}$, $h_2(d)$ and
	$ \varepsilon_{d}$ be respectively the $2$-class number and the fundamental unit of a real quadratic number field $\QQ(\sqrt{d})$.  	Let $\delta_{a,b}$ be   the Kronecker delta, i.e.,  $\delta_{a,b}=1$ or $0$ according to whether $a=b$ or not.    We have:
	
	\begin{maintheorem}[Theorem \ref{etatheoremmetacyclic-nonmodular}]
		Let $ q\equiv     3\pmod 4$ and  $r\equiv s\equiv    5\pmod 8$   be  distinct   prime numbers such that $\left(\frac{q}{r}\right)=\left(\frac{q}{s}\right)= (-1)^{\delta_{\eta,2}}$, 
		$\left(\frac{r}{s}\right)=1$    and  $N(\varepsilon_{rs})=1$.  Put $F=\QQ(\sqrt{\eta qrs})$ and 	$ \varepsilon_{\eta qrs}=\gamma+\gamma'\sqrt{\eta qrs}$.
		Assume  that $2^{\delta_{\eta,1}}q(\gamma-1)$    is not a square  in $\NN$. 
		Then, we have: $$A(F)\simeq A(F_\infty)\simeq \ZZ/2\ZZ\times \ZZ/2^m\ZZ,$$
		where $m$  is such that $h_2(\eta qrs)=2^{m+1}$. Furthermore, the Galois group
		$\mathrm{Gal}(\mathcal{L}(F_\infty)/F_\infty)$,  where $\mathcal{L}(F_\infty)$ denotes the maximal unramified pro-$2$-extension  of $F_\infty$, is   
		a   {\bf metacyclic-nonmodular pro-$2$-group of Type 1 with $\alpha=2$} if and only if $\left(\frac{r}{s}\right)_4\not=\left(\frac{s}{r}\right)_4$.
	\end{maintheorem}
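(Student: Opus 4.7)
The argument divides naturally into three parts: determining $A(F)$, promoting it to $A(F_\infty)$, and then identifying the Galois group of $\mathcal L(F_\infty)/F_\infty$.

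First I would use genus theory to compute the $2$-rank of $A(F)$. Since $q\equiv 3\pmod 4$ divides the radicand in both cases $\eta\in\{1,2\}$, the fundamental unit satisfies $N(\varepsilon_{\eta qrs})=+1$; the rational primes ramifying in $F/\QQ$ are exactly $\{2,q,r,s\}$, so R\'edei's genus formula gives $\mathrm{rank}_2 A(F)=4-2=2$. The hypothesis that $2^{\delta_{\eta,1}}q(\gamma-1)$ is not a square in $\NN$ is a classical unit-theoretic criterion (controlling which partition of the discriminant is realized by the factorisation $(\gamma-1)(\gamma+1)=\gamma'^{2}\,\eta qrs$) that forces the two generators of the $2$-elementary quotient of $A(F)$ produced by genus theory not to both be squares of ideal classes; equivalently $\mathrm{rank}_4 A(F)=1$. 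Combined with $h_2(\eta qrs)=2^{m+1}$, this pins down $A(F)\simeq\ZZ/2\ZZ\times\ZZ/2^m\ZZ$.

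Next, to establish $A(F)\simeq A(F_\infty)$, I would show that the norm maps $A(F_{n+1})\to A(F_n)$ are all isomorphisms. Only the prime(s) above $2$ ramify (totally) in $F_\infty/F$, so a Chevalley-style ambiguous-class formula shows that the ambiguous classes at each layer are already represented in $A(F)$; combined with Part 1, this forces $A(F_n)\simeq A(F)$ for every $n\ge 0$, and passing to the inverse limit yields $A(F_\infty)\simeq A(F)$. In Iwasawa-theoretic language, this amounts to checking that $\mu_2(F)=\lambda_2(F)=0$ and that Greenberg's conjecture stabilizes at the base.

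Finally, set $G:=\mathrm{Gal}(\mathcal L(F_\infty)/F_\infty)$. The previous steps give $G^{ab}\simeq\ZZ/2\ZZ\times\ZZ/2^m\ZZ$, so $G$ belongs to one of the classes in the classification recalled in the excerpt. To identify it as metacyclic-nonmodular of Type~1 with $\alpha=2$, I would compute the $2$-rank of the class group of the first step $F_\infty^{(1)}$ in the $2$-class field tower of $F_\infty$, and read off $G'$ via the Artin map; the expected outcome $|G'|=2$ forces $|G|=2^{m+2}$, $G$ metacyclic with $\alpha=2$. The distinction between Type~1 (where $b^{2^n}=1$, giving a dihedral-like semidirect product) and Type~2 (where $b^{2^n}=a^{2^{\alpha-1}}$, giving a generalized-quaternion/dicyclic structure) is detected by whether a specific ambiguous ideal class admits a square root inside the class field tower. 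A Scholz-type quartic reciprocity, using $N(\varepsilon_{rs})=+1$ and the given Legendre-symbol hypotheses, identifies this condition precisely with the equality $\left(\frac{r}{s}\right)_4=\left(\frac{s}{r}\right)_4$; hence the inequality of $4$th-power residue symbols characterizes exactly the Type~1 case with $\alpha=2$. The main anticipated obstacle is this last step: translating the group-theoretic Type~1/Type~2 alternative into the explicit quartic residue identity requires delicate Artin-symbol bookkeeping in $F_\infty^{(1)}/F_\infty$ together with a careful application of Scholz's reciprocity law.
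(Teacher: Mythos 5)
Your outline has the right global shape (compute $A(F)$, stabilize along the tower, classify $G$), but it misallocates the key hypothesis and leaves the decisive step as a black box. In the paper, the assumption that $2^{\delta_{\eta,1}}q(\gamma-1)\not\in\NN^2$ has nothing to do with the $4$-rank: by Lemma \ref{etaqrs} and Wada's method it forces the unit index $q(F_1)=1$ for $F_1=\QQ(\sqrt{2},\sqrt{qrs})$, whence by Kuroda's class number formula $h_2(F_1)=h_2(\eta qrs)=h_2(F)$ (Corollaries \ref{unitqrs} and \ref{corllaryh(F)=h(qrs)}), and Fukuda's theorem then propagates $h_2(F_n)=2^{m+1}$ to all layers. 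The $4$-rank-one statement for every $F_n$ is instead imported from \cite[Theorem 5.9]{AziziRezzouguiZekhniniPeriodica} and rests on the Legendre-symbol hypotheses. Your stability step, as written, would fail: with one totally ramified prime, the Chevalley ambiguous class number formula bounds $|A(F_1)^{\mathrm{Gal}}|$ but does not give $h_2(F_1)=h_2(F)$; the explicit unit computation is exactly the missing content, and you have spent the hypothesis that supplies it on a claim ($\mathrm{rank}_4 A(F)=1$) that it does not prove.

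For the group identification your plan diverges from the paper and breaks down at the point you yourself flag. The paper first proves $G$ is metacyclic-nonmodular via Lemma \ref{metacyclic-nonmodular}: one must show that \emph{all three} unramified quadratic extensions $K_n,K'_n,K''_n$ of $F_n$ have $2$-class group of rank $2$ (ambiguous class number formula for $K'$, $K''$; \cite{ChemseddinGreenbergConjectureI} for $K$) — this step, which also rules out the modular and abelian cases, is absent from your proposal, and your claim that $|G'|=2$ forces the metacyclic Type 1/Type 2 alternative is not sound without it (indeed Type 2 with $\alpha=2$ is a modular group, so your dihedral-versus-dicyclic dichotomy is the wrong one here). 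Then, instead of any Scholz-reciprocity or Artin-symbol bookkeeping, the paper settles the ``if and only if'' by pure class-number comparison: Corollary \ref{etacorolaK} gives $h_2(K_n)=\frac{1}{2}h_2(rs)h_2(\eta qrs)$ (via the heavy unit computation $q(K_1)=2^4$ of Lemma \ref{lemmunitstriquad}), and Ku\v{c}era's parity theorem says that under $N(\varepsilon_{rs})=1$ either $\left(\frac{r}{s}\right)_4\not=\left(\frac{s}{r}\right)_4$ and $h_2(rs)=2$, or both symbols equal $1$ and $4\mid h_2(rs)$. In the first case $h_2(K_n)=h_2(F_n)$, and reading the abelianizations $H_{i2}^{ab}$ in Table \ref{tab1} pins down Type 1 with $\alpha=2$ (every $\alpha\geq3$ type makes $|A(K_n)|>|A(F_n)|$); in the second case $h_2(K_n)>h_2(F_n)$ excludes it. Your proposed quartic-reciprocity computation is precisely the crux of the theorem, and asserting that it ``identifies this condition precisely'' with the symbol equality, without the auxiliary fields, the unit indices, or Ku\v{c}era's input, is a genuine gap rather than a proof.
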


	The  stability of the $2$-rank of the class group of intermediate fields of the   cyclotomic $\ZZ_2$-extensions of quadratic and biquadratic fields 
	has been extensively investigated, especially in  \cite{ChemseddinGreenbergConjectureI,LaxmiSaikia}. However, the study of the stability of the $4$-rank of the class group in these extensions is very challenging.
	Our investigations expose techniques  that can be used to explore the stability of the $4$-rank of the class group in the   cyclotomic $\ZZ_2$-extensions.  
	In fact,   we have the following corollary that gives the construction of families of real  biquadratic fields satisfying Greenberg's conjecture and whose $2$-Iwasawa module is of $4$-rank $1$ and $2$.

	\begin{maincorollary}[Corollary \ref{corol2}]
		Let $ q\equiv     3\pmod 4$ and  $r\equiv s\equiv    5\pmod 8$   be  distinct   prime numbers such that $\left(\frac{q}{r}\right)=\left(\frac{q}{s}\right)= (-1)^{\delta_{\eta,2}}$,  
	$\left(\frac{r}{s}\right)=1$          and  $N(\varepsilon_{rs})=1$. Put $\varepsilon_{\eta qrs}=\gamma+\gamma'\sqrt{\eta qrs}$.	Assuming   that $2^{\delta_{\eta,1}}q(\gamma-1)$    is not a square  in $\NN$, we get the following items.
	\begin{enumerate}[$1)$]
		\item Let  $K'=\QQ(\sqrt{r},\sqrt{\eta qs})$, we have:
		$$ A(K'_\infty)\simeq A (K')\simeq \ZZ/2\ZZ\times \ZZ/2^m\ZZ.$$
		
		\item Let   $K =\QQ(\sqrt{\eta q},\sqrt{rs})$ and assume furthermore that $\left(\frac{r}{s}\right)_4\not=\left(\frac{s}{r}\right)_4$. We have:
		$$	A(K_\infty)\simeq A (K)\simeq {\ZZ/4\ZZ\times \ZZ/2^{m-1}\ZZ}.$$
	\end{enumerate}
	Here  $m$  is the integer such that $h_2(\eta qrs)=2^{m+1}$.    
	\end{maincorollary}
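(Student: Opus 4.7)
\smallskip

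The plan is to derive the structure of the $2$-class groups of the biquadratic fields $K'$ and $K$ from the structure of $A(F)$ supplied by Theorem \ref{etatheoremmetacyclic-nonmodular}, together with classical tools of multiquadratic class field theory (Kuroda's class number formula, genus theory over $\QQ$, and Chevalley's ambiguous class number formula). The key observation is that $F=\QQ(\sqrt{\eta qrs})$ is one of the three quadratic subfields of both $K'$ and $K$, so each biquadratic field sits as a quadratic extension of $F$ and can be analyzed by the interplay of capitulation from $F$ with ambiguous classes coming from the other two quadratic subfields.

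For item $(1)$, I would first determine $|A(K')|$ via Kuroda's formula applied to $K'/\QQ$ with subfields $\QQ(\sqrt{r})$, $\QQ(\sqrt{\eta qs})$ and $F$. Since $r\equiv 5\pmod 8$, the field $\QQ(\sqrt r)$ has trivial $2$-class group; the value of $h_2(\QQ(\sqrt{\eta qs}))$ and the unit index $[E_{K'}:E_{\QQ(\sqrt{r})}E_{\QQ(\sqrt{\eta qs})}E_{F}]$ are pinned down by the hypotheses, and combined with $h_2(F)=2^{m+1}$ from the theorem this yields $h_2(K')=2^{m+1}$. Genus theory over $\QQ$ gives $\textrm{rank}_2 A(K')=2$, and a direct check shows $\textrm{rank}_4 A(K')=1$, so the only possibility compatible with the order is $A(K')\simeq\ZZ/2\ZZ\times\ZZ/2^m\ZZ$. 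The stability $A_\infty(K')\simeq A(K')$ then follows from a Fukuda-type argument: using that the unique prime of $F$ above $2$ is totally ramified in $F_\infty/F$ and that $A(F_\infty)\simeq A(F)$, one verifies via the norm-compatible diagram of class groups along the tower $K'_s=K'F_s$ that $|A(K'_{s+1})|=|A(K'_s)|$ for all $s\geq 0$, whence Iwasawa's theorem forces $A(K'_\infty)\simeq A(K')$.

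For item $(2)$, the same Kuroda--genus-theory bookkeeping gives $h_2(K)=2^{m+1}$ and $\textrm{rank}_2 A(K)=2$, independently of any extra hypothesis. The role of the condition $\left(\frac{r}{s}\right)_4\not=\left(\frac{s}{r}\right)_4$ is to raise the $4$-rank from $1$ to $2$: it is exactly the R\'edei-type criterion deciding whether the nontrivial ambiguous class of $K/\QQ(\sqrt{rs})$ becomes a square in $A(K)$. When the two quartic symbols differ, one produces an element of order $4$ outside the capitulated cyclic $\ZZ/2^{m-1}\ZZ$-part inherited from $A(F)$, forcing $A(K)\simeq\ZZ/4\ZZ\times\ZZ/2^{m-1}\ZZ$. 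Iwasawa stability $A_\infty(K)\simeq A(K)$ is obtained by the same Fukuda argument as for $K'$, now invoking that $K$ also contains $F$ and that stability is already established at $F$.

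The main obstacle lies in item $(2)$: translating the biquadratic residue hypothesis into the precise $4$-rank statement requires tracking the interaction between the ambiguous classes of $K/\QQ(\sqrt{rs})$, the capitulation kernel of $A(F)\to A(K)$, and the cyclic $\ZZ/2^m\ZZ$-component of $A(F)$ delivered by Theorem \ref{etatheoremmetacyclic-nonmodular}. Expressing the $4$-rank through a R\'edei matrix and matching its corank against the quartic-residue datum is the one nontrivial computation; once this is in place, the two claimed isomorphisms and their Iwasawa-theoretic counterparts follow uniformly.
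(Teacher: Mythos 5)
Your proposal diverges from the paper's actual argument, and the two steps on which it leans hardest are precisely the ones left as sketches. The paper's proof of this corollary is almost entirely group-theoretic: using the ambiguous class number formula and Lemma \ref{metacyclic-nonmodular} it shows that $\mathrm{Gal}(\mathcal{L}(F_n)/F_n)$ is metacyclic-nonmodular at \emph{every} layer $n$, and then reads $A(K'_n)$ and $A(K_n)$ off Table \ref{tab1} as the abelianizations of the index-$2$ subgroups $H_{1,2}$, $H_{2,2}$ (always $(2,2^m)$, for every metacyclic-nonmodular type --- which is exactly why item $1)$ needs no quartic-residue hypothesis) and $H_{3,2}$. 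In particular, the condition $\left(\frac{r}{s}\right)_4\not=\left(\frac{s}{r}\right)_4$ does \emph{not} act in the paper through a R\'edei-matrix computation at $K$: by Ku\v{c}era's parity theorem it forces $h_2(rs)=2$, hence by Corollary \ref{etacorolaK} the order match $h_2(K_n)=2^{m+1}=h_2(F_n)$, and it is this order match that pins the group down as Type 1 with $\alpha=2$, whence $A(K_n)\simeq H_{3,2}^{ab}\simeq(4,2^{m-1})$ falls out of the table. Your plan to ``raise the $4$-rank from $1$ to $2$'' via an ambiguous class of $K/\QQ(\sqrt{rs})$ becoming a square is the step you yourself flag as the one nontrivial computation, and it is not carried out; as framed it is also inaccurate (for $m=2$ the group $\ZZ/4\ZZ\times\ZZ/2\ZZ$ has $4$-rank $1$), and a $4$-rank/corank criterion alone cannot separate $(4,2^{m-1})$ from, say, $(8,2^{m-2})$ once $m$ is large --- knowing $|A(K)|=2^{m+1}$ and $2$-rank $2$ leaves every splitting $(2^a,2^b)$ open, and the exact structure is what the subgroup table supplies and your method does not.

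The second genuine gap is stability. You assert $|A(K'_{s+1})|=|A(K'_s)|$ ``via the norm-compatible diagram'' from $A(F_\infty)\simeq A(F)$, but stability over $F$ does not formally transfer to its unramified quadratic extensions: Fukuda's lemma (Lemma \ref{lm fukuda}) requires an actual equality of $2$-class numbers at consecutive layers, and in the paper this is earned either by the layer-uniform group-theoretic argument (the same Table \ref{tab1} conclusion holds for each $n$, then one takes the inverse limit) or, for $K$, by the class number formula at the degree-$16$ field $K_1$, whose unit index $q(K_1)=2^4$ is the bulk of the technical work (Lemma \ref{lemmunitstriquad}). Relatedly, the hypothesis $2^{\delta_{\eta,1}}q(\gamma-1)\notin\NN^2$ never visibly enters your argument, yet it is essential: it is what controls $q(F_1)$ and $q(K_1)$ (Corollary \ref{unitqrs}, Lemma \ref{lemmunitstriquad}) and hence both the claimed orders and the Fukuda step. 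Likewise your item-$1)$ claims that $h_2(\QQ(\sqrt{\eta qs}))$ and the Kuroda unit index of $K'$ are ``pinned down by the hypotheses,'' and that ``a direct check shows'' the $4$-rank of $A(K')$ is $1$, are asserted rather than proved --- the paper never needs these quantities, precisely because the item-$1)$ structure is type-independent in Table \ref{tab1}. So while a direct Kuroda/R\'edei route is not implausible in outline, as written the proposal has unfilled gaps at both load-bearing points: the exact group structure beyond order and $2$-rank, and the passage to $A_\infty$.
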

	
	\bigskip

	For  odd prime numbers $q$, $r$ and $s$,   the investigation of the $2$-Iwasawa module (resp. the $2$-class group) of the real triquadratic fields $\QQ(\sqrt{\eta q }, \sqrt {r},  \sqrt{s})$ represents a very significant challenge and all that is known about the $2$-Iwasawa module (resp. the $2$-class group) of these fields appears in  \cite[p. 1206, Corollary]{Ajn}, where the   author gave    some families of these fields, with $\eta=1$,  whose $2$-Iwasawa modules (resp. the $2$-class groups) are trivial, and in \cite[p. 2]{chems24}, where the   author constructed a family of these fields, with $\eta=1$, whose $2$-Iwasawa modules (resp. the $2$-class groups) are isomorphic to 
	$\ZZ/2\ZZ$. 
	
	Here  we construct a family of real    triquadratic fields $\mathbb{F}=	\QQ(\sqrt{\eta q},\sqrt{r},\sqrt{s})$, where $q$, $r$ and $s$ are all odd prime numbers,  
	that satisfy Greenberg's conjecture and both $A(\mathbb{F})$ and $ A(\mathbb{F}_\infty)$ are isomorphic to $\ZZ/2\ZZ\times \ZZ/2^{k}\ZZ$, where $k\geq 1$ is an integer.

	\begin{maintheorem}[Theorem \ref{corllarealtri(2 2)}]
			Let $ q\equiv     3\pmod 4$ and  $r\equiv s\equiv    5\pmod 8$   be  distinct   prime numbers such that $\left(\frac{q}{r}\right)=\left(\frac{q}{s}\right)= (-1)^{\delta_{\eta,2}}$, 
		$\left(\frac{r}{s}\right)=1$ and 	$\left(\frac{r}{s}\right)_4\not=\left(\frac{s}{r}\right)_4$. 
		Assume   that $2^{\delta_{\eta,1}}q(\gamma-1)$    is not a square  in $\NN$ and let $m$ be the integer such that $h_2(\eta qrs)=2^{m+1}$.
		Then, for $\mathbb{F}=\QQ(\sqrt{\eta q},\sqrt{r},\sqrt{s})$, we have:
		$$  A(\mathbb{F})\simeq A(\mathbb{F}_\infty)\simeq \ZZ/2\ZZ\times \ZZ/2^{m-1}\ZZ.$$
		Moreover, $\mathrm{Gal}(\mathcal{L}(\mathbb{F}_\infty)/\mathbb{F}_\infty)$, the Galois group of the maximal unramified pro-$2$-extension $\mathcal{L}(\mathbb{F}_\infty)$ over $\mathbb{F}_\infty$, is abelian.
	\end{maintheorem}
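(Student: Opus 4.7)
The plan is to derive everything from the metacyclic-nonmodular structure of $G := \mathrm{Gal}(\mathcal{L}(F_\infty)/F_\infty)$ given by Theorem~\ref{etatheoremmetacyclic-nonmodular}, by identifying $\mathbb{F}_\infty$ as a canonical unramified subextension of $\mathcal{L}(F_\infty)$.

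First I would verify that $\mathbb{F}/F$, and hence $\mathbb{F}_\infty/F_\infty$, is an unramified $(\ZZ/2\ZZ)^{2}$-extension. Each odd prime $p\in\{q,r,s\}$ divides exactly four of the seven quadratic subfields of $\mathbb{F}/\QQ$, so its ramification index in $\mathbb{F}/\QQ$ equals $2$, which coincides with its index in $F/\QQ$; the behavior at $2$ (in both cases $\eta=1,2$) is handled analogously via the congruences on $q,r,s$, and total reality rules out ramification at infinity. Since $A(F_\infty)\simeq\ZZ/2\ZZ\times\ZZ/2^{m}\ZZ$ has $2$-rank exactly $2$, there is a \emph{unique} unramified elementary abelian $2$-extension of $F_\infty$ of degree $4$, and $\mathbb{F}_\infty$ is forced to coincide with it. In group-theoretic terms, with $G=\langle a,b\mid a^{4}=1,\ b^{2^{m}}=1,\ b^{-1}ab=a^{-1}\rangle$, the open subgroup of $G$ fixing $\mathbb{F}_\infty$ is the unique one whose quotient is elementary abelian of rank $2$, namely $G^{2}G'$.

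Next I would compute $G^{2}G'$ explicitly. From $b^{-1}ab=a^{-1}$ together with $a^{4}=1$ one checks that both $a^{2}$ and $b^{2}$ are central: $b a^{2} b^{-1}=(bab^{-1})^{2}=a^{-2}=a^{2}$, and $b^{2}ab^{-2}=b(bab^{-1})b^{-1}=ba^{-1}b^{-1}=a$. Hence $G^{2}=\langle a^{2},b^{2}\rangle$ is abelian, and since $G'=\langle a^{2}\rangle\subseteq G^{2}$ we get $G^{2}G'=G^{2}\simeq\ZZ/2\ZZ\times\ZZ/2^{m-1}\ZZ$. Because this subgroup is abelian, $\mathcal{L}(F_\infty)/\mathbb{F}_\infty$ already realizes the maximal unramified abelian pro-$2$-extension of $\mathbb{F}_\infty$; conversely, any unramified pro-$2$-extension of $\mathbb{F}_\infty$ is automatically unramified over $F_\infty$, so $\mathcal{L}(\mathbb{F}_\infty)\subseteq\mathcal{L}(F_\infty)$ and equality holds. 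This shows that $\mathrm{Gal}(\mathcal{L}(\mathbb{F}_\infty)/\mathbb{F}_\infty)$ is abelian and
$$A(\mathbb{F}_\infty)\simeq G^{2}\simeq\ZZ/2\ZZ\times\ZZ/2^{m-1}\ZZ.$$

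To conclude $A(\mathbb{F})\simeq A(\mathbb{F}_\infty)$ I would propagate the stability $A(F)\simeq A(F_\infty)$ from Theorem~\ref{etatheoremmetacyclic-nonmodular} down to the finite level: the same arguments used there should yield $\mathrm{Gal}(\mathcal{L}(F)/F)\simeq G$, after which the two steps above apply verbatim at the base. The main obstacle is precisely this descent: one has to check that no non-trivial capitulation occurs in any quadratic step $\mathbb{F}_{n}\subset\mathbb{F}_{n+1}$, equivalently that the natural norm projection $A(\mathbb{F}_\infty)\twoheadrightarrow A(\mathbb{F})$ is an isomorphism. I would handle this via a Chevalley-type ambiguous class number formula applied at each layer, combined with the upper bound $|A(\mathbb{F})|\le|A(\mathbb{F}_\infty)|=2^{m}$ and the genus-theoretic lower bound coming from the multiquadratic structure of $\mathbb{F}/\QQ$.
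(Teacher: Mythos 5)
Your proposal is correct and, at its core, coincides with the paper's proof: both identify $\mathbb{F}_\infty$ (resp.\ $\mathbb{F}_n$) as the fixed field of the Frattini subgroup of the metacyclic-nonmodular group $G$ of Type $1$ with $\alpha=2$ supplied by Theorem \ref{etatheoremmetacyclic-nonmodular}, and then read off that this subgroup is abelian of type $(2,2^{m-1})$. The differences are minor but worth noting. Where the paper simply cites the entry for Type $1$, $\alpha=2$, $i=3$ in Table \ref{tab2} (namely $H_{34}=\langle a^2,b^2\rangle$, abelian, of type $(2,2^{n-1})$), you verify it by hand via the centrality of $a^2$ and $b^2$ and $G'=\langle a^2\rangle$ --- a welcome self-contained check of the table. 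The real divergence is in the ordering: you work at the infinite level first and then present the descent $A(\mathbb{F})\simeq A(\mathbb{F}_\infty)$ as ``the main obstacle,'' proposing layerwise capitulation checks and a Chevalley-type formula. That machinery is unnecessary, and your own parenthetical fallback is exactly the paper's route: the proof of Theorem \ref{etatheoremmetacyclic-nonmodular} is carried out uniformly at \emph{every} finite layer $n\geq 0$, establishing that $\mathrm{Gal}(\mathcal{L}(F_n)/F_n)$ is of Type $1$ with $\alpha=2$ for all $n$, including $n=0$; so your Frattini computation applies verbatim to each $F_n$, giving $A(\mathbb{F}_n)\simeq \ZZ/2\ZZ\times\ZZ/2^{m-1}\ZZ$ for all $n\geq 0$ at once, with $n=0$ yielding $A(\mathbb{F})$ and the inverse limit yielding $A(\mathbb{F}_\infty)$. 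One small correction in your speculative final step: absence of capitulation concerns the lifting maps $A(\mathbb{F}_n)\to A(\mathbb{F}_{n+1})$ and is \emph{not} equivalent to the norm projection $A(\mathbb{F}_\infty)\twoheadrightarrow A(\mathbb{F})$ being an isomorphism; the latter is what is needed, and it follows from the constancy of the orders $|A(\mathbb{F}_n)|$ together with the standard surjectivity of norms in a $\ZZ_2$-extension (Fukuda-type stability), with no capitulation analysis. Finally, like the paper's own proof, you implicitly import the hypotheses of Theorem \ref{etatheoremmetacyclic-nonmodular}; note that $\left(\frac{r}{s}\right)_4\neq\left(\frac{s}{r}\right)_4$ does force $N(\varepsilon_{rs})=1$ by Ku\v{c}era's parity theorem, so that assumption is available here.
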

	
	\bigskip
	The plan of this paper is the following. In Section \ref{sec2}, we adopt some notations and collect some materials from groups and class field theories that we will use later.
	In Section \ref{sec3}, we present our main results and their proofs.

	\section{\bf Prerequisites from groups  and class field     theories}\label{sec2}	
	Before starting our investigations, we introduce additional notation   and to  collect some prerequisites from groups  and class field     theories.	Let $n\geq 2$ be a natural number and $G$ be a finite metabelian $2$-group of rank $2$ such that $G^{ab}:=G/G'  \simeq\ZZ/2 \ZZ\times\ZZ/2^n \ZZ$. Thus, it is known that $G$ is generated by two elements $a$ and $b$ and 
	admits a representation by generators and relations as follows:
	$$G=\langle  a, b\ | \ a^2\equiv b^{2^n} \equiv 1 \text{ mod } G'\rangle   .$$
	Moreover, $G$ admits three maximal subgroups of index $2$ denoted $H_{i2}$, for $i=1$, $2$ or $3$, and three normal subgroups of index $4$ denoted 
	$H_{i4}$, for $i=1$, $2$ or $3$, such  that $H_{34}=H_{12}\cap H_{22}\cap  H_{32}$, i.e. the intersection of all maximal subgroups of $G$. Notice that
	$H_{34}$ is the Frattini subgroup of $G$.
	Note that the groups  $H_{12}$, $H_{22}$ and
	$H_{3 2}$ are presented as follows:
	
	$$H_{12}=\langle b, G'\rangle , \quad  H_{22}=\langle ab, G'\rangle  \text{ and }
	H_{3 2}=\langle a, b^2, G'\rangle,$$\label{subgroups}
	and the groups   $H_{1 4}$,
	$H_{2 4}$ and $H_{34}$ are presented as follows:
	$$H_{1 4}=\langle a,b^4, G'\rangle, \quad H_{2 4}=\langle ab^2, G'\rangle \text{ and } H_{3 4}=\langle b^2, G'\rangle.$$

	Whenever $G$ a metacyclic-nonmodular $2$-groups, the authors of \cite{aaboune} gave explicitly the   subgroups of index $2$ or $4$, and their abelianizations as presented in Table \ref{tab1} and Table \ref{tab2}.	
	
	\begin{table}[H]
		\centering
		\renewcommand{\arraystretch}{1.2}
		\begin{tabular}{|c|c|c|c|c|c|}
			\hline
			$i$ & Structure of $G$ & $H_{i2}$ & $H'_{i2}$ & $H^{ab}_{i2}$ & $|G'|$ \\
			\hline
			1 & \multirow{3}{*}{Type 1 and $\alpha = 2$} 
			& $\langle a^2, b \rangle$ & $\langle 1 \rangle$ & $(2, 2^n)$ & \multirow{3}{*}{2} \\
			\cline{1-1} \cline{3-5}
			2 &  & $\langle a^2, ab \rangle$ & $\langle 1 \rangle$ & $(2, 2^n)$ & \\
			\cline{1-1} \cline{3-5} 
			3 &  & $\langle a, b^2 \rangle$ & $\langle 1 \rangle$ & $(4, 2^{n-1})$ & \\
			\hline 
			\hline
			1 & \multirow{2}{*}{Type 1, 2, 3 or 4 and $\alpha \geq 3$} 
			& $\langle a^2, b \rangle$ & $\langle a^4 \rangle$ & $(2, 2^n)$ & \multirow{2}{*}{$\geq 4$} \\
			\cline{1-1} \cline{3-5}
			2 &  & $\langle a^2, ab \rangle$ & $\langle a^4 \rangle$ & $(2, 2^n)$ &  \\
			\hline
			3 & \begin{tabular}[c]{@{}c@{}}Type 1, 3 and $s = \alpha - 1$, $\alpha \geq 3$\\ 
				Type 2, 4 and $s = \alpha -1$, $\alpha \geq 3$\\ 
				Type 3, 4 and $s < \alpha - 1$, $\alpha \geq 4$ \end{tabular}
			& $\langle a, b^2 \rangle$ 
			& \begin{tabular}[c]{@{}c@{}}$\langle 1 \rangle$\\$\langle 1 \rangle$\\  $\langle a^{2^{s+1}} \rangle$ \end{tabular} 
			& \begin{tabular}[c]{@{}c@{}}$(2^\alpha, 2^{n-1})$\\ $(2^{\alpha - \varepsilon}, 2^{n - \varepsilon'})$\\ $(2^{s+1}, 2^{n-1})$ \end{tabular}
			& $\geq 4$ \\
			\hline
		\end{tabular}
	\vspace{1em}
		\caption{Subgroups of index $2$ in $G$}\label{tab1}
	\end{table}
		Here \quad
	$
	\left\{
	\begin{array}{ll}
		\varepsilon = 0, \varepsilon' = 1 & \text{if } n \leq \alpha, \\
		\varepsilon = 1, \varepsilon' = 0 & \text{if } n > \alpha.
	\end{array}
	\right.$
	
	\begin{table}[H]\label{tab2}
		\centering
		\renewcommand{\arraystretch}{1.2}
		\setlength{\tabcolsep}{2.3pt}
		\begin{tabular}{|c|c|c|c|c|c|}
			\hline
			$i$ & Structure of $G$ & $H_{i4}$ & $H'_{i4}$ & $H^{ab}_{i4}$ & $|G'|$ \\
			\hline
			1 & \multirow{3}{*}{Type 1 and $\alpha = 2$} & $\langle a, b^4 \rangle$ & $\langle 1 \rangle$ & $(4, 2^{n-2})$ & \multirow{3}{*}{2} \\
			\cline{1-1} \cline{3-5}
			2 & & $\langle a^2, ab^2 \rangle$ & $\langle 1 \rangle$ & 
			$\begin{cases}
				(4), & \text{if } n=2 \\
				(2,2^{n-1}), & \text{if } n \geq 3
			\end{cases}$ & \\
			\cline{1-1} \cline{3-5}
			3 & & $\langle a^2, b^2 \rangle$ & $\langle 1 \rangle$ & $(2,2^{n-1})$ & \\
			\hline
		\end{tabular}
		
		\vspace{1em}

		\caption{Subgroups of index $4$ in $G$}
	\end{table}

	
	

	\bigskip
	
	Now let us convert the above group theory in terms of field extensions. Let $k$ be  a number field and consider
	$$k=k^{(0)} \subset k^{(1)} \subset  k^{(2)}  \subset \cdots\subset k^{(i)} \cdots \subset  \bigcup_{i\geq 0} k^{(i)}=\mathcal{L}(k)$$
	the      $2$-class field tower of $k$.   The field $\mathcal{L}(k)$ is called the maximal unramified pro-$2$-extension  of $k$ and for $k_\infty $, the cyclotomic $\ZZ_2$-extension of $k$, the group $G_{k_\infty}=\mathrm{Gal}(\mathcal{L}(k_\infty)/k_\infty)$ is isomorphic to the inverse limit $\varprojlim \mathrm{Gal}(\mathcal{L}(k_n)/k_n)$ with respect to the restriction map.
	\medskip

	Assume that $A(k)\simeq\ZZ/2 \ZZ\times\ZZ/2^n \ZZ$, where $n\geq2$ is a natural number.
	Put $G_k=\mathrm{Gal}(\mathcal{L}(k)/k)$. So by class field theory, we have 
	$G_k/G_k'\simeq\ZZ/2 \ZZ\times\ZZ/2^n \ZZ$ and  $G_k=\langle a,
	b\rangle$ such that $a^2\equiv b^{2^n}\equiv 1\mod G_k'$ and $A(k)=\langle \mathfrak{c}, \mathfrak{d}\rangle\simeq \langle aG_k',
	bG'\rangle$ where $(\mathfrak{c}, k^{(1)}/k)=aG_k'$
	and $(\mathfrak{d}, k^{(1)}/k)=bG_k'$ with $(\ .\ ,
	k^{(1)}/k)$ is the Artin symbol in the extension
	$k^{(1)}/k$.  	Moreover,   the group $G_k$ admits three normal subgroups of index $2$ (resp. $4$) denoted  $H_{i  2}$ (resp. $H_{i  4}$),  for $i\in \{1, 2,  3\}$  (cf. Page \pageref{subgroups}).
	
	Each subgroup $H_{ij}$ of $A(k)$
	corresponds to 	an unramified extension    ${F}_{i,j}$ of $k$ contained in
	$k^{(1)}$ such that $A(k)/H_{ij}\simeq
	\operatorname{Gal}(F_{i,j}/k)$ and
	$H_{ij}=\mathcal{N}_{{F}_{i,j}/k}(A(F_{i,j}) )$. Moreover, we can schematize the Hilbert $2$-class field tower of $k$ as in  Figure \ref{Fig2} (see below):
	\begin{figure}[H]
		$$
		\xymatrix{
			& k^{(2)} \ar@{<-}[d] & \\
			& k^{(1)}\ar@{<-}[ld]\ar@{<-}[d]\ar@{<-}[rd]\\
			{F_{1,  4}}\ar@{<-}[rd]&\ar@{<-}[ld] {F_{3,  4}}\ar@{<-}[d]\ar@{<-}[rd]  & {F_{2,  4}}\ar@{<-}[ld]\\
			{F_{1,  2}}\ar@{<-}[rd]& {F_{3,  2}} \ar@{<-}[d]  & {F_{2,  2}}\ar@{<-}[ld]\\
			&k
		}
		$$
		\caption{\label{Fig2}}
	\end{figure}
	
For additional details, we recommend that the reader consult \cite{aaboune,benjashnepreprint1993}.	
	Recall that a finite non-abelian group $M$ {\bf is said to be minimal} if all its proper subgroups are abelian. With the above hypothesis and notations we have:
	
	\begin{theorem}[\cite{aaboune}, Theorem 4.2]\label{AabounePrzekhiniNonmodular}
		Assume that the $2$-class group of ${F_{3,  2}}$ is isomorphic to  $\ZZ/2 \ZZ\times\ZZ/4 \ZZ$. Then the following assertions are equivalent :
		\begin{enumerate}[\rm $1)$]
			\item $G_k$ is minimal metacyclic-nonmodular $2$-group of type 1 and of order $16$,

			\item The $2$-class group of  ${F_{i, 2}}$ is isomorphic to  $\ZZ/2 \ZZ\times\ZZ/4 \ZZ$, with $i=1$ or $2$,
			\item The rank of the $2$-class group of ${F_{i,  2}}$ equals $2$ with $i=1$ or $2$.
		\end{enumerate}
	\end{theorem}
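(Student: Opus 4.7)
The plan is to prove the cycle $(1) \Rightarrow (2) \Rightarrow (3) \Rightarrow (1)$. The entire argument rests on one arithmetic input: since each $K_{i,j}/k$ is unramified, the maximal unramified pro-$2$-extensions of $k$ and of $K_{i,j}$ coincide, so $\mathcal{L}(k) = \mathcal{L}(K_{i,j})$ and Artin reciprocity yields
$$A(K_{i,j}) \simeq H_{i,j}/[H_{i,j},H_{i,j}] = H_{i,j}^{ab}.$$
This reduces the theorem to a purely group-theoretic statement about $G_k$ and its index-$2$ subgroups, and Tables \ref{tab1} and \ref{tab2} become the working tools.

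For $(1) \Rightarrow (2)$, Type 1 imposes $\alpha = 2$, so $G_k' = \langle a^2 \rangle$ has order $2$; the order condition $|G_k| = |G_k'| \cdot |G_k^{ab}| = 2 \cdot 2^{n+1} = 16$ then forces $n = 2$, and the first two rows of Table \ref{tab1} read off $H_{1,2}^{ab} \simeq H_{2,2}^{ab} \simeq \ZZ/2\ZZ \times \ZZ/4\ZZ$. The implication $(2) \Rightarrow (3)$ is tautological.

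The substance lies in $(3) \Rightarrow (1)$. Here I would run through the Benjamin--Snyder classification of finite metabelian rank-$2$ $2$-groups whose abelianization is $\ZZ/2\ZZ \times \ZZ/2^n\ZZ$ and eliminate every case but Type 1 with $\alpha = n = 2$, using the two hypotheses on $A(K_{3,2})$ and on the rank of $A(K_{1,2})$ or $A(K_{2,2})$. The \emph{abelian case} is ruled out because $H_{1,2} = \langle b \rangle$ and $H_{2,2} = \langle ab \rangle$ would both be cyclic, violating the rank-$2$ hypothesis. \emph{Type 1 with $\alpha \geq 3$} is ruled out because Table \ref{tab1} gives $H_{3,2}^{ab} \simeq \ZZ/2^\alpha\ZZ \times \ZZ/2^{n-1}\ZZ$, which cannot coincide with $(2,4)$. \emph{Types 2, 3, 4} and the \emph{modular} and \emph{nonmetacyclic} cases are excluded by matching the abelianizations of $H_{1,2}, H_{2,2}, H_{3,2}$ from Tables \ref{tab1}--\ref{tab2} (and the corresponding Benjamin--Snyder data) against $(2,4)$ together with the rank constraint. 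This pins $G_k$ down to Type 1 with $\alpha = 2$, and then $H_{3,2}^{ab} \simeq (4, 2^{n-1})$ combined with $A(K_{3,2}) \simeq (2,4)$ forces $n = 2$, so $|G_k| = 16$; minimality follows at once since $|G_k'| = 2$ makes every proper subgroup abelian.

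The main obstacle will be the case-by-case elimination in $(3) \Rightarrow (1)$, particularly for the modular and nonmetacyclic cases -- whose index-$2$ subgroup abelianizations are not included in the excerpt's tables -- and for Types 2--4 where the parameters $(\alpha, s, n)$ interact nontrivially through the $\varepsilon, \delta, \omega, \xi$ conventions at the end of Section \ref{sec2}. A clean write-up would most likely invoke the complete classification of \cite{benjashnepreprint1993} and the exhaustive index-$2$ subgroup data from \cite{aaboune} to reduce the final step to a finite enumeration.
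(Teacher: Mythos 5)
First, a point of comparison you could not have known: the paper contains no proof of this statement at all. It is imported verbatim from \cite{aaboune} (Theorem 4.2), so there is no internal argument to match yours against; your attempt can only be judged as a reconstruction of the argument of that reference, using the apparatus the paper does quote from it (Tables \ref{tab1}--\ref{tab2} and Lemmas \ref{AabounePrzekhiniModularORabelian} and \ref{metacyclic-nonmodular}). Your overall frame is the right one: since each $K_{i,2}/k$ is unramified, $\mathcal{L}(K_{i,2})=\mathcal{L}(k)$ and $A(K_{i,2})\simeq H_{i,2}^{ab}$, which turns the theorem into a statement about $G_k$ and its maximal subgroups; your $(1)\Rightarrow(2)$ and $(2)\Rightarrow(3)$ are correct in substance.

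That said, there are concrete defects. First, ``Type 1 imposes $\alpha=2$'' is backwards: Type 1 allows any $\alpha\geq 2$, and it is either minimality (for $\alpha\geq 3$, Table \ref{tab1} gives $H_{12}'=\langle a^4\rangle\neq 1$, a nonabelian proper subgroup) or the order count $|G_k|=2^{\alpha+n}=16$ with $\alpha,n\geq 2$ that forces $\alpha=n=2$. Second, ``minimality follows at once since $|G_k'|=2$'' invokes a false general principle: $Q_8\times\ZZ/2\ZZ$ has derived subgroup of order $2$ and a nonabelian maximal subgroup; the correct justification is that Table \ref{tab1} lists $H_{i2}'=\langle 1\rangle$ for all three maximal subgroups in the Type 1, $\alpha=2$ case, and every proper subgroup lies in a maximal one. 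Third, and most importantly, the step you flag as the main obstacle in $(3)\Rightarrow(1)$ --- eliminating the abelian, modular and nonmetacyclic cases without their subgroup tables --- does not require the finite enumeration you propose: the paper's own quoted lemmas close it. The hypothesis $A(K_{3,2})\simeq\ZZ/2\ZZ\times\ZZ/4\ZZ$ gives $\rg(A(K_{3,2}))=2$; if one of $\rg(A(K_{1,2}))$, $\rg(A(K_{2,2}))$ were $1$, Lemma \ref{AabounePrzekhiniModularORabelian} would make $G_k$ modular or abelian, but in those groups $H_{12}$ and $H_{22}$ are \emph{both} cyclic, contradicting hypothesis $(3)$; hence the rank is $2$ for all $i\in\{1,2,3\}$ and Lemma \ref{metacyclic-nonmodular}-$2)$ yields metacyclic-nonmodular directly (this lemma also disposes of the nonmetacyclic case, which you left dangling). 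Then the $H_{32}^{ab}$ column of Table \ref{tab1} kills every row with $\alpha\geq 3$ (a component $2^\alpha$ or $2^{s+1}\geq 8$ is incompatible with $(2,4)$), leaving Type 1, $\alpha=2$, where $(4,2^{n-1})\simeq(2,4)$ forces $n=2$. Finally, you silently assume $G_k$ is a finite metabelian group before invoking the Benjamin--Snyder classification; a priori $G_k$ is only a pro-$2$ group, and finiteness must be extracted from the hypotheses (e.g., the infinite metacyclic candidate $\ZZ_2\rtimes\ZZ/2^n\ZZ$ has $H_{32}^{ab}$ infinite, contradicting $|A(K_{3,2})|=8$). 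With these repairs your outline becomes a complete proof, and it is almost certainly the intended one, but as written the hard direction is an announced plan with an acknowledged hole rather than an argument.
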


	\begin{lemma}[\cite{aaboune}, Corollary 4.9]\label{AabounePrzekhiniModularORabelian}
		Let $k$ be a number field whose $2$-class group is $A(k)\simeq\ZZ/2 \ZZ\times\ZZ/2^n \ZZ$, with $n\geq2$. Then $G_k$ is modular or abelian if and only if
		the  $\2r(A(F_{3,2}))=2$  and $\2r(A(F_{i,2}))=1$  for some $i\in\{1,2\}$.
	\end{lemma}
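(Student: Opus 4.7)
The plan is to carry out a case analysis on the four possible isomorphism classes of $G_k$ (abelian, modular, metacyclic-nonmodular, or nonmetacyclic), translating the hypothesis on $\mathrm{rank}(A(K_{i,2}))$ into a generator count for $H_{i,2}$ via class field theory. The key input is that $\mathcal{L}(K_{i,2})=\mathcal{L}(k)$, since any unramified pro-$2$-extension of $K_{i,2}$ is unramified over $k$ on its Galois closure over $k$; hence $\mathrm{Gal}(\mathcal{L}(K_{i,2})/K_{i,2})=H_{i,2}$ and $A(K_{i,2})\simeq H_{i,2}^{ab}$. Burnside's basis theorem then identifies $\mathrm{rank}\, A(K_{i,2})$ with the minimal number of generators $d(H_{i,2})$ of the $2$-group $H_{i,2}$; in particular the rank equals $1$ if and only if $H_{i,2}$ is cyclic.

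For the implication $(\Rightarrow)$ I would handle the abelian and modular cases by direct computation in the presentation. In the abelian case $G_k\simeq\ZZ/2\ZZ\times\ZZ/2^n\ZZ$ one reads off $H_{1,2}=\langle b\rangle$ and $H_{2,2}=\langle ab\rangle$, both cyclic of order $2^n$, and $H_{3,2}=\langle a\rangle\times\langle b^2\rangle\simeq\ZZ/2\ZZ\times\ZZ/2^{n-1}\ZZ$, yielding ranks $1,1,2$. In the modular case one works with $\langle a,b\mid a^2=b^{2^{n+1}}=1,\ [a,b]=b^{2^n}\rangle$, where $G_k'=\langle b^{2^n}\rangle\subseteq\langle b\rangle$; the relation $b^{-1}ab=a\,b^{2^n}$, iterated, shows that $b^2$ commutes with $a$ and that $ab$ has order $2^{n+1}$, so $H_{1,2}$ and $H_{2,2}$ are cyclic while $H_{3,2}=\langle a\rangle\times\langle b^2\rangle\simeq\ZZ/2\ZZ\times\ZZ/2^n\ZZ$ has rank $2$.

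For the converse, suppose that $\mathrm{rank}\, A(K_{i,2})=1$ for some $i\in\{1,2\}$. By the observation above, $H_{i,2}$ is cyclic, and since it is also normal of index $2$ in $G_k$ the group $G_k$ is metacyclic. Applying the four-class classification, $G_k$ is then abelian, modular, or metacyclic-nonmodular. Each metacyclic-nonmodular row of Table~\ref{tab1} records $H_{1,2}^{ab}$ and $H_{2,2}^{ab}$ as being of $2$-rank $2$, which contradicts the hypothesis; so $G_k$ must be abelian or modular. Note that the condition $\mathrm{rank}\, A(K_{3,2})=2$ is not strictly needed for this converse but is consistent with both remaining cases.

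The main obstacle I anticipate is the bookkeeping in the modular case: verifying that $a$ and $b^2$ genuinely commute (so that $H_{3,2}$ is a \emph{direct} product of rank $2$ rather than a larger cyclic group) and pinning down the exact order of $ab$. Both amount to a systematic use of $[a,b]=b^{2^n}$ together with the fact that $b^{2^n}$ is central of order $2$; the subtle point is that the interplay between $b^{2^n}$ and smaller powers of $b$ must be tracked carefully to conclude that the $2$-rank of $H_{3,2}^{ab}$ is exactly $2$ and not accidentally higher via the Frattini quotient.
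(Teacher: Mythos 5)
Your proposal is correct, but it takes a genuinely different route from the paper: the paper offers no argument at all for this lemma, importing it verbatim as Corollary~4.9 of \cite{aaboune}, whereas you give a direct, essentially self-contained proof. Your reduction (via $\mathcal{L}(K_{i,2})=\mathcal{L}(k)$, $A(K_{i,2})\simeq H_{i,2}^{ab}$, and Burnside's basis theorem) is the right dictionary, and your computations in the two presentations check out: in the modular group $\langle a,b\mid a^2=b^{2^{n+1}}=1,\ [a,b]=b^{2^n}\rangle$ one indeed has $[a,b^2]=1$, $(ab)^2=b^{2^n+2}$ which generates $\langle b^2\rangle$ since $2^{n-1}+1$ is odd, so $H_{1,2}$ and $H_{2,2}$ are cyclic of order $2^{n+1}$ while $H_{3,2}=\langle a\rangle\times\langle b^2\rangle$ has rank $2$ --- exactly the bookkeeping you flagged as the main obstacle. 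Your aside that $\mathrm{rank}\,A(k_{3,2})=2$ is not needed in the converse is also right, and for a structural reason worth recording: $H_{3,2}/G_k'\simeq\ZZ/2\ZZ\times\ZZ/2^{n-1}\ZZ$ is a quotient of $H_{3,2}^{ab}$, so that rank is automatically at least $2$ whenever $n\geq 2$. Two points deserve tightening. First, in the converse you pass from $\mathrm{rank}\,A(K_{i,2})=1$ to ``$H_{i,2}$ cyclic'' without addressing that $G_k$ is a priori an infinite pro-$2$ group: the pro-$2$ Burnside argument gives only that $H_{i,2}$ is procyclic, and you must then use the finiteness of its abelianization $A(K_{i,2})$ (a procyclic pro-$2$ group with finite abelianization is finite cyclic, ruling out $\ZZ_2$) to conclude $G_k$ is finite, which is what licenses applying the finite four-type classification at all. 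Second, your exclusion of the metacyclic-nonmodular case leans on Table~\ref{tab1}, i.e.\ on the very paper \cite{aaboune} being cited; a cleaner and more classical alternative is to invoke the standard classification of $2$-groups with a cyclic maximal subgroup (cyclic, dihedral, quaternion, semidihedral, modular, or $\ZZ/2\ZZ\times\ZZ/2^{m}\ZZ$): since dihedral, quaternion and semidihedral groups have abelianization $(2,2)$ while $n\geq 2$, only the abelian and modular cases survive, which makes your converse independent of the source being reproved.
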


	\begin{lemma}[]\label{metacyclic-nonmodular}
		Let $k$ be a number field whose $2$-class group is $A(k)\simeq\ZZ/2 \ZZ\times\ZZ/2^n \ZZ$, with $n\geq2$. We have:
		\begin{enumerate}[\rm $1)$]
			\item $\2r(A(F_{i,2}))\leq 2$ for all $i\in\{1,2\}$.
			
			\item   $G_k$ is metacyclic-nonmodular if and only if
			$\2r(A(F_{i,2}))=2$    for all $i\in\{1,2,3\}$.
		\end{enumerate}
	\end{lemma}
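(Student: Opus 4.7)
The plan is to translate the lemma into a statement about the abelianizations of the three index-$2$ subgroups of $G_k$ via the Artin isomorphism $A(k_{i,2})\simeq H_{i,2}^{ab}$, and then to exploit the Benjamin--Snyder dichotomy for $2$-generated $2$-groups with abelianization $\ZZ/2\ZZ\times\ZZ/2^n\ZZ$ (the classes being: abelian, modular, metacyclic-nonmodular, and nonmetacyclic).

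For part $(1)$, fix $i\in\{1,2\}$ and set $H=H_{i,2}$. Since $G_k'\subseteq H$ and $H/G_k'$ is the cyclic subgroup of index $2$ in $G_k^{ab}$ generated by the image of $b$ (for $i=1$) or of $ab$ (for $i=2$), one gets a short exact sequence
\[
1\longrightarrow G_k'/H'\longrightarrow H^{ab}\longrightarrow \ZZ/2^n\ZZ\longrightarrow 0.
\]
As the right-hand term is cyclic, it suffices to show that $G_k'/H'$ is itself cyclic. Because $H'$ contains both $[H,G_k']$ and $G_k''$, the group $G_k'/H'$ is a quotient of the $H/G_k'$-coinvariants of the metabelianization $G_k'/G_k''$. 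Since $G_k'/G_k''$ is cyclic as a $\ZZ[G_k^{ab}]$-module, generated by the class of the commutator $[a,b]$, its coinvariants form a cyclic module over $\ZZ[G_k^{ab}/(H/G_k')]\simeq \ZZ[\ZZ/2\ZZ]$, and inspecting the four classes of the Benjamin--Snyder dichotomy one checks in each case that this cyclic module is actually cyclic as an abelian group, yielding $\2r(H^{ab})\leq 2$.

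For part $(2)$, the forward implication is immediate from Table~\ref{tab1}: in each of the four subtypes of metacyclic-nonmodular groups, the entries $H_{i,2}^{ab}$ for $i=1,2,3$ are explicitly bi-cyclic groups of $2$-rank exactly $2$. For the converse, assume that $\2r(A(k_{i,2}))=2$ for every $i\in\{1,2,3\}$. By the contrapositive of Lemma~\ref{AabounePrzekhiniModularORabelian}, $G_k$ can be neither abelian nor modular. If $G_k$ were nonmetacyclic, then $G_k'/G_k''$ would fail to be cyclic as an abelian group, and a direct inspection of the standard presentations of such groups (cf.\ \cite{benjashnepreprint1993}) would force $\2r(A(k_{3,2}))\geq 3$, contradicting the hypothesis. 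Hence $G_k$ must be metacyclic-nonmodular.

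The main obstacle is the uniform verification in part $(1)$ that $G_k'/H_{i,2}'$ is cyclic across all four classes of the dichotomy: the naive Schreier estimate only gives $\2r(H^{ab})\leq 3$ for an index-$2$ subgroup of a $2$-generated pro-$2$ group, so sharpening it to $\leq 2$ for $i\in\{1,2\}$ requires exploiting both the cyclic $\ZZ[G_k^{ab}]$-module structure of $G_k'/G_k''$ and the specific cyclic shape of $H_{i,2}/G_k'$. The same module-theoretic ingredient is what enables the computation of $\2r(H_{3,2}^{ab})$ in the nonmetacyclic case required to close the converse in part $(2)$.
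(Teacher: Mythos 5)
Your part~(1) is nearly complete but its closing step fails as stated, and for the record the paper itself does not argue this lemma at all: it deduces both items in one line from \cite[Theorem 3.1]{AziziRezzouguiZekhniniPeriodica}, so the burden of your blind proof is to actually supply what that citation carries. The exact sequence $1\to G_k'/H'\to H^{ab}\to\ZZ/2^n\ZZ\to 0$ and the reduction to cyclicity of $G_k'/H'$ are correct, but ``inspecting the four classes of the Benjamin--Snyder dichotomy'' is not executable: the nonmetacyclic class is a catch-all for which \cite{benjashnepreprint1993} provides no presentations (their Proposition lists only the metacyclic-nonmodular groups), and that is precisely the class where cyclicity is in doubt. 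The inspection is also unnecessary, because there is a uniform one-line fix: since $G_k''\subseteq H'$ and $[G_k',H]\subseteq H'$, the group $G_k'/H'$ is abelian and generated by the images of the $G_k$-conjugates of $c=[a,b]$, conjugation by elements of $H$ being trivial modulo $H'$; the identity $[a^2,b]=c^a\,c$ together with $a^2\in G_k'\subseteq H$ and $b\in H_{1,2}$ gives $c^a\equiv c^{-1}\pmod{H'}$, so $G_k'/H_{1,2}'$ is cyclic with no case distinction (for $H_{2,2}$ replace the generating pair $(a,b)$ by $(a,ab)$, which leaves $[a,b]$ unchanged and exhibits $H_{2,2}=\langle ab,G_k'\rangle$ in the same shape).

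The genuine gap is in the converse of part~(2), and one of its two pillars is false. The assertion that nonmetacyclic forces $G_k'/G_k''$ to be noncyclic is refuted by $G=\langle a,b,z \mid a^2=b^{2^n}=z^2=1,\ [a,b]=z,\ z\ \text{central}\rangle$: here $G^{ab}\simeq\ZZ/2\ZZ\times\ZZ/2^n\ZZ$ and $G'=\langle z\rangle$ is cyclic of order $2$, yet $G$ is not metacyclic, since $a$ centralizes both $b^2$ and $z$, so $H_{3,2}=\langle a,b^2,z\rangle$ is abelian of rank $3$, whereas every subgroup of a metacyclic group is metacyclic and hence has abelianization of rank at most $2$. (This example is consistent with the lemma --- the third rank is $3$, not $2$ --- but the mechanism producing rank $\geq 3$ is not noncyclicity of $G'/G''$, so your route cannot detect it.) The second pillar, ``direct inspection of the standard presentations'' of nonmetacyclic groups, is impossible for the reason already noted: no such presentations exist in the cited source. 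The implication (all three ranks equal $2$) $\Rightarrow$ ($G_k$ metacyclic) is exactly the nontrivial content of \cite[Theorem 3.1]{AziziRezzouguiZekhniniPeriodica}, i.e., the very statement the paper's one-line proof invokes; your proposal asserts it rather than proves it. By contrast, the forward direction via Table~\ref{tab1} and the exclusion of the abelian and modular cases via the contrapositive of Lemma~\ref{AabounePrzekhiniModularORabelian} are both correct.
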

	\begin{proof}
		This is a direct deduction of \cite[Theorem 3.1]{AziziRezzouguiZekhniniPeriodica}. 
	\end{proof}

	\bigskip
	
	Before closing this section, let us recall some further useful results.
	The   following class number formula for   multiquadratic number fields  which is usually attributed to Kuroda \cite{Ku-50} or Wada \cite{Wa-66}, but it goes back to Herglotz \cite{He-22}.
	\begin{lemma}[\cite{Ku-50}]\label{wada's f.}
		Let $k$ be a multiquadratic number field of degree $2^n$, where  $n\geq 2$ is an integer,  and $k_i$ the $s=2^n-1$ quadratic subfields of $k$. Then $h(k)$, the class number of $k$, is given by the following formula:
		\begin{eqnarray}\label{wadaf}
			 h(k)=\frac{1}{2^v}q(k)\prod_{i=1}^{s}h(k_i),
		\end{eqnarray} 
		where  $ q(k)=[E_k: \prod_{i=1}^{s}E_{k_i}]$ and   $$     v=\left\{ \begin{array}{cl}
			n(2^{n-1}-1); &\text{ if } k \text{ is real, }\\
			(n-1)(2^{n-2}-1)+2^{n-1}-1 & \text{ if } k \text{ is imaginary.}
		\end{array}\right.$$
	\end{lemma}

	\medskip

Let $k$ be 	a   multiquadratic field, and let     $\tau_1$ and 
	$\tau_2$  be two distinct elements of order $2$ of the Galois group of $k/\mathbb{Q}$. Set $\tau_3= \tau_1\tau_2$ and let $k_1$, $k_2$ and $k_3$ be the three subextensions of $k$ fixed by  $\tau_1$,
	$\tau_2$ and $\tau_3$, respectively.    \label{algo wada}
	Then the unit group of $k$  
	is generated by the elements of  $E_{k_1}$, $E_{k_2}$ and $E_{k_3}$, and the square roots of elements of   $E_{k_1}E_{k_2}E_{k_3}$ which are perfect squares in $k$. 
	This result, known as Wada's method \cite{Wa-66}, is useful for computing   the
	index $  q(k)$ appearing in \eqref{wadaf}.
	
	\bigskip

	We close this section with the following lemma which is a particular case of Fukuda's theorem  \cite{fukuda}.

	\begin{lemma}[\cite{fukuda}]\label{lm fukuda}
		Let $k_\infty/k$ be a $\mathbb{Z}_2$-extension and $n_0$  an integer such that any prime of $k_\infty$ which is ramified in $k_\infty/k$ is totally ramified in $k_\infty/k_{n_0}$.
		\begin{enumerate}[\rm $1)$]
			\item If there exists an integer $n\geq n_0$ such that   $h_2(k_n)=h_2(k_{n+1})$, then $h_2(k_n)=h_2(k_{m})$ for all $m\geq n$.
			\item If there exists an integer $n\geq n_0$ such that $\2r( A(k_n))= \2r(A(k_{n+1}))$, then
			$\2r(A(k_{m}))= \2r(A(k_{n}))$ for all $m\geq n$.
		\end{enumerate}
		Here $h_2(k)$ denotes the $2$-class number of a number field $k$.
	\end{lemma}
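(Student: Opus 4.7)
The lemma is a specialization to $p=2$ of Fukuda's theorem, and I would follow the standard two-stage strategy: first prove that the norm maps between consecutive layers are surjective from level $n_0$ on, then use Nakayama's lemma on the Iwasawa module $X:=\varprojlim A(k_m)$ to propagate stabilization from one pair of layers to all subsequent ones.

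For the first stage, fix $m\geq n_0$ and let $L_m$ denote the $2$-Hilbert class field of $k_m$. Since $L_m/k_m$ is unramified while $k_{m+1}/k_m$ contains at least one totally ramified prime (by the hypothesis on ramification in $k_\infty/k$), we get $L_m\cap k_{m+1}=k_m$. Hence $L_m k_{m+1}/k_{m+1}$ is an unramified abelian $2$-extension of degree $[L_m:k_m]=h_2(k_m)$, and Artin reciprocity identifies the restriction $\mathrm{Gal}(L_{m+1}/k_{m+1})\twoheadrightarrow\mathrm{Gal}(L_m/k_m)$ with the norm map $A(k_{m+1})\twoheadrightarrow A(k_m)$, which is therefore surjective.

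For the second stage, I form $X:=\varprojlim_m A(k_m)$ with transition maps given by these norms; it is a finitely generated compact $\Lambda$-module with $\Lambda=\mathbb{Z}_2[[T]]$, $T=\gamma-1$ for a topological generator $\gamma$ of $\mathrm{Gal}(k_\infty/k_{n_0})$. Surjectivity of the norms yields $X\twoheadrightarrow A(k_m)$ for every $m\geq n_0$, and under the total ramification hypothesis Iwasawa's structure theorem identifies the kernel with $\omega_m X$, where $\omega_m=(1+T)^{2^{m-n_0}}-1$, up to a bounded correction recording the decomposition of the ramified primes. For part $(1)$, the hypothesis $h_2(k_n)=h_2(k_{n+1})$ becomes $|X/\omega_n X|=|X/\omega_{n+1} X|$; combined with the inclusion $\omega_{n+1}X\subseteq \omega_n X$ (from the factorization $\omega_{n+1}=\omega_n\cdot((1+T)^{2^{n-n_0}}+1)$), this forces $\omega_n X=\omega_{n+1}X$, and a routine Nakayama iteration then gives $\omega_m X=\omega_n X$, hence $h_2(k_m)=h_2(k_n)$, for all $m\geq n$. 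Part $(2)$ follows by the same argument after tensoring with $\mathbb{F}_2$: the $2$-rank of $A(k_m)$ equals $\dim_{\mathbb{F}_2} X/(2X+\omega_m X)$, and equality at $m=n,n+1$ forces this dimension to be constant for all $m\geq n$.

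The main obstacle is making precise the identification $A(k_m)\simeq X/\omega_m X$: strictly speaking one must twist by a finite $\Lambda$-submodule recording how each ramified prime of $k_\infty/k$ decomposes in the intermediate layers, but because the decomposition and inertia data are already frozen at level $n_0$ under our hypothesis, the correction is $\Lambda$-stable and is absorbed into the Nakayama iteration without affecting the conclusion. An alternative, more elementary route avoids the Iwasawa module entirely by combining the surjectivity established in stage one with the observation that a surjection of finite abelian $2$-groups with equal orders (resp.\ equal $2$-ranks) is forced to be an isomorphism (resp.\ to have kernel inside $2$ times the source), and then propagating stability inductively from $(k_n,k_{n+1})$ to $(k_{m},k_{m+1})$ via the cyclic-of-degree-$2$ instance of the ambiguous class number formula recalled in Lemma~\ref{AmbiguousClassNumberFormula}.
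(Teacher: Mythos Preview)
The paper does not prove this lemma at all: it is stated with the citation \cite{fukuda} and used as a black box. There is therefore nothing in the paper to compare your argument against.

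Your main line of proof is exactly Fukuda's original one and is essentially sound: surjectivity of the norm maps above level $n_0$, the description $A(k_m)\simeq X/\nu_{m,n_0}Y_{n_0}$ coming from Iwasawa's analysis of $\mathrm{Gal}(L_\infty/k_\infty)$, and then Nakayama. Two points deserve tightening. First, the kernel of $X\twoheadrightarrow A(k_m)$ is not $\omega_m X$ ``up to a bounded correction''; the exact statement is $A(k_m)\simeq X/\nu_{m,n_0}Y_{n_0}$ for a fixed $\Lambda$-submodule $Y_{n_0}\subseteq X$ determined by the inertia elements of the ramified primes (Washington, \S13.3). Your Nakayama step still goes through verbatim with $Y_{n_0}$ in place of $X$: from $\nu_{n,n_0}Y_{n_0}=\nu_{n+1,n_0}Y_{n_0}$ and $\nu_{n+1,n_0}=\nu_{n+1,n}\,\nu_{n,n_0}$ with $\nu_{n+1,n}\in\mathfrak m$, Nakayama forces $\nu_{n,n_0}Y_{n_0}=0$, hence $A(k_m)\simeq X$ for all $m\ge n$. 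Second, your ``alternative, more elementary route'' does not work as written: the ambiguous class number formula for $k_{m+1}/k_m$ involves the unit-norm index $[E_{k_m}:E_{k_m}\cap N k_{m+1}^*]$, and nothing in the hypothesis $h_2(k_n)=h_2(k_{n+1})$ lets you propagate control of that index from level $n$ to level $n+1$. I would drop that paragraph and keep the Iwasawa-module argument, stated with the correct $\nu_{m,n_0}Y_{n_0}$.
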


	\section{\bf The main results and their proofs}\label{sec3}
	
	  Let   $\eta$ and $\rho$  be two elements of  $\{1,2\}$ with $\eta\not=\rho$. Let  
	$ q\equiv     3\pmod 4$ and  $r\equiv s\equiv    5\pmod 8$   be  distinct prime numbers such that $\left(\frac{q}{r}\right)=\left(\frac{q}{s}\right)= (-1)^{\delta_{\eta,2}}$, where 
	$\delta_{a,b}$  denotes the Kronecker delta. 
	Denote by $\gamma$ and $\gamma'$   the integers such that  $\varepsilon_{\eta qrs}=\gamma+\gamma' \sqrt{\eta qrs}$.	  
		Put   
	$K=\QQ(\sqrt{\eta q},\sqrt{rs})$.  According to \cite[The main theorem, Items $5)$ and $14)$]{ChemseddinGreenbergConjectureI}, we have $\2r(A(K))=\2r(A(K_\infty))=2$.
	In this section, we aim to construct   a family of real biquadratic number fields of the form
	 $K$ such that $A(K)\simeq A(K_\infty)\simeq \ZZ/4\ZZ\times \ZZ/2^{m-1}\ZZ$, with $m\geq 2$. This leads to the construction of a family of real quadratic fields of the form  $F=\QQ(\sqrt{\eta qrs})$ such that     $A(F)\simeq A(F_\infty)\simeq \ZZ/2\ZZ\times \ZZ/2^{m}\ZZ$ and for which we give a necessary and sufficient condition for  the Galois group 
	$\mathrm{Gal}(\mathcal{L}(F_\infty)/F_\infty)$, where  $\mathcal{L}(F_\infty)$ denotes the  maximal  unramified pro-$2$-extension  of $F_\infty$, to be  a  
	metacyclic-nonmodular $2$-group of Type $1$ with $\alpha=2$.

	\begin{lemma}[\cite{Az-00}, Lemme 5]\label{lem2}
		Let $d>1$ be a square free integer and $\varepsilon_d=x+y\sqrt d$, where $x$, $y$ are  integers or semi-integers. If $N(\varepsilon_d)=1$, then $2(x+1)$, $2(x-1)$, $2d(x+1)$ and $2d(x-1)$ are not squares in $\QQ$.
	\end{lemma}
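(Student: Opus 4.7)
The plan is to exploit the identity that expresses $(\varepsilon_d \pm 1)^2$ in terms of $2(x\pm1)\varepsilon_d$, and then derive a contradiction with fundamentality by showing that a rational square assumption would force $\varepsilon_d$ to be a square in $\mathbb{Q}(\sqrt{d})$.

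First I would compute directly. Since $N(\varepsilon_d) = \varepsilon_d \bar{\varepsilon}_d = 1$, we have $\bar{\varepsilon}_d = \varepsilon_d^{-1}$, so
\[
(\varepsilon_d + 1)^2 = \varepsilon_d^2 + 2\varepsilon_d + 1 = \varepsilon_d\bigl(\varepsilon_d + 2 + \bar{\varepsilon}_d\bigr) = 2(x+1)\,\varepsilon_d,
\]
and analogously $(\varepsilon_d - 1)^2 = 2(x-1)\varepsilon_d$. These two identities are the only algebraic input the proof needs; everything else is formal.

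Second, I would assume for contradiction that $2(x+1) = c^2$ for some $c \in \mathbb{Q}^*$. Then the first identity rewrites as $\varepsilon_d = \bigl((\varepsilon_d+1)/c\bigr)^{2}$, exhibiting $\alpha := (\varepsilon_d+1)/c \in \mathbb{Q}(\sqrt{d})$ with $\alpha^2 = \varepsilon_d$. Since $\varepsilon_d$ is an algebraic integer and $\alpha \in \mathbb{Q}(\sqrt{d})$ is a root of $X^2 - \varepsilon_d$, $\alpha$ is an integer of $\mathbb{Q}(\sqrt{d})$; moreover $N(\alpha)^2 = N(\varepsilon_d) = 1$, so $\alpha$ is a unit. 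This contradicts the fundamentality of $\varepsilon_d$: any unit $\alpha$ with $\alpha^2 = \varepsilon_d$ would be a ``smaller'' fundamental-unit candidate, ruling out such a $c$. The same argument applied to $(\varepsilon_d-1)^2 = 2(x-1)\varepsilon_d$ handles $2(x-1)$.

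Third, for the cases $2d(x\pm1) = c^2$, I would write $2(x\pm1) = c^2/d$ and substitute into the same identities to get
\[
\varepsilon_d = \left(\frac{\sqrt{d}\,(\varepsilon_d\pm 1)}{c}\right)^{2},
\]
again a square in $\mathbb{Q}(\sqrt{d})$ of an element which, as before, must be a unit integer, yielding the same contradiction. The fact that $x,y$ may be semi-integers plays no role because the identities and the unit/integer arguments take place entirely inside $\mathbb{Q}(\sqrt{d})$. There is no real obstacle: the only subtlety is confirming that the candidate square root $\alpha$ is an integral unit, which follows automatically from $\alpha^2 = \varepsilon_d$ together with $N(\varepsilon_d) = 1$.
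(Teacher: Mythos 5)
Your proof is correct, and it is worth noting that the paper itself contains no proof of this lemma: it is quoted from Azizi \cite{Az-00}, Lemme 5, so the relevant comparison is with that classical argument. Azizi's route starts from the factorization $(x+1)(x-1)=dy^2$ (available since $N(\varepsilon_d)=1$) and, via a gcd/case analysis of how $d$ and the factor $2$ distribute between $x+1$ and $x-1$, shows that squareness of any of $2(x\pm1)$, $2d(x\pm1)$ would yield $\sqrt{\varepsilon_d}=y_1+y_2\sqrt{d}\in\QQ(\sqrt d)$, contradicting the fact that the fundamental unit is not a square in the unit group $\{\pm\varepsilon_d^n : n\in\ZZ\}$. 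Your argument reaches exactly the same contradiction but replaces the factorization and case analysis by the single identity $(\varepsilon_d\pm1)^2=2(x\pm1)\varepsilon_d$, which is genuinely cleaner: it treats the integer and semi-integer cases uniformly, handles the $2d(x\pm1)$ cases by the same one-line substitution, and makes integrality of the candidate square root automatic (it is a root of $X^2-\varepsilon_d$, monic with integral coefficients, and has norm $\pm1$, hence is a unit). Two micro-points you should make explicit to be airtight: (i) you need $c\neq 0$, i.e.\ $x\neq\pm1$; this holds because $x=\pm1$ together with $x^2-dy^2=1$ forces $y=0$, so $\varepsilon_d=\pm1$ would not be a fundamental unit; (ii) the closing contradiction is best phrased through the unit group structure rather than ``a smaller fundamental-unit candidate'': writing $\alpha=\pm\varepsilon_d^{k}$ and squaring gives $\varepsilon_d=\varepsilon_d^{2k}$, hence $\varepsilon_d^{2k-1}=1$, impossible since $\varepsilon_d$ is not a root of unity. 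With those two sentences added, your proof is complete and, if anything, more streamlined than the cited one.
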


	\begin{lemma}\label{etaqrs}
		Let $ q\equiv     3\pmod 4$ and  $r\equiv s\equiv    5\pmod 8$   be  distinct prime numbers such that $\left(\frac{q}{r}\right)=\left(\frac{q}{s}\right)= (-1)^{\delta_{\eta,2}}$ and 
		$\left(\frac{r}{s}\right)=1$. Let   $\eta$ and $\rho$  be two elements of  $\{1,2\}$ with $\eta\not=\rho$.
		\begin{enumerate}[\rm $1)$]
			\item  Let $\gamma$   and $\gamma'$  be the integers such that
			$ \varepsilon_{\eta qrs}=\gamma+\gamma'\sqrt{\eta qrs}$.   Then  out of the three integers	 $2^{\delta_{\eta,1}}q(\gamma-1)$, $2r(\gamma+(-1)^{\delta_{\eta,2}})$ and  $2s(\gamma+(-1)^{\delta_{\eta,2}})$, exactly one of them is   a square in $\NN $. Moreover, we have:
			\begin{enumerate}[\rm $a)$]
				\item If $2^{\delta_{\eta,1}}q(\gamma-1)$ is a square in $\NN$, then $\sqrt{\eta\varepsilon_{\eta qrs}}= \gamma_1\sqrt{q} +\gamma_2\sqrt{\eta rs}  \text{ and } \eta=-q\gamma_1^2+\eta rs\gamma_2^2,$

				\item If $2r(\gamma+(-1)^{\delta_{\eta,2}})$ is a square in $\NN$, then $\sqrt{\eta\varepsilon_{\eta qrs}}= \gamma_1\sqrt{\eta r} +\gamma_2\sqrt{qs}$ and $\eta=(-1)^{\delta_{\eta,2}} \eta r\gamma_1^2+(-1)^{\delta_{\eta,1}}qs\gamma_2^2,$

				\item If $2s(\gamma+(-1)^{\delta_{\eta,2}})$  is a square in $\NN$, then $\sqrt{\eta\varepsilon_{\eta qrs}}= \gamma_1\sqrt{\eta s} +\gamma_2\sqrt{qr}$ and $\eta=(-1)^{\delta_{\eta,2}} \eta s\gamma_1^2+(-1)^{\delta_{\eta,1}}qr\gamma_2^2$.
				
			\end{enumerate}
			Here $\gamma_1$ and $\gamma_2$ are two integers.
			
			\item Let $x$ and $y$   be the integers such that
			$\varepsilon_{\rho qrs}=x+y\sqrt{\rho qrs}$. Then  out of the two integers    $2^{{\delta_{\rho,2}}}q(x+1)$ and  $2^{{\delta_{\rho,2}}}q(x-1)$, exactly one of them is   a square in $\NN $. Moreover, we have:
			\begin{enumerate}[\rm $a)$]

				\item If     $2^{{\delta_{\rho,2}}}q(x+1)$ is a square in $\NN$, then  $\sqrt{2\varepsilon_{\rho qrs}}= y_1\sqrt{\rho q} +y_2\sqrt{rs}  \text{ and } 2=\rho qy_1^2-rsy_2^2.$
				
				\item 	If  $2^{{\delta_{\rho,2}}}q(x-1)$ is a square in $\NN$, then  $\sqrt{2\varepsilon_{\rho qrs}}= y_1\sqrt{\rho q} +y_2\sqrt{rs}  \text{ and } 2=-\rho qy_1^2+rsy_2^2.$ 
				
			\end{enumerate}
			Here $y_1$ and $y_2$ are two integers.

		\end{enumerate}
	\end{lemma}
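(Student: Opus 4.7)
The plan is to derive both items from the single Pell-type identity satisfied by the fundamental unit, followed by a Kaplan-style coprime descent on $\gamma\pm 1$ (respectively $x\pm 1$) controlled by quadratic reciprocity.

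First I would check that $N(\varepsilon_{\eta qrs})=+1$ and $N(\varepsilon_{\rho qrs})=+1$. In both cases the radicand is divisible by the prime $q\equiv 3\pmod 4$, so the classical local obstruction at $q$ (the congruence $-1\not\equiv\square\pmod q$) forbids $-1$ from being a norm from the quadratic field; hence
\[
(\gamma-1)(\gamma+1)=\eta qrs\,\gamma'^2 \qquad\text{and}\qquad (x-1)(x+1)=\rho qrs\, y^2.
\]
A short parity analysis using $q\equiv 3\pmod 4$ and $r\equiv s\equiv 5\pmod 8$ shows that $\gamma$ and $x$ are odd; the $2$-adic valuations of $\gamma\pm1$ and $x\pm1$ are then pinned down modulo $\eta$ and $\rho$, and the coprime factorization of each side is controlled solely by the distribution of the three odd primes $q,r,s$ between the two factors.

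For item $1)$, each of the $2^{3}=8$ a priori distributions of $\{q,r,s\}$ between $\gamma-1$ and $\gamma+1$ produces, after writing both factors as a fixed constant times a square, a set of Legendre symbol identities obtained by reducing the Pell relation modulo $q$, $r$ and $s$. I would tabulate these eight cases and, using the hypotheses $\left(\frac{q}{r}\right)=\left(\frac{q}{s}\right)=(-1)^{\delta_{\eta,2}}$ and $\left(\frac{r}{s}\right)=1$, together with the standard values $\left(\frac{-1}{q}\right)=-1$ and $\left(\frac{2}{r}\right)=\left(\frac{2}{s}\right)=-1$, eliminate five of them. The three survivors are precisely those for which $2^{\delta_{\eta,1}}q(\gamma-1)$, $2r(\gamma+(-1)^{\delta_{\eta,2}})$ or $2s(\gamma+(-1)^{\delta_{\eta,2}})$ is a square; the same reciprocity bookkeeping shows any two are mutually exclusive, yielding the ``exactly one'' claim. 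In case $(a)$, for instance, one obtains integers $\gamma_1,\gamma_2$ with $\gamma-1=\frac{2}{\eta}q\gamma_1^{2}$, $\gamma+1=\frac{2}{\eta}rs\,\gamma_2^{2}$ and $\eta\gamma'=2\gamma_1\gamma_2$, and a direct squaring-out gives
\[
\eta\,\varepsilon_{\eta qrs}=q\gamma_1^{2}+\eta rs\,\gamma_2^{2}+2\gamma_1\gamma_2\sqrt{\eta qrs}=\bigl(\gamma_1\sqrt{q}+\gamma_2\sqrt{\eta rs}\bigr)^{2};
\]
adjusting the signs of $\gamma_1,\gamma_2$ so that the right-hand side is positive identifies $\sqrt{\eta\varepsilon_{\eta qrs}}$, and taking the norm of this equality down to $\QQ$ forces $\eta=-q\gamma_1^{2}+\eta rs\,\gamma_2^{2}$. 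Cases $(b)$ and $(c)$ follow by the identical argument, interchanging the roles played by $q$ and by $r$ or $s$.

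Item $2)$ is obtained by rerunning the same machine on $\rho qrs$. The only change in the input is that $\left(\frac{q}{r}\right)=\left(\frac{q}{s}\right)=(-1)^{\delta_{\rho,2}}=-(-1)^{\delta_{\eta,2}}$, i.e.\ the Legendre symbols flip relative to item $1)$. This flip kills the two distributions that singled out a single prime $r$ or $s$, leaving only the two distributions in which $rs$ stays together, which correspond precisely to $2^{\delta_{\rho,2}}q(x+1)$ or $2^{\delta_{\rho,2}}q(x-1)$ being a square; the explicit form of $\sqrt{2\varepsilon_{\rho qrs}}$ and the sign in $\pm 2=\rho q y_1^{2}-rs y_2^{2}$ drop out of the same squaring-out computation. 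The main obstacle throughout is the combinatorial bookkeeping of the reciprocity conditions: one must verify case by case that the Legendre symbol constraints imposed by the hypotheses leave exactly the advertised subset of distributions realizable, while simultaneously tracking the parities of $\gamma,\gamma'$ (resp.\ $x,y$) and the extra factor of $2$ contributed by $\eta$ (resp.\ $\rho$); the remaining algebraic manipulations are routine.
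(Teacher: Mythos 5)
Your overall architecture coincides with the paper's: both arguments start from $N(\varepsilon)=+1$ (forced by the prime $q\equiv 3\pmod 4$ dividing the radicand), factor the Pell relation $(\gamma-1)(\gamma+1)=\eta qrs\,\gamma'^2$ into almost-coprime pieces, eliminate the impossible splittings by reducing modulo $q$, $r$, $s$ and invoking the hypothesis symbols together with $\left(\frac{2}{r}\right)=\left(\frac{2}{s}\right)=-1$ and $\left(\frac{-1}{q}\right)=-1$, and finally recover $\sqrt{\eta\varepsilon_{\eta qrs}}$ by summing and subtracting the equations of the surviving system exactly as the paper does (it lists six explicit systems where you propose a distribution table, but the computation is the same).

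There is, however, a genuine error in your reduction step: the claim that ``a short parity analysis shows that $\gamma$ and $x$ are odd,'' which you then use to pin the $2$-adic valuations of $\gamma\pm1$ and $x\pm1$ and to restrict the tabulation to the eight distributions of $\{q,r,s\}$ with the factor $2$ already placed in both factors. This claim is false. For $\eta=1$ and $q\equiv 3\pmod 8$, all congruences modulo $8$ are consistent with $\gamma\equiv 2\pmod 4$, in which case $\gamma-1$ and $\gamma+1$ are odd and coprime; the odd-type splittings such as $\gamma\pm1=q\gamma_1^2$, $\gamma\mp1=rs\gamma_2^2$ therefore cannot be discarded by parity, and the paper must --- and does --- refute them by the same Legendre-symbol computations (its systems $(4)$--$(6)$, killed via $\left(\frac{\pm 2}{s}\right)=-1$). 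Worse, for $\rho=1$ (i.e.\ $\eta=2$) your parity claim contradicts the lemma itself: if $q(x+1)$ is a square, the realized factorization is $x+1=qy_1^2$, $x-1=rsy_2^2$ with $y_1,y_2$ odd, whence $x$ is \emph{even}. So your tabulation, as described, both omits cases that must be refuted (item $1)$ with $\eta=1$) and would wrongly exclude the case that actually occurs (item $2)$ with $\rho=1$). The repair stays entirely inside your own toolkit: enlarge the case list to all factorizations compatible with $\gcd(\gamma-1,\gamma+1)\mid 2$, discard the trivial splittings $d_1\in\{1,qrs\}$ by the nonsquare facts of Lemma \ref{lem2}, and run the identical reciprocity eliminations on the odd-type systems as well; but as written the proof is incomplete on precisely the cases where the parity shortcut fails.
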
	 
	\begin{proof}Let us first assume that $\eta=1$ $($i.e. $\rho=2)$. 
		\begin{enumerate}[\rm1)]
			\item 
			As $ N(\varepsilon_{qrs})=1 $,  by the unique factorization  of $ \gamma^{2}-1=qrs\gamma'^{2} $ in $ \mathbb{Z} $, and Lemma \ref{lem2}, there exist $\gamma_1$ and $\gamma_2$ in $\mathbb{Z}$ such that  we have one of the following systems:
			$$(1):\ \left\{ \begin{array}{ll}
				\gamma\pm1=2q\gamma_1^2\\
				\gamma\mp1=2rs\gamma_2^2,
			\end{array}\right.  \quad
			(2):\ \left\{ \begin{array}{ll}
				\gamma\pm1=2r\gamma_1^2\\
				\gamma\mp1=2qs\gamma_2^2,
			\end{array}\right. \quad
			(3):\ \left\{ \begin{array}{ll}
				\gamma\pm1=2s\gamma_1^2\\
				\gamma\mp1=2qr\gamma_2^2,
			\end{array}\right. 
			$$

			$$   
			(4):\ \left\{ \begin{array}{ll}
				\gamma\pm1=q\gamma_1^2\\
				\gamma\mp1=rs\gamma_2^2,
			\end{array}\right. \quad (5):\ \left\{ \begin{array}{ll}
				\gamma\pm1=r\gamma_1^2\\
				\gamma\mp1=qs\gamma_2^2,
			\end{array}\right.\text{   }(6):\ \left\{ \begin{array}{ll}
				\gamma\pm1=s\gamma_1^2\\
				\gamma\mp1=qr\gamma_2^2
			\end{array}\right.
			$$
			
		$$\text{ or }(7):\ \left\{ \begin{array}{ll}
			\gamma\pm1= \gamma_1^2\\
			\gamma\mp1=qrs\gamma_2^2.
		\end{array}\right.
		$$

			\begin{enumerate}[\rm$\bullet$]
				\item  Assume that the system $\left\{ \begin{array}{ll}
					\gamma+1=2q\gamma_1^2\\
					\gamma-1=2rs\gamma_2^2,
				\end{array}\right.$ holds. We have:	
				\[\left(\dfrac{2}{q}\right)=\left(\dfrac{2rs\gamma_2^2}{q}\right)=\left(\dfrac{\gamma-1}{q}\right)=\left(\dfrac{\gamma+1-2}{q}\right)=\left(\dfrac{2q\gamma_1 ^2-2}{q}\right)=\left(\dfrac{-2}{q}\right)=-\left(\dfrac{2}{q}\right),
				\]
				which is absurd. So this system can not hold.
				
				\item Assume that the system $\left\{ \begin{array}{ll}
					\gamma\pm1=q\gamma_1^2\\
					\gamma\mp1=rs\gamma_2^2 
				\end{array}\right.$ holds. We have:	
				\[1=\left(\dfrac{q\gamma_1^2}{s}\right)=\left(\dfrac{\gamma\pm1}{s}\right)=\left(\dfrac{\gamma\mp1\pm2}{s}\right) =\left(\dfrac{2}{s}\right)=-1,
				\]
				which is absurd. So this system also can not hold.
			\end{enumerate}
		We eliminate the remaining systems in the same manner, except the following:
			$$(1):\ \left\{ \begin{array}{ll}
				\gamma-1=2q\gamma_1^2\\
				\gamma+1=2rs\gamma_2^2,
			\end{array}\right.  \quad \left\{ \begin{array}{ll}
				\gamma+1=2r\gamma_1^2\\
				\gamma-1=2qs\gamma_2^2
			\end{array}\right. \quad \text{ and } \left\{ \begin{array}{ll}
				\gamma+1=2s\gamma_1^2\\
				\gamma-1=2qr\gamma_2^2.
			\end{array}\right.$$
			By subtracting and summing the equations of each system, we get the result of our first item for $\eta=1$. 
			\item For the second item, we proceed similarly and eliminate all systems except $\left\{ \begin{array}{ll}
				x\pm1=2qy_1^2\\
				x\mp1=rsy_2^2,
			\end{array}\right. $ with $y_1$ and $y_2$ are two integers such that $y= y_1y_2$. This  gives the second item for $\eta=1$ $($i.e. $\rho=2)$.
		\end{enumerate}
		The proof is analogous for the case $\eta=2$.
	\end{proof}

	\begin{example}  \label{exam3.3}
		Using 
		PARI/GP (cf. \cite{PARI}), we  get the following examples   illustrating the hypothesis of  Lemma \ref{etaqrs}.
	In the below example let  $\gamma$, $\gamma'$, $x$ and $y$  be the integers such that
		$ \varepsilon_{\eta qrs}=\gamma+\gamma'\sqrt{\eta qrs}$ and $ \varepsilon_{\rho qrs}=x+y\sqrt{\rho qrs}$. We have:
		\begin{enumerate}[$1)$]
			\item \begin{enumerate}[$a)$]
				\item	For $\eta=1$,  $q=23$, $r=13$ and $s=29$, we have $\left(\frac{23}{13}\right)=\left(\frac{23}{29}\right)= 1$,  
			$\left(\frac{13}{29}\right)=1$,  
			   $\gamma=2403974952$,  $\gamma'=25816380$, $x=40559167$ and $y=307992$. Moreover,  $  2  s(\gamma+1)$  and  $  2  q(x-1)$ are  squares in $\NN$. In fact, we have:
			    $$  2  s(\gamma+1)   =2\cdot 29\cdot(2^3\cdot3^2\cdot29^3\cdot37^2) \text{ and }2  q(x-1)=2\cdot 23\cdot (2\cdot 3^2\cdot23\cdot 313^2).$$

				\item	For $\eta=2$,  $q=31$, $r=53$ and $s=13$, we have  $\left(\frac{31}{53}\right)=\left(\frac{31}{13}\right)=  1$, 
			$\left(\frac{53}{13}\right)=1$,
			$\gamma=12401$,  $\gamma'=60$, $x=33760$ and $y=231$. Moreover,  $q(\gamma-1)$  and  $   q(x-1)$ are  squares in $\NN$.  
			    In fact, we have:
			     $$  q(\gamma-1)   =31\cdot(2^4\cdot5^2\cdot31) \text{ and }  q(x-1)= 31\cdot (  3^2\cdot11^2\cdot 31).$$

			    \end{enumerate}

			\item In the following tables (cf. Tables \ref{tableexamplesqs00} and \ref{tableexamplesqs002}) we give numerical examples satisfying the conditions of each possibility in   Lemma \ref{etaqrs}.
			 
		 	\begin{table}[H]
				{ \footnotesize
					$$ \begin{tabular}{  |c |c |c| c |c|  }
						\hline	\rsp  $(\eta, q,r,s)$  & $(1, 43,53,13)$ &$(1, 131,53,13)$ & $(1,211,53,13)$    \\ 
						\hline

						\rsp	\text{The case}    &   $2q(\gamma-1) \in \NN^2$  & $2r(\gamma+1) \in \NN^2$ & $2s(\gamma+1) \in \NN^2$      \\
						\hline 
						 \hline 
						  	\rsp  $(\eta, q,r,s)$ & $(2, 31,53,13)$ &$(2, 67,53,13)$ & $(2, 19,53,13)$    \\ 
						 \hline

						 \rsp	\text{The case}    &   $q(\gamma-1) \in \NN^2$  & $2r(\gamma-1) \in \NN^2$ & $2s(\gamma-1) \in \NN^2$       \\
						  \hline
						
					\end{tabular}$$}
				\caption{Examples on  $2^{\delta_{\eta,1}}q(\gamma-1)$, $2r(\gamma+(-1)^{\delta_{\eta,2}})$ or  $2s(\gamma+(-1)^{\delta_{\eta,2}}) \in \NN^2$} \label{tableexamplesqs00}
				
			\end{table}

			\begin{table}[H]
			{ \footnotesize
				$$ \begin{tabular}{  |c |c |c| c |c|  }
					\hline	\rsp  $(\eta, q,r,s)$  & $(1, 43,53,13)$ &$(1, 199,53,13)$      \\ 
					\hline

					\rsp	\text{The case}    &   $  2q(x+1) \in \NN^2$  & $2q(x-1) \in \NN^2$       \\
					\hline 
					\hline 
					\rsp  $(\eta, q,r,s)$ & $(2, 19,53,13)$ &$(2, 71,53,13)$     \\ 
					\hline

					\rsp	\text{The case}    &   $q(x+1) \in \NN^2$  & $q(x-1) \in \NN^2$        \\
					\hline
					
				\end{tabular}$$}
			\caption{Examples on  $2^{{\delta_{\rho,2}}}q(x+1)$ or  $2^{{\delta_{\rho,2}}}q(x-1)\in \NN^2$} \label{tableexamplesqs002}
			
		\end{table}	
	 	\end{enumerate}
		
	\end{example}

	\begin{corollary}\label{unitqrs}
		Let $ q\equiv     3\pmod 4$ and  $r\equiv s\equiv    5\pmod 8$   be  distinct   prime numbers such that $\left(\frac{q}{r}\right)=\left(\frac{q}{s}\right)= (-1)^{\delta_{\eta,2}}$ and 
		$\left(\frac{r}{s}\right)=1$. Put $F=\QQ(\sqrt{\eta qrs})$ and $ \varepsilon_{\eta qrs}=\gamma+\gamma'\sqrt{\eta qrs}$. Then $q(F_1)=1$ or $2$. Furthermore, we have:
		$$q(F_1) =1\text{ if and only if }  2^{\delta_{\eta,1}}q(\gamma-1)\not\in \NN^2.$$
	\end{corollary}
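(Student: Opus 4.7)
I apply Wada's method to the real biquadratic field $F_1=\QQ(\sqrt 2,\sqrt{\eta qrs})=\QQ(\sqrt 2,\sqrt{qrs})$, which is the same field for both $\eta\in\{1,2\}$. Its three quadratic subfields are $k_1=\QQ(\sqrt 2)$, $k_2=\QQ(\sqrt{qrs})$ and $k_3=\QQ(\sqrt{2qrs})$, with fundamental units $\varepsilon_2$ of norm $-1$ and $\varepsilon_{qrs},\varepsilon_{2qrs}$ of norm $1$ (the last two norms being forced by $q\equiv 3\pmod 4$). By Wada, $q(F_1)$ is the order of the subgroup of classes in $E_{k_1}E_{k_2}E_{k_3}/(E_{k_1}E_{k_2}E_{k_3})^2$ that become squares in $F_1$. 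Since $\varepsilon_2$ is not totally positive ($1-\sqrt 2<0$), any such class involves only an even power of $\varepsilon_2$; the remaining candidates are $1,\varepsilon_{qrs},\varepsilon_{2qrs},\varepsilon_{qrs}\varepsilon_{2qrs}$, so a priori $q(F_1)\in\{1,2,4\}$.

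To rule out $q(F_1)=4$, I would show $\sqrt{\varepsilon_{qrs}},\sqrt{\varepsilon_{2qrs}}\notin F_1$. Since $\sqrt 2\in F_1^*$, these are equivalent to $\sqrt{\eta\varepsilon_{\eta qrs}}\notin F_1$ and $\sqrt{2\varepsilon_{\rho qrs}}\notin F_1$. By Lemma~\ref{etaqrs}, each of these has the shape $\beta_1\sqrt{d_1}+\beta_2\sqrt{d_2}$ with $\beta_1\beta_2\neq 0$ (otherwise the corresponding unit would be rational) and $\sqrt{d_1d_2}\in\{\sqrt{\eta qrs},\sqrt{\rho qrs}\}$. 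The ambient biquadratic field $\QQ(\sqrt{d_1},\sqrt{d_2})$ meets $F_1$ only in its common quadratic subfield $\QQ(\sqrt{d_1d_2})$, because none of $d_1,d_2,d_1d_2$ is twice a square, so $\sqrt 2\notin\QQ(\sqrt{d_1},\sqrt{d_2})$. In the $\QQ$-basis $\{1,\sqrt{d_1},\sqrt{d_2},\sqrt{d_1d_2}\}$, the element $\beta_1\sqrt{d_1}+\beta_2\sqrt{d_2}$ has no component along $\{1,\sqrt{d_1d_2}\}$, so it cannot lie in this intersection. Hence $q(F_1)\leq 2$, with equality iff $\varepsilon_{qrs}\varepsilon_{2qrs}\in F_1^{*2}$.

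For the remaining question I compute
\[\sqrt{\eta\varepsilon_{\eta qrs}}\cdot\sqrt{2\varepsilon_{\rho qrs}}=\sqrt{2\eta}\cdot\sqrt{\varepsilon_{qrs}\varepsilon_{2qrs}}\]
and note that $\sqrt{2\eta}\in F_1$, so $q(F_1)=2$ iff the left-hand side lies in $F_1$. Assume first $2^{\delta_{\eta,1}}q(\gamma-1)\in\NN^2$: item~$(1a)$ of Lemma~\ref{etaqrs} gives the ``aligned'' decomposition $\sqrt{\eta\varepsilon_{\eta qrs}}=\gamma_1\sqrt q+\gamma_2\sqrt{\eta rs}$, together with $\sqrt{2\varepsilon_{\rho qrs}}=y_1\sqrt{\rho q}+y_2\sqrt{rs}$ from item~$(2)$; multiplying out and using $\sqrt q\sqrt{\rho q}=q\sqrt\rho$, $\sqrt{\eta rs}\sqrt{rs}=rs\sqrt\eta$, $\sqrt q\sqrt{rs}=\sqrt{qrs}$ and $\sqrt{\eta rs}\sqrt{\rho q}=\sqrt{\eta\rho qrs}=\sqrt{2qrs}$, the product is a $\QQ$-combination of $\{1,\sqrt 2,\sqrt{qrs},\sqrt{2qrs}\}$, hence lies in $F_1$ and gives $q(F_1)=2$. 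If instead items~$(1b)$ or $(1c)$ hold, then $\sqrt{\eta\varepsilon_{\eta qrs}}$ pairs $\sqrt{\eta r}$ with $\sqrt{qs}$ (or $\sqrt{\eta s}$ with $\sqrt{qr}$), and the analogous expansion produces four terms whose irrational parts ($\sqrt{qr}$ or $\sqrt{qs}$, $\sqrt{\rho qr}$ or $\sqrt{\rho qs}$, $\sqrt s$ or $\sqrt r$, $\sqrt{\rho s}$ or $\sqrt{\rho r}$) have nonzero rational coefficients and none lie in $F_1$. Therefore the product is not in $F_1$ and $q(F_1)=1$.

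The principal obstacle is the uniform bookkeeping of $\eta\in\{1,2\}$ against the three decomposition types of Lemma~\ref{etaqrs}; the crux is observing that case~$(1a)$ is the unique one where the radicals of $\sqrt{\eta\varepsilon_{\eta qrs}}$ and $\sqrt{2\varepsilon_{\rho qrs}}$ match up so that each of the four cross-products falls in $\QQ\cdot\{1,\sqrt 2,\sqrt{qrs},\sqrt{2qrs}\}$.
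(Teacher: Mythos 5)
Your proof is correct and takes essentially the same route as the paper, whose entire proof of Corollary~\ref{unitqrs} is the one-line observation that it follows from Lemma~\ref{etaqrs} together with Wada's method applied to $F_1=\QQ(\sqrt{2},\sqrt{qrs})$ --- you have simply written out the details (total positivity to kill odd $\varepsilon_2$-exponents, intersection of biquadratic fields with $F_1$, and the cross-product expansion distinguishing case $(1a)$ from $(1b)$--$(1c)$) that the paper leaves implicit. One cosmetic slip: in your final list of radicals the cross-term $\sqrt{\eta r}\cdot\sqrt{\rho q}$ always equals $\sqrt{2qr}$ (since $\eta\rho=2$), not $\sqrt{\rho qr}$, but the four radicals occurring remain pairwise distinct and outside $\{1,2,qrs,2qrs\}$ modulo squares, so your conclusion is unaffected.
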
	
	\begin{proof}
		This follows from  Lemma \ref{etaqrs} and Wada's method   (cf. Page \pageref{wada's f.}) applied to   $F_1=\QQ(\sqrt{qrs},\sqrt{2})$. In fact, we deduce that
		a fundamental system of units of $F_1$ is $\{\varepsilon_{2},\varepsilon_{\eta qrs}, \sqrt{\varepsilon_{\eta qrs}\varepsilon_{\rho qrs}}\}$ or $\{\varepsilon_{2},\varepsilon_{\eta qrs},  \varepsilon_{\rho qrs} \}$ according to whether $2^{\delta_{\eta,1}}q(\gamma-1)\in \NN^2$ or not.
	\end{proof}
	\begin{corollary}\label{corllaryh(F)=h(qrs)}
		Let $ q\equiv     3\pmod 4$ and  $r\equiv s\equiv    5\pmod 8$   be  distinct   prime numbers such that $\left(\frac{q}{r}\right)=\left(\frac{q}{s}\right)= (-1)^{\delta_{\eta,2}}$ and 
		$\left(\frac{r}{s}\right)=1$. Put $F=\QQ(\sqrt{\eta qrs})$ and $ \varepsilon_{\eta qrs}=\gamma+\gamma'\sqrt{\eta qrs}$. Then $h_2(F_1)=h_2(\eta qrs)$ or $2h_2(\eta qrs)$ and 
		$$h_2(F_1) =h_2(\eta qrs)\text{ if and only if }  2^{\delta_{\eta,1}}q(\gamma-1)\not\in \NN^2.$$
	\end{corollary}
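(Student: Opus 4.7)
The plan is to derive this corollary as a direct consequence of Kuroda's class number formula (Lemma~\ref{wada's f.}) applied to the real biquadratic field $F_1 = F\cdot\QQ(\sqrt 2) = \QQ(\sqrt{\eta qrs},\sqrt 2)$, whose three quadratic subfields are $\QQ(\sqrt 2)$, $F = \QQ(\sqrt{\eta qrs})$ and $\QQ(\sqrt{\rho qrs})$ (with $\rho\in\{1,2\}\setminus\{\eta\}$; note that both choices of $\eta$ yield the same field $F_1$). Since $h(\QQ(\sqrt 2))=1$ and the exponent is $v = 2(2^{1}-1)=2$ for a real biquadratic field, taking $2$-parts of the formula yields
\[
h_2(F_1) \;=\; \frac{q(F_1)}{4}\, h_2(\eta qrs)\, h_2(\rho qrs).
\]
Corollary~\ref{unitqrs} provides the unit index: $q(F_1)\in\{1,2\}$, with $q(F_1)=1$ precisely when $2^{\delta_{\eta,1}}q(\gamma-1)\notin\NN^2$. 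Hence the whole corollary reduces to showing that $h_2(\rho qrs)=4$ under the stated hypotheses.

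To establish $h_2(\rho qrs)=4$, I would apply the ambiguous class number formula (Lemma~\ref{AmbiguousClassNumberFormula}) to $\QQ(\sqrt{\rho qrs})/\QQ$: the set of ramified primes is $\{2,q,r,s\}$ (so $t=4$, since $\rho qrs$ is always even), and Lemma~\ref{etaqrs}(2) forces $N(\varepsilon_{\rho qrs})=+1$ (the admissible square decompositions of $x\pm 1$ in that lemma correspond precisely to the positive-norm case). A routine genus-theoretic analysis then gives $e_{\QQ(\sqrt{\rho qrs})/\QQ}=1$ and hence $\mathrm{rank}_2 A(\QQ(\sqrt{\rho qrs}))=2$. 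A short $4$-rank computation via the Rédei matrix, invoking the residue hypotheses $\left(\frac{q}{r}\right)=\left(\frac{q}{s}\right)=(-1)^{\delta_{\eta,2}}$ and $\left(\frac{r}{s}\right)=1$, shows that the $2$-class group is of type $(2,2)$, so $h_2(\rho qrs)=4$.

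Substituting back into Kuroda gives $h_2(F_1) = q(F_1)\, h_2(\eta qrs)$, so $h_2(F_1)\in\{h_2(\eta qrs),\,2h_2(\eta qrs)\}$ and the equivalence characterizing the case of equality is inherited verbatim from Corollary~\ref{unitqrs}. The main obstacle is the refinement from $\mathrm{rank}_2 A(\QQ(\sqrt{\rho qrs}))=2$ to the exact value $h_2(\rho qrs)=4$: the $2$-rank is automatic from genus theory, but excluding higher $2$-power divisibility demands a careful $4$-rank analysis using the precise Legendre-symbol constraints above.
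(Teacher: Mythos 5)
Your proposal is correct and follows the same skeleton as the paper's proof for the outer steps: the paper likewise applies Kuroda's formula to $F_1=\QQ(\sqrt{2},\sqrt{qrs})$ (getting $h_2(F_1)=\frac{q(F_1)}{4}h_2(qrs)h_2(2qrs)$, with $h_2(2)=1$) and then invokes Corollary \ref{unitqrs} verbatim for the unit-index dichotomy. Where you genuinely diverge is the middle step: the paper disposes of the factor $h_2(\rho qrs)$ by citing \cite[Propositions 1 and 2]{BenjShn(22)} for the identity $h_2(qrs)h_2(2qrs)=4h_2(\eta qrs)$, i.e.\ exactly your claim $h_2(\rho qrs)=4$, whereas you propose to prove it directly via the ambiguous class number formula plus a R\'edei-matrix $4$-rank computation. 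Your route is sound and has the merit of being self-contained (I checked: the narrow $4$-rank is $0$ in all cases, so the ordinary $2$-class group of $\QQ(\sqrt{\rho qrs})$ is indeed of type $(2,2)$), while the paper's citation is shorter but leans on an unpublished-style external source. Two small inaccuracies in your sketch deserve correction, though neither is fatal. First, $\rho qrs$ is even only when $\rho=2$; in the case $\eta=2$, $\rho=1$ the radicand $qrs\equiv 3\pmod 4$ is odd, and $2$ ramifies because of the congruence class of the radicand, not parity — so $t=4$ still holds, but for a different reason. Second, $N(\varepsilon_{\rho qrs})=+1$ does not need Lemma \ref{etaqrs} (whose factorizations already presuppose positive norm); it follows at once from $q\equiv 3\pmod 4$ dividing the radicand, since $x^2+1\equiv 0\pmod q$ is insoluble. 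Finally, note that your "short" R\'edei computation must split into the subcases $q\equiv 3$ and $q\equiv 7\pmod 8$, because the matrix entries at the prime $2$ (e.g.\ the value of $\left(\frac{-2}{q}\right)$) depend on $q$ modulo $8$; the rank comes out equal to $3$ in both subcases and for both values of $\eta$, so the conclusion $h_2(\rho qrs)=4$ stands, but the proposal as written asserts rather than performs this verification.
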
	
	\begin{proof}
		As $h_2(2)=1$, the class number formula gives 	$h_2(F_1)=\frac{q(F_1)}{4} h_2(qrs)h_2({2qrs}).$
		We have    
		$h_2(qrs)h_2({2qrs})=4h_2({ \eta qrs})$ 
		(cf. \cite[Proposition 1 and 2]{BenjShn(22)}). Therefore,
		$h_2(F_1)=h_2(\eta qrs)$ if and only if $q(F_1)=1$. So the result by the previous corollary. 
	\end{proof}	
	
	\begin{remark}
		Let $ q\equiv     3\pmod 4$ and  $r\equiv s\equiv    5\pmod 8$   be  distinct   prime numbers such that $\left(\frac{q}{r}\right)=\left(\frac{q}{s}\right)= (-1)^{\delta_{\eta,2}}$ and 
		$\left(\frac{r}{s}\right)=1$. By Lemma \ref{etaqrs}, the condition $2^{\delta_{\eta,1}}q(\gamma-1)\not\in \NN^2$
		is equivalent to $2r(\gamma+(-1)^{\delta_{\eta,2}})$ or  $2s(\gamma+(-1)^{\delta_{\eta,2}})$    is  a square in $\NN$. 
	\end{remark}

	\begin{remark}\label{calcul} Keep the same hypothesis of Corollary \ref{corllaryh(F)=h(qrs)}.    According to \cite[Remark 2.3]{AzZektaous}, $\sqrt{\varepsilon_{2}\varepsilon_{rs}\varepsilon_{2rs}}$ takes one of the following forms :	
		\[
		\left \{
		\begin{array}{ccc}
			\sqrt{\varepsilon_{2}\varepsilon_{rs}\varepsilon_{2rs}} &=& A+B\sqrt{2}+C\sqrt{rs}+D\sqrt{2rs}, \qquad  \qquad \quad \quad (F1) \\
			& &\\
			\sqrt{\varepsilon_{2}\varepsilon_{rs}\varepsilon_{2rs}} &=& E\sqrt{r}+F\sqrt{s}+G\sqrt{2r}+H\sqrt{2s}, \qquad  \qquad \quad (F2) 
		\end{array}
		\right.
		\]
		
		where $A,B,...,H \in \QQ.$
	\end{remark} 	
	
	\begin{lemma}\label{lemmunitstriquad}
		Let $ q\equiv     3\pmod 4$ and  $r\equiv s\equiv    5\pmod 8$   be  distinct   prime numbers such that $\left(\frac{q}{r}\right)=\left(\frac{q}{s}\right)= (-1)^{\delta_{\eta,2}}$, 
		$\left(\frac{r}{s}\right)=1$. 	Put   $K_1=\QQ(\sqrt{2},\sqrt{q}, \sqrt{rs})$ and  $ \varepsilon_{\eta qrs}=\gamma+\gamma'\sqrt{\eta qrs}$. Assume that $2^{\delta_{\eta,1}}q(\gamma-1)$ is not a square in  $\NN$.
		We have:
		\begin{enumerate}[\rm1)]
			\item   If $N(\varepsilon_{rs})=1$, then	$$E_{K_1}=\langle -1, \varepsilon_2,\varepsilon_{2rs}, \varepsilon_{\eta qrs}, \sqrt{\varepsilon_q},\sqrt{\varepsilon_{2q}}, \sqrt{\varepsilon_{\rho qrs}},\sqrt{\varepsilon_{rs}\varepsilon_{\eta qrs}} \rangle.$$
			
			\item	If  $N(\varepsilon_{rs})=-1$  and $\sqrt{\varepsilon_2\varepsilon_{rs}\varepsilon_{2rs}}\in \mathbb{Q}(\sqrt{2},  \sqrt{rs})$,
			then  	$$E_{K_1}=\langle -1, \varepsilon_2,\varepsilon_{rs}, \varepsilon_{\eta qrs}, \sqrt{\varepsilon_q},\sqrt{\varepsilon_{2q}}, \sqrt{\varepsilon_{\rho qrs}},\sqrt{\varepsilon_{2}\varepsilon_{rs}\varepsilon_{2rs}} \rangle.$$
			
			\item	If  $N(\varepsilon_{rs})=-1$  and $\sqrt{\varepsilon_2\varepsilon_{rs}\varepsilon_{2rs}}\notin \mathbb{Q}(\sqrt{2},  \sqrt{rs})$,
			then 	$$E_{K_1}=\langle -1, \varepsilon_2,\varepsilon_{rs}, \varepsilon_{\eta qrs}, \sqrt{\varepsilon_q},\sqrt{\varepsilon_{2q}}, \sqrt{\varepsilon_{\rho qrs}},\sqrt{\varepsilon_{2}\varepsilon_{rs}\varepsilon_{2rs}\varepsilon_{\eta qrs}} \rangle.$$  
		\end{enumerate}
		Thus, $q(K_1)=2^4$.
	\end{lemma}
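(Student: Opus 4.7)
The strategy is to apply Wada's method (cf.\ page~\pageref{algo wada}) to the triquadratic field $K_1=\QQ(\sqrt{2},\sqrt{q},\sqrt{rs})$ by working through its seven biquadratic subfields. Since $K_1$ is totally real of degree $8$, Dirichlet's theorem gives $\mathrm{rank}\,E_{K_1}=7$, which matches the eight generators (including $-1$) listed in each case of the statement.

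First, I would collect the known fundamental systems of units of the biquadratic subfields of $K_1$. Under the hypotheses $q\equiv 3\pmod 4$ and $r\equiv s\equiv 5\pmod 8$ together with the stated quadratic-residue conditions, standard Kuroda-type computations from the literature give $\sqrt{\varepsilon_q},\sqrt{\varepsilon_{2q}}\in\QQ(\sqrt{2},\sqrt{q})$ and $\sqrt{\varepsilon_{\rho qrs}}\in\QQ(\sqrt{\rho q},\sqrt{rs})$. For the subfield $\QQ(\sqrt{2},\sqrt{rs})$ the analysis splits according to $N(\varepsilon_{rs})=\pm 1$, and in the $-1$ case further according to whether $\sqrt{\varepsilon_2\varepsilon_{rs}\varepsilon_{2rs}}\in\QQ(\sqrt{2},\sqrt{rs})$ or not (cf.\ Remark~\ref{calcul}).

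Second, I would apply Wada's identity at the level of $K_1$ with a well-chosen triple of involutions $\tau_1,\tau_2,\tau_3=\tau_1\tau_2$ in $\mathrm{Gal}(K_1/\QQ)$; this expresses every unit of $K_1$ modulo squares in terms of the biquadratic-level units and detects the extra square roots that live in $K_1$ but in no proper subfield. Here the decisive input is the hypothesis $2^{\delta_{\eta,1}}q(\gamma-1)\notin\NN^2$, which together with Lemma~\ref{etaqrs} pins down $\sqrt{\eta\varepsilon_{\eta qrs}}$ as belonging to $\QQ(\sqrt{\eta r},\sqrt{qs})$ or $\QQ(\sqrt{\eta s},\sqrt{qr})$. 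This is exactly what one needs in order to show that, in case $(1)$, $\sqrt{\varepsilon_{rs}\varepsilon_{\eta qrs}}\in K_1$; in case $(2)$ the Kuroda square root $\sqrt{\varepsilon_2\varepsilon_{rs}\varepsilon_{2rs}}$ is already inherited from the biquadratic layer; and in case $(3)$ multiplying this latter square root by $\varepsilon_{\eta qrs}$ is required to turn the product into a square in $K_1$.

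Finally, to obtain $q(K_1)=2^{4}$, I would verify that exactly four square-root generators, namely $\sqrt{\varepsilon_q}$, $\sqrt{\varepsilon_{2q}}$, $\sqrt{\varepsilon_{\rho qrs}}$ and the case-dependent one, contribute independent classes in $E_{K_1}/\prod_{i=1}^{7}E_{k_i}$, which yields $[E_{K_1}:\prod_i E_{k_i}]=2^{4}$. The principal obstacle will be the \emph{maximality} step: checking that no further product of the listed square-root generators is itself a square in $K_1$, and that no additional Wada-type square root is present. This amounts to verifying linear independence of the associated Kummer characters, which requires a careful quadratic-residue analysis combining the reciprocity hypotheses on $(q,r,s)$ with the explicit decompositions of $\sqrt{\eta\varepsilon_{\eta qrs}}$ given by Lemma~\ref{etaqrs}.
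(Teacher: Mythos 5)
Your proposal follows essentially the same route as the paper's proof: Wada's method applied to $K_1$ with the involution triple whose fixed fields are $\QQ(\sqrt{2},\sqrt{q})$, $\QQ(\sqrt{2},\sqrt{rs})$ and $\QQ(\sqrt{2},\sqrt{qrs})$, the same case split via $N(\varepsilon_{rs})$ and Remark \ref{calcul}, Lemma \ref{etaqrs} to locate the case-dependent square roots, and the count of four independent square-root generators yielding $q(K_1)=2^4$; the maximality step you defer is carried out in the paper by applying the partial norms $1+\tau$ down to $\QQ(\sqrt{q},\sqrt{rs})$, $\QQ(\sqrt{2q},\sqrt{rs})$ and $\QQ(\sqrt{2},\sqrt{q})$ to force the exponents in $\chi^2=\varepsilon_{2}^a\varepsilon_{rs}^b\varepsilon_{2rs}^c\varepsilon_{\eta qrs}^d\sqrt{\varepsilon_q}^e\sqrt{\varepsilon_{2q}}^f$ to vanish, which is exactly the Kummer-character independence you anticipate. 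Two minor slips that do not affect the argument: Wada's lemma uses three (not seven) subfields fixed by $\tau_1,\tau_2,\tau_1\tau_2$, and by Lemma \ref{etaqrs}-$2)$ one has $\sqrt{2\varepsilon_{\rho qrs}}=y_1\sqrt{\rho q}+y_2\sqrt{rs}$, so $\sqrt{\varepsilon_{\rho qrs}}$ lies in $K_1$ but not in $\QQ(\sqrt{\rho q},\sqrt{rs})$ itself.
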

	\begin{proof} 	Let $ q\equiv     3\pmod 4$ and  $r\equiv s\equiv    5\pmod 8$   be   prime numbers such that $\left(\frac{q}{r}\right)=\left(\frac{q}{s}\right)= (-1)^{\delta_{\eta,2}}$ and 
		$\left(\frac{r}{s}\right)=1$. We shall use the method of Wada  presented in Page  \pageref{algo wada}. Consider the three  biquadratic subfields of $K_1$ defined by
		$k_1=\mathbb{Q}(\sqrt{2},  \sqrt{q})$, $k_2=\mathbb{Q}(\sqrt{2},  \sqrt{rs})$ and $k_3=\mathbb{Q}(\sqrt{2},  \sqrt{qrs})$. 
		Let $\tau_1$, $\tau_2$ and $\tau_3$ be the elements of  $ \mathrm{Gal}(K_1/\QQ)$ defined by
		\begin{center}	\begin{tabular}{l l l }
				$\tau_1(\sqrt{2})=-\sqrt{2}$, \qquad & $\tau_1(\sqrt{q})=\sqrt{q}$, \qquad & $\tau_1(\sqrt{rs})=\sqrt{rs},$\\
				$\tau_2(\sqrt{2})=\sqrt{2}$, \qquad & $\tau_2(\sqrt{q})=-\sqrt{q}$, \qquad &  $\tau_2(\sqrt{rs})=\sqrt{rs},$\\
				$\tau_3(\sqrt{2})=\sqrt{2}$, \qquad &$\tau_3(\sqrt{q})=\sqrt{q}$, \qquad & $\tau_3(\sqrt{rs})=-\sqrt{rs}.$
			\end{tabular}
		\end{center} 
		Notice that  $\mathrm{Gal}(K_1/\QQ)=\langle \tau_1, \tau_2, \tau_3\rangle$
		and that the subfields  $k_1$, $k_2$ and $k_3$ are
		fixed by  $\langle \tau_3\rangle$, $\langle\tau_2\rangle$ and $\langle\tau_2\tau_3\rangle$ respectively. Therefore according to Wada's algorithm a fundamental system of units  of $K_1$ consists  of seven  units chosen from those of $k_1$, $k_2$ and $k_3$, and  from the square roots of the elements of $E_{k_1}E_{k_2}E_{k_3}$ which are squares in $K_1$.	  
		We have:  
		\begin{enumerate}[$\bullet$]
			\item A fundamental system of units  of 
			$k_1$ is $\{\varepsilon_2,\sqrt{\varepsilon_{q}}, \sqrt{\varepsilon_{2q}}\}$. In fact, we apply Wada’s algorithm and use the representations $ \sqrt{2\varepsilon_{q}}=c+d\sqrt{q}$ and $ \sqrt{2\varepsilon_{2q}}=u+v\sqrt{2q}$, where $c,d,u$ and $v$ are integers such that 
			$$2(-1)^\alpha=c^2- qd^2  \text{ and } 2(-1)^\alpha=u^2-2qv^2,$$  with $\alpha =1$ or $0$ according to whether $q\equiv 3$ or $7\pmod 8$  (cf.   \cite[Lemma 2.5]{chemseddineArith} and \cite[Lemma 3.2]{chemsmohahajjami}).
			
			\item If $N(\varepsilon_{rs})=1$, then a fundamental system of units of $k_2$ is $\{\varepsilon_2,\varepsilon_{rs}, \varepsilon_{2rs}\}$ (cf. \cite[Theorem 2.2.]{AzZektaous}).
			\item If $N(\varepsilon_{rs})=-1$, then a fundamental system of units of $k_2$ is $\{\varepsilon_2,\varepsilon_{rs}, \sqrt{\varepsilon_2\varepsilon_{rs}\varepsilon_{2rs}}\}$ or $\{\varepsilon_2,\varepsilon_{rs}, 
			\varepsilon_{2rs}\}$, according to ${\varepsilon_2\varepsilon_{rs}\varepsilon_{2rs}}$ is a square in $k_2$ or not (cf. \cite[Theorem 2.2.]{AzZektaous}).
			\item By Corollary \ref{unitqrs} a fundamental system of units of $k_3$ is  $\{\varepsilon_2,\varepsilon_{qrs}, \sqrt{\varepsilon_{qrs}\varepsilon_{2qrs}}\}$ or $\{\varepsilon_2,\varepsilon_{qrs}, \varepsilon_{2qrs}\}$ according to $2^{\delta_{\eta,1}}q(\gamma-1)$ is a square in $\NN$ or not. Here $\gamma$   and $\gamma'$  are the integers such that $ \varepsilon_{\eta qrs}=\gamma+\gamma'\sqrt{\eta qrs}$.
		\end{enumerate}
		Assume that $2^{\delta_{\eta,1}}q(\gamma-1)$ is not a square in $\NN$. So we have three cases to distinguish.
		\begin{enumerate}[$1)$]
			\item Assume that $N(\varepsilon_{rs})=1$. So 
			\begin{equation}
				E_{k_1}E_{k_2}E_{k_3}=\langle-1,  \varepsilon_{2}, \varepsilon_{rs},\varepsilon_{2rs}, \varepsilon_{qrs},\varepsilon_{2qrs} ,\sqrt{\varepsilon_{q}},\sqrt{\varepsilon_{2q}}\rangle.
			\end{equation}
			Notice that according to Lemma \ref{etaqrs} we have $\sqrt{\varepsilon_{\rho qrs}}\in K_1$. 
			Let us find the elements $\chi$   of $ K_1$ which are the  square root of an element of $E_{k_1}E_{k_2}E_{k_3}$. 
			Therefore, we can assume that		
			\begin{equation}\label{eq1}
				\chi^2=\varepsilon_{2}^a\varepsilon_{rs}^b \varepsilon_{2rs}^c\varepsilon_{\eta qrs}^d\sqrt{\varepsilon_{q}}^e\sqrt{\varepsilon_{2q}}^f,
			\end{equation}
			
			where $a, b, c, d, e$ and $f$ are in $\{0, 1\}$. We shall use the norm maps from $K$ to its subfields to eliminate  the equations  which do not occur.  
  	We note that the unit 	 $\varepsilon_{2rs}$ is known to have negative norm $($see for example   \cite[Corollary $3.6$,  p. 98]{AziTaouss}$)$. We have:
			
			\noindent\ding{229} By   applying the norm $N_{K_1/\QQ(\sqrt{q},\sqrt{rs})}=1+\tau_1$, we get:
			\begin{eqnarray*}
				N_{K_1/\QQ(\sqrt{q},\sqrt{rs})}(\chi^2)&=& (-1)^a \cdot\varepsilon_{rs}^{2b}\cdot (-1)^c\cdot ( \varepsilon_{\eta qrs})^{2d\delta_{\eta,1}}\cdot \varepsilon_{q}^e\cdot (-1)^{e}\cdot (-1)^{(\alpha+1)f }. 	\end{eqnarray*}
			Thus $a+c+ e +(\alpha+1)f\equiv 0\pmod2$. As $\varepsilon_q$ is not a square in $\QQ(\sqrt{q},\sqrt{rs})$, we get $e=0$. Then  $a+c+ (\alpha+1)f\equiv 0\pmod2$.
			Therefore, we have:
			\begin{equation*} 
				\chi^2=\varepsilon_{2}^a\varepsilon_{rs}^b \varepsilon_{2rs}^c\varepsilon_{\eta qrs}^d \sqrt{\varepsilon_{2q}}^f.
			\end{equation*}
			
			\noindent\ding{229} By   applying the norm $N_{K_1/\QQ(\sqrt{2q},\sqrt{rs})}=1+\tau_1\tau_2$, we get:
			\begin{eqnarray*}
				N_{K_1/\QQ(\sqrt{2q},\sqrt{rs})}(\chi^2)&=& (-1)^a \cdot\varepsilon_{rs}^{2b}\cdot (-1)^c\cdot ( \varepsilon_{\eta qrs})^{2d\delta_{\eta,2}}\cdot \varepsilon_{2q}^f\cdot (-1)^{f}.
			\end{eqnarray*}
			Then $a+c+ f\equiv 0\pmod2$. As $\varepsilon_{2q}$ is not a square in $\QQ(\sqrt{2q},\sqrt{rs})$, we get $f=0$ and so  $a=c.$ Thus, we have:
			\begin{equation*}
				\chi^2=\varepsilon_{2}^a\varepsilon_{rs}^b \varepsilon_{2rs}^a\varepsilon_{\eta qrs}^d.
			\end{equation*}
			\noindent\ding{229} By   applying the norm $N_{K_1/\QQ(\sqrt{2},\sqrt{q})}=1+\tau_3$, we get:
			\begin{eqnarray*}
				N_{K_1/\QQ(\sqrt{2},\sqrt{q})}(\chi^2)&=& \varepsilon_2^{2a} \cdot (-1)^a\cdot 1.
			\end{eqnarray*}
			Then $a=0$. Therefore $\chi^2= \varepsilon_{rs}^b \varepsilon_{\eta qrs}^d$. Notice that  $\sqrt{\varepsilon_{rs}}  \notin K_1$, in fact we deduce from \cite[Proof of Proposition 2.5]{AzZektaous} that
			 	$\sqrt{\varepsilon_{rs}}=\alpha_1\sqrt{r}+\alpha_2\sqrt{s}	$ for some integers $\alpha_1$ and $\alpha_2$. Moreover, we have
			$  \sqrt{\varepsilon_{\eta qrs}} \notin K_1$  and  $\sqrt{\varepsilon_{rs}}\sqrt{\varepsilon_{\eta qrs}}\in K_1$  (cf. Lemma \ref{etaqrs}). Thus,   the only squarefree elements  in $E_{k_1}E_{k_2}E_{k_3}$  which are squares in $K_1$ are ${\varepsilon_{rs}}{\varepsilon_{\eta qrs}}$ and ${\varepsilon_{\rho qrs}}$. Hence, 
		 	$$E_{K_1}=\langle -1, \varepsilon_2,\varepsilon_{2rs}, \varepsilon_{\eta qrs}, \sqrt{\varepsilon_q},\sqrt{\varepsilon_{2q}}, \sqrt{\varepsilon_{\rho qrs}},\sqrt{\varepsilon_{rs}\varepsilon_{\eta qrs}} \rangle.$$

			\item Assume that  $N(\varepsilon_{rs})=-1$ and $\sqrt{\varepsilon_2\varepsilon_{rs}\varepsilon_{2rs}}\in k_2$.
			Then 
			\begin{eqnarray} E_{k_1}E_{k_2}E_{k_3}=\langle-1,  \varepsilon_{2}, \varepsilon_{rs},\varepsilon_{qrs},\varepsilon_{2qrs},\sqrt{\varepsilon_{q}},\sqrt{\varepsilon_{2q}}, \sqrt{\varepsilon_{2}\varepsilon_{rs}\varepsilon_{2rs}  }\rangle.\end{eqnarray}
			
			As above, we   consider $\chi$ an element of $K_1$ of the form:
			$$\chi^2=\varepsilon_{2}^a\varepsilon_{rs}^b \varepsilon_{\eta qrs}^c\sqrt{\varepsilon_{q}}^d\sqrt{\varepsilon_{2q}}^e
			\sqrt{\varepsilon_{2}\varepsilon_{rs}\varepsilon_{2rs}}^f,$$
			where $a, b, c, d, e$ and $f$ are in $\{0, 1\}$.

			\noindent\ding{229} Let us start by applying   the norm $N_{K_1/\QQ(\sqrt{q},\sqrt{rs})}=1+\tau_1$. Notice that
			$N_{K_1/\QQ(\sqrt{q},\sqrt{rs})}(\sqrt{\varepsilon_{2}\varepsilon_{rs}\varepsilon_{2rs}})=(-1)^{a_1}  \varepsilon_{rs}$, for some $a_1\in \{0,1\}$.
			 We have:
			\begin{eqnarray*}
				N_{K_1/\QQ(\sqrt{q},\sqrt{rs})}(\chi^2)&=& (-1)^a \cdot\varepsilon_{rs}^{2b}\cdot (\varepsilon_{\eta qrs})^{2c\delta_{\eta,1}}\cdot (-1)^{d}\varepsilon_{q}^{d}\cdot (-1)^{(\alpha+1)e }\cdot \varepsilon_{rs}^f\cdot (-1)^{a_1 f}. 	\end{eqnarray*}
			Thus $a+d +(\alpha+1)e+a_1 f\equiv 0\pmod2$. As $\varepsilon_q$  and $\varepsilon_{rs}$ are not  squares in $\QQ(\sqrt{q},\sqrt{rs}),$ we have $d=f$. Therefore,
			$$\chi^2=\varepsilon_{2}^a\varepsilon_{rs}^b \varepsilon_{\eta qrs}^c\sqrt{\varepsilon_{q}}^d\sqrt{\varepsilon_{2q}}^e
			\sqrt{\varepsilon_{2}\varepsilon_{rs}\varepsilon_{2rs}}^d.$$
			
			\noindent\ding{229} Now let us apply the norm $N_{K_1/\QQ(\sqrt{2},\sqrt{q})}=1+\tau_3$.
			 As
			$N_{K_1/\QQ(\sqrt{2},\sqrt{q})}(\sqrt{\varepsilon_{2}\varepsilon_{rs}\varepsilon_{2rs}})=(-1)^{a_2}  \varepsilon_{2}$ with $a_2\in \{0,1\}$, 
			 we have:
			\begin{eqnarray*}
				N_{K_1/\QQ(\sqrt{2},\sqrt{q})}(\chi^2)&=& \varepsilon_2^{2a} \cdot (-1)^b \cdot \varepsilon_{q}^{d}\cdot \varepsilon_{2q}^{e}\cdot \varepsilon_{2}^{d}\cdot (-1)^{a_2 d}.
			\end{eqnarray*} 
			Thus, $b +a_2 d\equiv 0\pmod2$.          
			As $\varepsilon_2$ is not a square in $\QQ(\sqrt{2},\sqrt{q})$, we get $d=0$ and so $b=0$. Therefore,
			$$\chi^2=\varepsilon_{2}^a  \varepsilon_{\eta qrs}^c \sqrt{\varepsilon_{2q}}^e  .$$
			
			\noindent\ding{229} Finally by applying the norm $N_{K_1/\QQ(\sqrt{2q},\sqrt{rs})}=1+\tau_1\tau_2$, we get:
			\begin{eqnarray*}
				N_{K_1/\QQ(\sqrt{2q},\sqrt{rs})}(\chi^2)&=& (-1)^{a} \cdot (\varepsilon_{\eta qrs})^{2a\delta_{\eta,2}} \cdot \varepsilon_{2q}^{e}\cdot (-1)^{e}.
			\end{eqnarray*}
			
			Thus, $a +e\equiv 0\pmod2$.	As $\varepsilon_{2q}$ is not a square in $\QQ(\sqrt{2q},\sqrt{rs})$, we have $e=0$ and so $a=0$. Finally, $ {\varepsilon_{\rho qrs}}$ is  the only squarefree element in $E_{k_1}E_{k_2}E_{k_3}$ which is a square in $K_1$. Then 
			$$E_{K_1}=\langle -1, \varepsilon_2,\varepsilon_{rs}, \varepsilon_{\eta qrs}, \sqrt{\varepsilon_q},\sqrt{\varepsilon_{2q}}, \sqrt{\varepsilon_{\rho qrs}},\sqrt{\varepsilon_{2}\varepsilon_{rs}\varepsilon_{2rs}} \rangle.$$

			\item Assume that  $N(\varepsilon_{rs})=-1$ and $\sqrt{\varepsilon_2\varepsilon_{rs}\varepsilon_{2rs}}\notin k_2$.
			Then 
			\begin{eqnarray} E_{k_1}E_{k_2}E_{k_3}=\langle-1,  \varepsilon_{2}, \varepsilon_{rs},\varepsilon_{2rs},\varepsilon_{qrs},\varepsilon_{2qrs},\sqrt{\varepsilon_{q}},\sqrt{\varepsilon_{2q}  }\rangle.	\end{eqnarray}
			As above, we   consider $\chi$ an element of $ K_1$ of the form:
			$$\chi^2=\varepsilon_{2}^a\varepsilon_{rs}^b \varepsilon_{2rs}^c\varepsilon_{\eta qrs}^d\sqrt{\varepsilon_{q}}^e\sqrt{\varepsilon_{2q}}^f
			,$$
			where $a, b, c, d, e$ and $f$ are in $\{0, 1\}$.\\
			\noindent\ding{229} Let us start by   applying the norm $N_{K_1/\QQ(\sqrt{q},\sqrt{rs})}=1+\tau_1$. We have:
			\begin{eqnarray*}
				N_{K_1/\QQ(\sqrt{q},\sqrt{rs})}(\chi^2)&=& (-1)^a \cdot\varepsilon_{rs}^{2b}\cdot (-1)^{c}\cdot (\varepsilon_{\eta qrs})^{2d\delta_{\eta,1}}\cdot\varepsilon_{q}^{e}\cdot (-1)^{ e }\cdot  (-1)^{(\alpha+1)f}. 	\end{eqnarray*}
			Thus $a+c +e+(\alpha+1)f\equiv 0\pmod2$. As $\varepsilon_q$ is not a   square in $\QQ(\sqrt{q},\sqrt{rs})$, we get $e=0$. Therefore, $a+c  +(\alpha+1)f\equiv 0\pmod2$ and 
			$$\chi^2=\varepsilon_{2}^a\varepsilon_{rs}^b \varepsilon_{2rs}^c\varepsilon_{\eta qrs}^d\sqrt{\varepsilon_{2q}}^f
			.$$

			\noindent\ding{229} By   applying the norm $N_{K_1/\QQ(\sqrt{2q},\sqrt{rs})}=1+\tau_1\tau_2$, we get:
			\begin{eqnarray*}
				N_{K_1/\QQ(\sqrt{2q},\sqrt{rs})}(\chi^2)&=& (-1)^a \cdot\varepsilon_{rs}^{2b}\cdot (-1)^{c}\cdot (\varepsilon_{\eta qrs})^{2d\delta_{\eta,2}}\cdot \varepsilon_{2q}^{f}\cdot (-1)^{f }. 	\end{eqnarray*}
			Thus $f=0$. Therefore $a=c$ and we have
			$$\chi^2=\varepsilon_{2}^a\varepsilon_{rs}^b \varepsilon_{2rs}^a\varepsilon_{\eta qrs}^d .$$
			
			\noindent\ding{229} Now let us apply the norm $N_{K_1/\QQ(\sqrt{2},\sqrt{q})}=1+\tau_3$. We have:
			\begin{eqnarray*}
				N_{K_1/\QQ(\sqrt{2},\sqrt{q})}(\chi^2)&=& \varepsilon_2^{2a}\cdot (-1)^{b}\cdot (-1)^{a}. 	\end{eqnarray*}
			Thus $b=a$  and so we   have: 
			$$\chi^2=(\varepsilon_{2}\varepsilon_{rs} \varepsilon_{2rs})^a\varepsilon_{\eta qrs}^d
			.$$
			
		Since	$\sqrt{\varepsilon_{2}\varepsilon_{rs}\varepsilon_{2rs}}\not\in k_2$,  the case      $(F2)$ in Remark \ref{calcul} implies that 
			  $\sqrt{\varepsilon_{2}\varepsilon_{rs}\varepsilon_{2rs}}\notin K_1$ and $\sqrt{\varepsilon_{\eta qrs}}\sqrt{\varepsilon_{2}\varepsilon_{rs}\varepsilon_{2rs}}\in K_1$. Hence, 
			$$E_{K_1}=\langle -1, \varepsilon_2,\varepsilon_{rs}, \varepsilon_{\eta qrs}, \sqrt{\varepsilon_q},\sqrt{\varepsilon_{2q}}, \sqrt{\varepsilon_{\rho qrs}},\sqrt{\varepsilon_{2}\varepsilon_{rs}\varepsilon_{2rs}\varepsilon_{\eta qrs}} \rangle.$$  
		\end{enumerate}
		This completes the proof.
	\end{proof}

		\begin{corollary}\label{etacorolaK}
		Let $ q\equiv     3\pmod 4$ and  $r\equiv s\equiv    5\pmod 8$   be  distinct  prime numbers such that $\left(\frac{q}{r}\right)=\left(\frac{q}{s}\right)= (-1)^{\delta_{\eta,2}}$ and
	 	$\left(\frac{r}{s}\right)=1$. 	Put   $K=\QQ(\sqrt{\eta q},\sqrt{rs})$ and   $ \varepsilon_{\eta qrs}=\gamma+\gamma'\sqrt{\eta qrs}$. Then,  $  h_2(K)   =\frac{1}{2} h_2(rs)h_2(\eta qrs) $ or $\frac{1}{4} h_2(rs)h_2(\eta qrs)$. Moreover, we have:

		\begin{enumerate}[$1)$]
			\item     $   h_2(K) =\frac{1}{2} h_2(rs)h_2(\eta qrs) $ if and only if 
			$N(\varepsilon_{rs})=1$. 
			
			\item Assuming   
			that  $2^{\delta_{\eta,1}}q(\gamma-1)$   is not a square in $\NN$ and $N(\varepsilon_{rs})=1$, we get: 
			$$ |A(K_\infty)|=| A (K)| =\frac{1}{2} h_2(rs)h_2(\eta qrs).$$
		\end{enumerate}
	\end{corollary}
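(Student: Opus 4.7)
The plan is to combine Kuroda's class number formula (Lemma \ref{wada's f.}) with Wada's unit-index method and Fukuda's stabilization theorem (Lemma \ref{lm fukuda}).

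First I establish the general formula. Applying Kuroda to the real biquadratic field $K = \QQ(\sqrt{\eta q},\sqrt{rs})$ with quadratic subfields $\QQ(\sqrt{\eta q})$, $\QQ(\sqrt{rs})$, $\QQ(\sqrt{\eta qrs})$, I get
\[
h_2(K) = \tfrac{1}{4}\,q(K)\,h_2(\eta q)\,h_2(rs)\,h_2(\eta qrs).
\]
Since $q \equiv 3\pmod 4$, the Pell equation $x^2 - \eta q\,y^2 = -1$ is unsolvable modulo $q$ (as $\left(\frac{-1}{q}\right)=-1$), so $N(\varepsilon_{\eta q})=1$ and $-1 \notin N_{\QQ(\sqrt{\eta q})/\QQ}(k^*)$; the ambiguous class number formula (Lemma \ref{AmbiguousClassNumberFormula}) then gives $h_2(\eta q)=1$. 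Hence $h_2(K) = \tfrac{1}{4}q(K)\,h_2(rs)\,h_2(\eta qrs)$, and the problem reduces to pinning down $q(K) \in \{1,2\}$. I run Wada's method (cf. page \pageref{algo wada}) on the three quadratic subfields, exactly as in the proof of Lemma \ref{lemmunitstriquad} but in a lighter setting: iterated norms from $K$ eliminate every candidate except $\sqrt{\varepsilon_{rs}\varepsilon_{\eta qrs}}$. By Lemma \ref{etaqrs} (combined with Lemma \ref{lem2}), this square root lies in $K$ if and only if $\varepsilon_{rs}\varepsilon_{\eta qrs}$ is a square in $K$, which happens if and only if $N(\varepsilon_{rs})=1$. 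This yields item 1).

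For item 2), under the extra hypothesis $N(\varepsilon_{rs})=1$ one has $h_2(K) = \tfrac12 h_2(rs)h_2(\eta qrs)$, and I must show $|A(K)|=|A_\infty(K)|$. By Fukuda's theorem (Lemma \ref{lm fukuda}), it is enough to verify $h_2(K_1) = h_2(K)$, where $K_1 = K(\sqrt{2}) = \QQ(\sqrt{2},\sqrt{q},\sqrt{rs})$ is the first layer of the cyclotomic $\ZZ_2$-extension (the unique prime above $2$ is totally ramified in $K_\infty/K_1$, so Fukuda applies from $n_0=1$). Applying Kuroda's formula to the triquadratic $K_1$ (so $n=3$, $v=9$) and inserting $q(K_1) = 2^4$ from Lemma \ref{lemmunitstriquad} (case $1)$, since $N(\varepsilon_{rs})=1$ and $2^{\delta_{\eta,1}}q(\gamma-1)\notin\NN^2$), together with the genus-theoretic vanishings $h_2(2)=h_2(q)=h_2(2q)=1$, I obtain
\[
h_2(K_1) = \tfrac{1}{32}\,h_2(rs)\,h_2(2rs)\,h_2(qrs)\,h_2(2qrs).
\]
Using the identity $h_2(qrs)\,h_2(2qrs) = 4\,h_2(\eta qrs)$ from the proof of Corollary \ref{corllaryh(F)=h(qrs)}, together with the analogous identity $h_2(rs)\,h_2(2rs) = 4\,h_2(rs)$ coming from Kuroda applied to $\QQ(\sqrt{2},\sqrt{rs})$ under $N(\varepsilon_{rs})=1$, the two factors $4$ cancel the $\tfrac{1}{32}$ down to $\tfrac12$, yielding $h_2(K_1) = \tfrac12 h_2(rs)\,h_2(\eta qrs) = h_2(K)$, as required.

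The main obstacle is the clean separation, in Wada's method, between square roots that actually descend to $K$ and those that live only in the triquadratic $K_1$. Tracking Galois conjugates of $\sqrt{\varepsilon_q}$ and $\sqrt{\varepsilon_{2q}}$ and applying the explicit decompositions of Lemma \ref{etaqrs} is delicate; once these signs are handled, the arithmetic collapse $h_2(K_1)=h_2(K)$ is a one-line check.
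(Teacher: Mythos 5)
Your route is essentially the paper's: Kuroda's formula for $K$ together with $h_2(\eta q)=1$ (your derivation via the ambiguous class number formula is a harmless variant of the paper's citation of \cite{connor88}), the unit index $q(K)\in\{1,2\}$ decided by $N(\varepsilon_{rs})$, then Kuroda on $K_1$ with $q(K_1)=2^4$ from Lemma \ref{lemmunitstriquad} and Fukuda's stabilization. However, two steps need repair. In the Wada analysis for item 1), you claim the norm eliminations leave only the candidate $\sqrt{\varepsilon_{rs}\varepsilon_{\eta qrs}}$, and that it lies in $K$ if and only if $N(\varepsilon_{rs})=1$. This overlooks the branch of Lemma \ref{etaqrs}-1)a): when $2^{\delta_{\eta,1}}q(\gamma-1)\in\NN^2$ (which item 1) does not exclude), $\sqrt{\varepsilon_{\eta qrs}}$ itself lies in $K$, and then $\sqrt{\varepsilon_{rs}\varepsilon_{\eta qrs}}\in K$ would force $\sqrt{\varepsilon_{rs}}\in K$, which is false; no norm computation can eliminate the candidate $\varepsilon_{\eta qrs}$, since it genuinely is a square in $K$ in that branch. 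The paper instead records $E_K=\langle -1,\varepsilon_q,\varepsilon_{rs},\sqrt{\varepsilon_{\eta qrs}}\ \text{or}\ \sqrt{\varepsilon_{rs}\varepsilon_{\eta qrs}}\rangle$ according to whether $2^{\delta_{\eta,1}}q(\gamma-1)\in\NN^2$ or not; either way $q(K)=2$, so the conclusion of item 1) survives, but your stated ``iff'' is not what the elimination yields.

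The real gap is in item 2): your identity $h_2(rs)\,h_2(2rs)=4\,h_2(rs)$, i.e.\ $h_2(2rs)=4$, does not follow from Kuroda applied to $\QQ(\sqrt{2},\sqrt{rs})$. Kuroda there gives $h_2\bigl(\QQ(\sqrt{2},\sqrt{rs})\bigr)=\frac{1}{4}\,q\bigl(\QQ(\sqrt{2},\sqrt{rs})\bigr)\,h_2(rs)\,h_2(2rs)$, which merely relates $h_2(2rs)$ to the (not yet computed) class number and unit index of that biquadratic field; the hypothesis $N(\varepsilon_{rs})=1$ alone cannot produce the value $4$. The equality $h_2(2rs)=4$ is a genus-theoretic/R\'edei-type fact, and the paper obtains it by citing \cite[Proposition $A_2$, p.~330]{kaplan76}. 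Once that citation replaces your Kuroda argument, your computation $h_2(K_1)=\frac{1}{32}\,h_2(rs)\,h_2(2rs)\,h_2(qrs)\,h_2(2qrs)=\frac{1}{2}\,h_2(rs)\,h_2(\eta qrs)$ coincides with the paper's, using $h_2(qrs)h_2(2qrs)=4h_2(\eta qrs)$ exactly as in the proof of Corollary \ref{corllaryh(F)=h(qrs)}. (A shared wrinkle, not a deviation: with $n_0=1$, Lemma \ref{lm fukuda} stabilizes from an equality at layers $n\geq 1$, so strictly one should compare $h_2(K_1)$ with $h_2(K_2)$ or justify $n_0=0$; the paper applies Fukuda from $h_2(K)=h_2(K_1)$ just as you do.)
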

	\begin{proof}Let $K=\QQ(\sqrt{\eta q},\sqrt{rs})$, where $ q\equiv     3\pmod 4$ and  $r\equiv s\equiv    5\pmod 8$  are distinct  prime numbers such that $\left(\frac{q}{r}\right)=\left(\frac{q}{s}\right)= (-1)^{\delta_{\eta,2}}$ and 
		$\left(\frac{r}{s}\right)=1$.  As  $h_2(\eta q)=1$ (cf. \cite[Corollary 18.4]{connor88}), we have
		$$h_2(K)=\frac{1}{4}q(K) h_2(rs)h_2(\eta qrs).$$
	
	Notice that if  $N(\varepsilon_{rs})=1$, then, as mentioned in the proof of   Lemma \ref{lemmunitstriquad},  
	$\sqrt{\varepsilon_{rs}}=\alpha_1\sqrt{s}+\alpha_2\sqrt{r}	$ for some integers $\alpha_1$ and $\alpha_2$.
	So by	using  Lemma \ref{etaqrs}-$1)$, we  get	$E_{K}=\langle -1, \varepsilon_{\eta q},\varepsilon_{rs},   \sqrt{\varepsilon_{\eta qrs}} \text{ or }    \sqrt{\varepsilon_{rs}\varepsilon_{\eta qrs}} \rangle$ 
	according to whether $2^{\delta_{\eta,1}}q(\gamma-1) \in \NN^2$ or not. Thus,  $q(K)=2$.

	On the other hand, if $N(\varepsilon_{rs})=-1$, then by	using  Lemma \ref{etaqrs}-$1)$, we  get	$E_{K}=\langle -1, \varepsilon_{\eta q},\varepsilon_{rs},    \varepsilon_{\eta qrs}  \rangle$. 
	Thus,  $q(K)=1$. So we have the first part of the result.

	Now	assume   that $2^{\delta_{\eta,1}}q(\gamma-1)$   is not a square in $\NN$ and $N(\varepsilon_{rs})=1$. By class number formula, applied to $K_1$,    we get:
		\begin{eqnarray*} 
			h_2(K_1)&=&\frac{1}{2^{9}}q(K_1)  h_2(q) h_2(2q )h_2(rs) h_2(2rs)h(2)  h_2(qrs) h_2(2qrs)= \frac{1}{2}h_2(rs)h_2(\eta qrs) \nonumber 
		\end{eqnarray*}
		In fact,   $q(K_1)=2^4$ (cf.  Lemma \ref{lemmunitstriquad}), $h_2(q) =h_2(2q )=h(2) =1$ (cf. \cite[Corollary 18.4]{connor88}), $h_2(2rs)=4 $ (cf. \cite[Proposition $A_2$, p. 330]{kaplan76}) and $h_2(qrs)h_2({2qrs})=4h_2({ \eta qrs})$ (cf. the proof of Corollary \ref{corllaryh(F)=h(qrs)}).
		
		Since $K_1/K$ is ramified,   Fukuda's theorem implies that   $h_2(K_n)=\frac{1}{2} h_2(rs)h_2(\eta qrs)$ for all $n\geq 0$. This gives the result.
	\end{proof}

	\bigskip
	
	Notice that the condition  $N(\varepsilon_{rs})=1$    implies that  $\left(\frac{r}{s}\right)_4\not=\left(\frac{s}{r}\right)_4$ or 
	$\left(\frac{r}{s}\right)_4 =\left(\frac{s}{r}\right)_4=1$. Moreover, the condition $\left(\frac{r}{s}\right)_4\not=\left(\frac{s}{r}\right)_4$ implies that $N(\varepsilon_{rs})=1$ (cf. \cite[Theorem 1]{kuvcera1995parity}).
	We have:
	
	\bigskip

	\begin{theorem}\label{etatheoremmetacyclic-nonmodular}
		Let $ q\equiv     3\pmod 4$ and  $r\equiv s\equiv    5\pmod 8$   be  distinct   prime numbers such that $\left(\frac{q}{r}\right)=\left(\frac{q}{s}\right)= (-1)^{\delta_{\eta,2}}$, 
		$\left(\frac{r}{s}\right)=1$    and  $N(\varepsilon_{rs})=1$.  Put $F=\QQ(\sqrt{\eta qrs})$ and 	$ \varepsilon_{\eta qrs}=\gamma+\gamma'\sqrt{\eta qrs}$.
		Assume  that $2^{\delta_{\eta,1}}q(\gamma-1)$    is not a square  in $\NN$. 
		Then, we have: $$A(F)\simeq A(F_\infty)\simeq \ZZ/2\ZZ\times \ZZ/2^m\ZZ,$$
		where $m$  is such that $h_2(\eta qrs)=2^{m+1}$. Furthermore, the Galois group
		$\mathrm{Gal}(\mathcal{L}(F_\infty)/F_\infty)$,  where $\mathcal{L}(F_\infty)$ denotes the maximal unramified pro-$2$-extension  of $F_\infty$, is   
		a   {\bf metacyclic-nonmodular pro-$2$-group of Type 1 with $\alpha=2$} if and only if $\left(\frac{r}{s}\right)_4\not=\left(\frac{s}{r}\right)_4$.
		
	\end{theorem}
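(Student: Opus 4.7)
The plan is to split the proof into two parts: first, establish the isomorphism $A(F) \simeq \ZZ/2\ZZ \times \ZZ/2^m\ZZ$ and propagate it to $A(F_\infty)$; second, analyze the pro-$2$ group $G := \mathrm{Gal}(\mathcal{L}(F_\infty)/F_\infty)$ by computing the $2$-class groups of the three unramified quadratic extensions of $F_\infty$, invoking Lemma \ref{metacyclic-nonmodular} and matching against Table \ref{tab1}.

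For the first part, I apply the ambiguous class number formula (Lemma \ref{AmbiguousClassNumberFormula}) to $F/\QQ$: in both cases $\eta=1,2$ the ramified primes are exactly $\{2,q,r,s\}$ so $t=4$; since $q\equiv 3\pmod 4$ divides the radicand, $N(\varepsilon_F)=+1$ and a local Hilbert-symbol computation at $q$ (where $-1$ is a non-residue) shows $-1\notin N_{F/\QQ}(F^\ast)$, whence $e=1$ and $\mathrm{rank}_2 A(F) = 4-1-1 = 2$. Combined with the hypothesis $|A(F)| = h_2(\eta qrs) = 2^{m+1}$, we get $A(F)\simeq \ZZ/2^a\ZZ\times \ZZ/2^b\ZZ$ with $a+b=m+1$, $1\le a\le b$. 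To force $a=1$, I would carry out a Rédei-type $4$-rank computation for $F$: the splitting data of the four ramified primes gives a Rédei matrix of rank $1$, so the $4$-rank of $A(F)$ is $1$. Stability then follows immediately from Corollary \ref{corllaryh(F)=h(qrs)} and Fukuda's theorem (Lemma \ref{lm fukuda}): the hypothesis $2^{\delta_{\eta,1}}q(\gamma-1)\notin \NN^2$ gives $h_2(F_1)=h_2(F)$, and since the unique prime of $F$ above $2$ is totally ramified in $F_\infty/F$, Fukuda propagates this to $h_2(F_n)=h_2(F)$ for every $n$, yielding $A(F_\infty)\simeq A(F)$ via the norms.

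For the second part, the three unramified quadratic subextensions of $F_\infty$ are
\[
K^{(1)}_\infty = \QQ(\sqrt{r},\sqrt{\eta qs})_\infty,\quad K^{(2)}_\infty=\QQ(\sqrt{s},\sqrt{\eta qr})_\infty,\quad K^{(3)}_\infty = \QQ(\sqrt{\eta q},\sqrt{rs})_\infty.
\]
Mimicking the proof of Corollary \ref{etacorolaK} (Wada's method to compute unit indices, Kuroda's class number formula, and Fukuda stability) for each $K^{(i)}$, I obtain $A(K^{(1)}_\infty)\simeq A(K^{(2)}_\infty)\simeq \ZZ/2\ZZ\times\ZZ/2^m\ZZ$ unconditionally, while
\[
A(K^{(3)}_\infty) \simeq \ZZ/4\ZZ\times\ZZ/2^{m-1}\ZZ \iff \left(\tfrac{r}{s}\right)_4 \neq \left(\tfrac{s}{r}\right)_4,
\]
with $A(K^{(3)}_\infty)$ of strictly larger $2$-order in the complementary case (since then $h_2(rs)\ge 4$ by Scholz--Rédei, forcing $|A(K^{(3)}_\infty)|\ge 2^{m+2}$). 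In either case all three $A(K^{(i)}_\infty)$ have $2$-rank $2$, so Lemma \ref{metacyclic-nonmodular} gives that $G$ is metacyclic-nonmodular. By class field theory $A(K^{(i)}_\infty)\simeq H^{ab}_{i,2}$; matching with Table \ref{tab1}, the pattern $(2,2^m),(2,2^m),(4,2^{m-1})$ coincides \emph{exactly} with Type $1$ and $\alpha=2$ (the row has $|G'|=2$), whereas the complementary case produces an $H^{ab}_{3,2}$ of the form $(2^\alpha,2^{m-1})$ with $\alpha\ge 3$, matching a Type $1$ or $3$ row with larger $\alpha$, never Type $1$ with $\alpha=2$. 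This establishes both directions of the equivalence.

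The main obstacle is the fine structural determination of $A(K^{(3)}_\infty)$: distinguishing $(4,2^{m-1})$ from a structure with a larger cyclic factor requires more than the cardinality formula of Corollary \ref{etacorolaK}. The key lemma to prove here is that the rational biquadratic-residue pair $\left((r/s)_4,(s/r)_4\right)$, which controls the $4$-rank of $A(\QQ(\sqrt{rs}))$ via Scholz--Rédei reciprocity, is transferred unchanged to the $4$-rank of $A(K^{(3)}_\infty)$ through the Wada decomposition of $\sqrt{\varepsilon_{rs}}$ and $\sqrt{\varepsilon_{\eta qrs}}$ carried out in the style of Lemma \ref{lemmunitstriquad}.
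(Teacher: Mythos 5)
Your overall skeleton --- stabilize $h_2(F_n)$ via Corollary \ref{corllaryh(F)=h(qrs)} and Fukuda, show the three unramified quadratic extensions of $F_n$ have $2$-class groups of rank $2$, invoke Lemma \ref{metacyclic-nonmodular}, and decide the type against Table \ref{tab1} using the order formula of Corollary \ref{etacorolaK} together with Ku\v{c}era's dichotomy --- is indeed the paper's. But two steps fail as written. The main one is the direction of your pattern matching: you feed the \emph{fine structures} $A(K^{(i)}_\infty)$ into Table \ref{tab1} as inputs, yet your tools only deliver orders. ``Mimicking Corollary \ref{etacorolaK}'' (Wada unit indices, Kuroda's formula, Fukuda) yields $|A(K^{(3)}_n)|=\frac{1}{2}h_2(rs)h_2(\eta qrs)$ and nothing finer; order $2^{m+1}$ with $2$-rank $2$ does not separate $(4,2^{m-1})$ from $(2,2^m)$, which is exactly why your ``key lemma'' transferring $\left(\frac{r}{s}\right)_4,\left(\frac{s}{r}\right)_4$ to the $4$-rank of $A(K^{(3)}_\infty)$ is left unproven --- and it is the whole content of the forward implication in your architecture. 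The paper runs the logic in the opposite direction, and this closes your gap without the key lemma: once metacyclic-nonmodularity is established (rank $\geq 2$ for $K'_n,K''_n$ from the ambiguous class number formula over $\QQ(\sqrt r)$ with odd class number plus persistence under the ramified extension $K'_n/K'$; rank $\leq 2$ from Lemma \ref{metacyclic-nonmodular}-$1)$; rank $2$ for $K_n$ from \cite[main theorem]{ChemseddinGreenbergConjectureI}), every row of Table \ref{tab1} with $\alpha\geq3$ forces $|H^{ab}_{32}|\geq 2^{m+2}$, while the Type 1, $\alpha=2$ row has $|H^{ab}_{32}|=2^{m+1}=|G^{ab}|$. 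So the single order comparison $h_2(K_n)$ versus $h_2(F_n)$ --- which you do possess --- decides both directions, and the structures $(4,2^{m-1})$, $(2,2^m)$ are then read \emph{off} the table as consequences. Note also that this matching is only anchored if you know which extension plays the role of $k_{3,2}$: the paper needs \cite[Lemma 5.5]{AziziRezzouguiZekhniniPeriodica} (i.e.\ $F_{2,3}=K=\QQ(\sqrt{\eta q},\sqrt{rs})$, the quadratic extension lying under all three unramified quartic extensions) precisely for this, and your proposal silently assumes it.

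The second gap is your base-field $4$-rank computation. With $t=4$ ramified primes the narrow $2$-rank of $F$ is $3$ and R\'edei's formula gives narrow $4$-rank $=3-\mathrm{rank}_{\mathbb{F}_2}(R)$, so a R\'edei matrix ``of rank $1$'' would yield $4$-rank $2$, not $1$; moreover, since $q\equiv 3\pmod 4$ divides the radicand we have $N(\varepsilon_{\eta qrs})=1$, so the narrow and ordinary class groups differ and the passage between their $4$-ranks is exactly the delicate point your sketch skips. The paper avoids this entirely by quoting \cite[Theorem 5.9]{AziziRezzouguiZekhniniPeriodica}, which gives $4$-rank equal to $1$ at \emph{every} layer $F_n$ under the stated splitting hypotheses. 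Your propagation mechanism (Fukuda stability plus surjectivity of norm maps in the totally ramified tower, so that constant order forces $A(F_\infty)\simeq A(F)$) is sound and is a legitimate alternative to invoking the layerwise statement, but it has nothing to propagate until the base $4$-rank is actually proved, and as written your R\'edei step is both numerically inconsistent and unjustified.
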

	\begin{proof}Put $K=\QQ(\sqrt{\eta q},\sqrt{rs})$, $K'=\QQ(\sqrt{r},\sqrt{\eta qs})$ and $K''=\QQ(\sqrt{s},\sqrt{\eta qr})$. Let $F_n$ (resp. $K_n$, $K_n'$ and $K_n''$)
		be the $n$th layer of the cyclotomic $\ZZ_2$-extension of $F$ (resp. $K$, $K'$ and $K''$).
		Notice that $h_2(\eta qrs)=h_2(F_1)=2^{m+1}$ (cf. Corollary \ref{corllaryh(F)=h(qrs)}) and so by Fukuda's theorem $ h_2(F_n)= 2^{m+1}$ for all $n\geq 0$.

		Let $n\geq 0$. By \cite[Theorem 5.9]{AziziRezzouguiZekhniniPeriodica} the $4$-rank of $F_n$ equals $1$.  So
		$A(F_n)\simeq  \ZZ/2\ZZ\times \ZZ/2^m\ZZ$.  
		Note that $K_n$, $K_n'$ and $K_n''$ are the three unramified extensions of $F_n$. Since by \cite[Lemma 5.5]{AziziRezzouguiZekhniniPeriodica}  $F_{3,2}=K$, this implies that
		$K_n$ is the unramified quadratic extension of $F_n$, for which all the three unramified biquadratic extensions over $F_n$, in its Hilbert $2$-class field, contain $K_n$. This means that we have the following diagram:
		\begin{figure}[H]
			\rotatebox{-00}{	$$
				\xymatrix{
					& F_n^{(2)} \ar@{<-}[d] & \\
					& F_n^{(1)}\ar@{<-}[ld]\ar@{<-}[d]\ar@{<-}[rd]\\
					{\bullet}\ar@{<-}[rd]&\ar@{<-}[ld] {\bullet}\ar@{<-}[d]\ar@{<-}[rd]  & {\bullet}\ar@{<-}[ld]\\
					{K_n'}\ar@{<-}[rd]& {K_n } \ar@{<-}[d]  & {K_n''}\ar@{<-}[ld]\\
					&F_n 
				}
				$$}
			\caption{}\label{fig2}
		\end{figure}

		Here the bullets (in Figure \ref{fig2}) are the unramified quartic extensions of $F_n$ within its Hilbert $2$-class field.

		We note that   we have  $\rg({A(K')}) = 2$ (cf. \cite[Théoème 1-$1)$]{AM2001}).
		So the fact that $K'_n/K'$ is ramified  gives $\rg({A(K'_n)}) \geq 2$.
		As under our conditions, $r$ and $s$ play symmetric roles,  we have $\rg({A(K''_n)}) \geq 2$. Therefore, by Lemma   \ref{metacyclic-nonmodular}-$1)$, $\rg({A(K'_n)})=\rg({A(K''_n)})=2 $.
		As  $\2r(A(K))=\2r(A(K_\infty))=2$ (cf. 	\cite[The main theorem, Items $5)$ and $14)$]{ChemseddinGreenbergConjectureI}), it follows that $\mathrm{Gal}(\mathcal{L}(F_\infty)/F_\infty)$ is a metacyclic-nonmodular $2$-group (cf. Lemma \ref{metacyclic-nonmodular}-$2)$).
		\begin{enumerate}[$1)$]
			\item If $\left(\frac{r}{s}\right)_4\not=\left(\frac{s}{r}\right)_4$,
			then $h_2(rs)=2$  (cf. \cite[Theorem 1]{kuvcera1995parity}). It follows from Corollary  \ref{etacorolaK}  that  $ h_2(K_n)=  h_2(F_n)= 2^{m+1}$. So by reading the classifications in Table \ref{tab1} (see the fifth column), we deduce that
			$$	A (K'_n)\simeq A (K''_n)\simeq \ZZ/2\ZZ\times \ZZ/2^m\ZZ  $$
			and 
			$$	 A (K_n)\simeq    {\ZZ/4\ZZ\times \ZZ/2^{m-1}\ZZ} .$$

			Moreover, $\mathrm{Gal}(\mathcal{L}(F_n)/F_n)$, the Galois group of the maximal unramified extension $\mathcal{L}(F_n)$ over $F_n$, is a  
	  metacyclic-nonmodular $2$-group of Type $1$ with $\alpha=2$. 
			
			\item If $\left(\frac{r}{s}\right)_4 =\left(\frac{s}{r}\right)_4=1$, then $h_2(rs) $ is divisible by $4$. So $h_2(rs)=2^{m'}$, with $m'\geq 2$ (cf. \cite[Theorem 1]{kuvcera1995parity}). 
			So by Corollary \ref{etacorolaK}, we have $  h_2(K_n)=2^{m+m'+1}   > 2^{m+1}=h_2(F_n)$. Therefore, by  Table \ref{tab1}, $\mathrm{Gal}(\mathcal{L}(F_n)/F_n)$ is not a  
			metacyclic-nonmodular $2$-group of Type $1$ with $\alpha=2$.
		\end{enumerate}Hence, the result follows  by taking the inverse limit.
	\end{proof}

	\begin{corollary}\label{corl1}		Let $ q\equiv     3\pmod 4$ and  $r\equiv s\equiv    5\pmod 8$   be distinct    prime numbers such that $\left(\frac{q}{r}\right)=\left(\frac{q}{s}\right)= (-1)^{\delta_{\eta,2}}$, 
		$\left(\frac{r}{s}\right)=1$ and 	$\left(\frac{r}{s}\right)_4\not=\left(\frac{s}{r}\right)_4$.   Put   	$ \varepsilon_{\eta qrs}=\gamma+\gamma'\sqrt{\eta qrs}$.
		Assume  that $2^{\delta_{\eta,1}}q(\gamma-1)$    is not a square  in $\NN$.
		Let $m$  be such that $h_2(\eta qrs)=2^{m+1}$.
		Then, we have:
		$$\mathrm{Gal}(\mathcal{L}(F_\infty)/F_\infty)=\langle a, b\ | \ a^{4}=1,  \  b^{2^m}=1,\  b^{-1}ab=a^{-1} \rangle  . $$
	\end{corollary}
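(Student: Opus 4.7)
The plan is to combine Theorem \ref{etatheoremmetacyclic-nonmodular} with the explicit Benjamin--Snyder presentation for Type 1 groups with $\alpha=2$. First I would verify that the hypotheses of the corollary imply those of Theorem \ref{etatheoremmetacyclic-nonmodular}; the only thing to check is the norm condition $N(\varepsilon_{rs})=1$. Since $\left(\frac{r}{s}\right)=1$, \cite[Theorem 1]{kuvcera1995parity} tells us that $N(\varepsilon_{rs})=-1$ occurs if and only if $\left(\frac{r}{s}\right)_4=\left(\frac{s}{r}\right)_4=-1$. The corollary's assumption $\left(\frac{r}{s}\right)_4\neq\left(\frac{s}{r}\right)_4$ rules this out, so $N(\varepsilon_{rs})=1$, and Theorem \ref{etatheoremmetacyclic-nonmodular} then applies: $G:=\mathrm{Gal}(\mathcal{L}(F_\infty)/F_\infty)$ is a metacyclic-nonmodular pro-$2$-group of Type 1 with $\alpha=2$, and $A(F_\infty)\simeq \ZZ/2\ZZ\times\ZZ/2^m\ZZ$.

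Next I would invoke the classification of metacyclic-nonmodular groups recalled in the introduction to write
$$G=\langle a,b \ | \ a^{4}=1,\ b^{2^n}=1,\ b^{-1}ab=a^{-1}\rangle$$
for some integer $n\geq 2$, this being the prescribed shape of Type 1 with $\alpha=2$. A direct commutator computation yields $[a,b]=a^{-1}\cdot b^{-1}ab=a^{-2}$, so $G'=\langle a^{2}\rangle$ is cyclic of order $2$, and consequently
$$G^{ab}\simeq \ZZ/2\ZZ\times\ZZ/2^n\ZZ.$$
By class field theory, $G^{ab}\simeq A(F_\infty)\simeq \ZZ/2\ZZ\times\ZZ/2^m\ZZ$, so matching invariants forces $n=m$ and gives the desired presentation.

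The main (and rather mild) obstacle is the bookkeeping ensuring that the Benjamin--Snyder classification, originally phrased for finite metacyclic-nonmodular $2$-groups, transfers to the pro-$2$-group $G$. This is fine under our hypotheses: $A(F_\infty)$ is finite by Theorem \ref{etatheoremmetacyclic-nonmodular}, and since $G$ is metacyclic its commutator subgroup is (pro-)cyclic, so a finite abelianization forces $G$ itself to be finite. Hence $G$ literally admits the Type 1 presentation with $\alpha=2$ and $n=m$, completing the proof.
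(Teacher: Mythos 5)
Your route is the paper's own: Corollary \ref{corl1} is stated there without a separate proof, precisely because it is the immediate combination of Theorem \ref{etatheoremmetacyclic-nonmodular} with the Benjamin--Snyder presentation of Type 1 groups with $\alpha=2$, followed by matching $G^{ab}\simeq A(F_\infty)\simeq \ZZ/2\ZZ\times\ZZ/2^m\ZZ$ to pin down $n=m$; your commutator computation $G'=\langle a^2\rangle$ and the resulting identification are exactly this. You were also right to notice that the corollary drops the hypothesis $N(\varepsilon_{rs})=1$ and that it must be recovered from $\left(\frac{r}{s}\right)_4\neq\left(\frac{s}{r}\right)_4$; this is indeed the classical Scholz/Ku\v{c}era statement. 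One caveat: your ``if and only if'' ($N(\varepsilon_{rs})=-1$ exactly when both quartic symbols are $-1$) is an overstatement --- when $\left(\frac{r}{s}\right)_4=\left(\frac{s}{r}\right)_4=+1$ the norm is not determined by the quartic symbols --- but you only use the true direction, namely that differing quartic symbols force $N(\varepsilon_{rs})=1$, so no harm results.

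The genuinely wrong step is your finiteness argument. The claim ``since $G$ is metacyclic its commutator subgroup is (pro-)cyclic, so a finite abelianization forces $G$ itself to be finite'' is false for pro-$2$ groups: take $G=\ZZ_2\rtimes(\ZZ/4\ZZ)$ with the generator $b$ of $\ZZ/4\ZZ$ acting on $a\in\ZZ_2$ by inversion. This $G$ is metacyclic (closed pro-cyclic normal subgroup $\ZZ_2$ with cyclic quotient), has $G'=2\ZZ_2\simeq\ZZ_2$ and $G^{ab}\simeq\ZZ/2\ZZ\times\ZZ/4\ZZ$ finite, yet $G$ is infinite --- and it is exactly the infinite ``shadow'' of Type 1 with $\alpha=2$ that your argument needs to exclude. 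The correct justification is the one embedded in the paper's proof of Theorem \ref{etatheoremmetacyclic-nonmodular}: by Fukuda's lemma $h_2(F_n)=2^{m+1}$ for every $n$, and each layer group $\mathrm{Gal}(\mathcal{L}(F_n)/F_n)$ is a finite metacyclic-nonmodular group of Type 1 with $\alpha=2$, hence of constant order $|G_n^{ab}|\cdot|G_n'|=2^{m+1}\cdot 2=2^{m+2}$; since $F_{n+1}/F_n$ is ramified, the restriction maps $\mathrm{Gal}(\mathcal{L}(F_{n+1})/F_{n+1})\to\mathrm{Gal}(\mathcal{L}(F_n)/F_n)$ are surjective between groups of the same finite order, hence isomorphisms, so the inverse limit $\mathrm{Gal}(\mathcal{L}(F_\infty)/F_\infty)$ is finite of order $2^{m+2}$. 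Equivalently, in the paper's usage the theorem's conclusion ``pro-$2$-group of Type 1 with $\alpha=2$'' already carries the finite presentation, making your extra finiteness lemma unnecessary. With that paragraph repaired (or simply deleted in favor of citing the layerwise stability), your proof coincides with the paper's intended derivation.
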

	
\bigskip
	
	The     following corollary  gives some   families of real  biquadratic fields satisfying Greenberg's conjecture.

	 \medskip
	 \begin{corollary}\label{corol2}
	 	Let $ q\equiv     3\pmod 4$ and  $r\equiv s\equiv    5\pmod 8$   be  distinct   prime numbers such that $\left(\frac{q}{r}\right)=\left(\frac{q}{s}\right)= (-1)^{\delta_{\eta,2}}$,  
	 	$\left(\frac{r}{s}\right)=1$          and  $N(\varepsilon_{rs})=1$. Put $\varepsilon_{\eta qrs}=\gamma+\gamma'\sqrt{\eta qrs}$.	Assuming   that $2^{\delta_{\eta,1}}q(\gamma-1)$    is not a square  in $\NN$, we get the following items.
	 	\begin{enumerate}[$1)$]
	 		\item Let  $K'=\QQ(\sqrt{r},\sqrt{\eta qs})$, we have:
	 		$$ A(K'_\infty)\simeq A (K')\simeq \ZZ/2\ZZ\times \ZZ/2^m\ZZ.$$
	 		
	 		\item Let   $K =\QQ(\sqrt{\eta q},\sqrt{rs})$ and assume furthermore that $\left(\frac{r}{s}\right)_4\not=\left(\frac{s}{r}\right)_4$. We have:
	 		$$	A(K_\infty)\simeq A (K)\simeq {\ZZ/4\ZZ\times \ZZ/2^{m-1}\ZZ}.$$
	 	\end{enumerate}
	 	Here  $m$  is the integer such that $h_2(\eta qrs)=2^{m+1}$.
	 \end{corollary}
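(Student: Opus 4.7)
The plan is to observe that both items are essentially already encoded in the proof of Theorem \ref{etatheoremmetacyclic-nonmodular}: setting $n = 0$ in Case~$1$ of that argument outputs the two structural isomorphisms we want, and Fukuda's theorem then propagates them to every layer of the cyclotomic $\ZZ_2$-tower. Accordingly, my proposal is to extract the conclusion from that proof and record it as a standalone statement, rather than redo the work from scratch.

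First, I would pin down the $2$-ranks of $A(K_n)$ and $A(K'_n)$. For $K' = \QQ(\sqrt{r},\sqrt{\eta qs})$, applying the ambiguous class number formula (Lemma \ref{AmbiguousClassNumberFormula}) over $F' = \QQ(\sqrt{r})$ gives $\rg(A(K')) = 4 - e_{K'/F'} \geq 2$, since $h_2(F') = 1$ and a direct count shows that exactly four primes ramify in $K'/F'$ (those above $q$, above $s$, and, when $\eta = 2$, also above $2$). The reverse bound $\rg(A(K'_n)) \leq 2$ comes from Lemma \ref{metacyclic-nonmodular}-$1)$ applied with $F_n$ as the base (its $2$-class group is $\ZZ/2\ZZ \times \ZZ/2^m\ZZ$ by Corollary \ref{corllaryh(F)=h(qrs)} combined with Fukuda). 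Thus the rank is exactly $2$ at every layer; an identical argument over $\QQ(\sqrt{s})$ handles $K''_n$, and the same strategy handles $K_n$ for item $2)$.

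Next, I would pin down the orders. For item $2)$, the extra hypothesis $(r/s)_4 \neq (s/r)_4$ forces, via Kucera's theorem, both $N(\varepsilon_{rs}) = 1$ and $h_2(rs) = 2$; feeding this into Corollary \ref{etacorolaK} yields $|A(K_n)| = \tfrac{1}{2} h_2(rs) h_2(\eta qrs) = 2^{m+1}$. Combined with rank $2$, and inspecting the third row of Table \ref{tab1} (the index-$2$ subgroup $H_{3,2}$ of a Type $1$, $\alpha = 2$ metacyclic-nonmodular $2$-group has abelianization of type $(4, 2^{n-1})$), this forces $A(K_n) \simeq \ZZ/4\ZZ \times \ZZ/2^{m-1}\ZZ$. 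The analogous rows for $H_{1,2}$ and $H_{2,2}$, whose abelianizations have type $(2, 2^n)$, then deliver $A(K'_n) \simeq \ZZ/2\ZZ \times \ZZ/2^m\ZZ$ of item $1)$. Stability $A_\infty \simeq A$ follows by applying Fukuda's theorem (Lemma \ref{lm fukuda}) layerwise, once one observes that the $2$-rank and the $2$-class number have both stabilized at the base.

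The main delicate point will be matching the three biquadratic fields $K_n$, $K'_n$, $K''_n$ with the three index-$2$ subgroups $H_{3,2}$, $H_{1,2}$, $H_{2,2}$ of $\mathrm{Gal}(\mathcal{L}(F_\infty)/F_\infty)$ via class field theory, so that the abelianizations read off from Table \ref{tab1} really correspond, column by column, to $A(K_n)$, $A(K'_n)$, $A(K''_n)$. Once that correspondence is fixed (and it is essentially already set up in the discussion surrounding Figure \ref{fig2} in the proof of Theorem \ref{etatheoremmetacyclic-nonmodular}), the remainder is routine bookkeeping with the class number formula and Fukuda's theorem.
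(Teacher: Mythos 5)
Your overall route is the paper's own: the paper proves this corollary precisely by extracting it from the proof of Theorem \ref{etatheoremmetacyclic-nonmodular} (ranks via Lemma \ref{AmbiguousClassNumberFormula} and Lemma \ref{metacyclic-nonmodular}, metacyclic-nonmodularity of $\mathrm{Gal}(\mathcal{L}(F_n)/F_n)$ at every layer, then abelianizations read off Table \ref{tab1}, with Ku\v{c}era plus Corollary \ref{etacorolaK} controlling the order; your remark that $\left(\frac{r}{s}\right)_4\neq\left(\frac{s}{r}\right)_4$ forces $N(\varepsilon_{rs})=1$ is correct and needed, and the paper leaves it implicit). But one step you supplied blind has a genuine gap: the claim that ``the same strategy handles $K_n$'', i.e.\ that Lemma \ref{metacyclic-nonmodular}-$1)$ caps $\mathrm{rank}(A(K_n))$ at $2$. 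That lemma bounds $\mathrm{rank}(A(k_{i,2}))$ only for $i\in\{1,2\}$, and $K_n$ is exactly the excluded field $k_{3,2}$: by the identification in the proof of Theorem \ref{etatheoremmetacyclic-nonmodular} (via \cite[Lemma 5.5]{AziziRezzouguiZekhniniPeriodica}), $K_n$ is the quadratic unramified extension of $F_n$ contained in all three quartic unramified extensions, so it corresponds to $H_{3,2}$. Nor does the ambiguous class number formula rescue this: in $K/\QQ(\sqrt{\eta q})$ exactly four primes of the base ramify (two above $r$, two above $s$; the prime above $2$ is unramified since $rs\equiv 1\pmod 8$), so Lemma \ref{AmbiguousClassNumberFormula} gives $\mathrm{rank}(A(K))=3-e$ with $e\in\{0,1,2\}$ undetermined without a unit-norm computation. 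The paper closes precisely this point by importing $\mathrm{rank}(A(K_n))=\mathrm{rank}(A(K_\infty))=2$ from \cite[The main theorem, Items $5)$ and $14)$]{ChemseddinGreenbergConjectureI}; without that input (or work pinning $e=1$), Lemma \ref{metacyclic-nonmodular}-$2)$ cannot be invoked, metacyclic-nonmodularity is not established, and the entire Table \ref{tab1} readout collapses.

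Two further defects. First, your ramified-prime count for $K'/F'$ is wrong and inconsistent with the formula you display: for $\eta=1$ one has $qs\equiv 3\pmod 4$, so the inert prime of $F'=\QQ(\sqrt{r})$ above $2$ \emph{does} ramify in $K'/F'$; the correct count is $t_{K'/F'}=5$ (two primes above $q$, two above $s$, one above $2$), whence $\mathrm{rank}(A(K'))=t-1-e=4-e\geq 2$ as in the paper. With your $t=4$, Lemma \ref{AmbiguousClassNumberFormula} yields $3-e$, not $4-e$ (you dropped the $-1$), and $e\in\{0,1,2\}$ would no longer guarantee rank at least $2$ (for $\eta=2$ the count really is four --- $q$ is inert since $\left(\frac{q}{r}\right)=-1$ --- so there the bound needs separate care). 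Second, a scope issue: item $1)$ carries no quartic-residue hypothesis, yet as written you derive $A(K'_n)$ from the Type $1$, $\alpha=2$ rows of Table \ref{tab1}, which you justified only under $\left(\frac{r}{s}\right)_4\neq\left(\frac{s}{r}\right)_4$. The repair, which is the paper's implicit logic, is that $H_{12}^{ab}=H_{22}^{ab}=(2,2^n)$ in \emph{every} metacyclic-nonmodular case of Table \ref{tab1}, so metacyclic-nonmodularity alone suffices for item $1)$; the quartic hypothesis is needed only to pin the order $h_2(K_n)=2^{m+1}$, hence the type (Type $1$, $\alpha=2$ is the unique row with $|H_{32}^{ab}|=2^{m+1}$), hence $A(K_n)\simeq \ZZ/4\ZZ\times\ZZ/2^{m-1}\ZZ$ in item $2)$.
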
	
	 \begin{proof}
	 	The second item is a direct deduction of the proof of Theorem \ref{etatheoremmetacyclic-nonmodular}. The idea of behind the first item   appears in the proof  Theorem \ref{etatheoremmetacyclic-nonmodular}, but for the reader's convenience   let us put   $K=\QQ(\sqrt{\eta q},\sqrt{rs})$, $K'=\QQ(\sqrt{r},\sqrt{\eta qs})$ and $K''=\QQ(\sqrt{s},\sqrt{\eta qr})$  and    $F =\QQ(\sqrt{\eta qrs})$.
	 	Notice that $A(F_n)\simeq \ZZ/2\ZZ\times \ZZ/2^m\ZZ$, in fact the $4$-rank of $F_n$ equals $1$ (cf. \cite[Theorem 5.9]{AziziRezzouguiZekhniniPeriodica}).
	 	As in the   proof of Theorem \ref{etatheoremmetacyclic-nonmodular}, we check that,  for all $n\geq 0$, we have $\2r(A(K'_n))\geq 2$.  Since $K_n'$ is an unramified quadratic extension of $F_n$ that is not contained in the three unramified quartic extention of $F_n$, we get $\2r(A(K'_n))= 2$ (cf.  Lemma   \ref{metacyclic-nonmodular}-$1)$). We deduce that $\2r(A(K'_n))= \2r(A(K''_n))=2$.
	 	Furthermore, as  $\2r(A(K_n))= 2$ (cf.  \cite[The main theorem, Items $5)$ and $14)$]{ChemseddinGreenbergConjectureI}), the Galois group   $\mathrm{Gal}(\mathcal{L}(F_\infty)/F_\infty)$ is a metacyclic-nonmodular  $2$-group (cf. Lemma   \ref{metacyclic-nonmodular}-$2)$). So we have the result by Table \ref{tab1}.   
	 \end{proof}

	 \medskip

		\begin{remark}\label{remexamp}
		Assume that $\eta$,  $q$, $r$ and $s$ satisfy the hypothesis of    Theorem \ref{etatheoremmetacyclic-nonmodular}.
		\begin{enumerate}[$1)$]
			\item
			If $h_2(\eta qrs)=2^{m+1}$, with $m\geq 4$, then the second item of Corollary \ref{corol2}  gives real biquadratic fields whose  $2$-Iwasawa modules are of $4$-rank equals $2$ and of $8$-rank equals $1$. 
			
			\item 
			Using 
			PARI/GP (cf. \cite{PARI}), we  get the following examples illustrating Corollary \ref{corol2}  and Theorem \ref{etatheoremmetacyclic-nonmodular} (cf. Tables \ref{tableexamples} and \ref{tableexamples2}):
			\begin{table}[H]
				{ \footnotesize
					$$ \begin{tabular}{  |c |c |c| c |c| c|}
						\hline	\rsp  $(\eta, q,r,s)$  & $h_2(\eta qrs)$ &$A(F)$ $(\simeq A(F_\infty))$ & $A(K)$ $(\simeq A(K_\infty))$ & $A(K')$ $(\simeq A(K'_\infty))$     \\ 
						\hline

						\rsp	$(1, 23,13,29)$      &    $2^3$&$\ZZ/2\ZZ\times \ZZ/4\ZZ  $ & $\ZZ/4\ZZ\times \ZZ/2\ZZ$  & $\ZZ/2\ZZ\times \ZZ/4\ZZ  $   \\
						\hline
						\rsp $(1, 107,13,29)$       &$2^4  $& $\ZZ/2\ZZ\times \ZZ/8\ZZ  $ & $\ZZ/4\ZZ\times \ZZ/4\ZZ$  &$\ZZ/2\ZZ\times \ZZ/8\ZZ  $       \\
						\hline 
						\rsp $(1, 443,181,13)$      &  $2^5  $&  $\ZZ/2\ZZ\times \ZZ/16\ZZ  $ &  $\ZZ/4\ZZ\times \ZZ/8\ZZ$   &   $\ZZ/2\ZZ\times \ZZ/16\ZZ  $      \\
						\hline

						\rsp $(2, 307,13,29)$      &$2^3       $ & $\ZZ/2\ZZ\times \ZZ/4\ZZ  $& $\ZZ/4\ZZ\times \ZZ/2\ZZ$  & $\ZZ/2\ZZ\times \ZZ/4\ZZ  $    \\ 
						\hline
						\rsp $(2, 31,13,29)$        &$2^4  $ &$\ZZ/2\ZZ\times \ZZ/8\ZZ  $  & $\ZZ/4\ZZ\times \ZZ/4\ZZ$  &$\ZZ/2\ZZ\times \ZZ/8\ZZ  $   \\ 
						\hline
						\rsp	$(2, 23,37,53)$   &  $2^5  $ &$\ZZ/2\ZZ\times \ZZ/16\ZZ  $&  $\ZZ/4\ZZ\times \ZZ/8\ZZ$   &   $\ZZ/2\ZZ\times \ZZ/16\ZZ  $     \\
						\hline
						
					\end{tabular}$$}
				\caption{Numerical examples with $r$ and $s$ satisfy $\left(\frac{r}{s}\right)_4\not=\left(\frac{s}{r}\right)_4$.} \label{tableexamples}
			 	\end{table}
		 	\begin{table}[H]
				{ \footnotesize
					$$ \begin{tabular}{  |c |c |c| p{2.4cm} |c| c|}
						\hline	\rsp  $(\eta, q,r,s)$  & $h_2(\eta qrs)$ &$A(F)$ $(\simeq A(F_\infty))$ &  \hspace*{0.43cm} $  A(K)$  \newline   $\quad \hspace*{0.43cm}A(K_1)$  & $A(K')$ $(\simeq A(K'_\infty))$     \\ 
						\hline

						\rsp 	$(1, 131,53,13)$   &  $2^3  $ &$\ZZ/2\ZZ\times \ZZ/4\ZZ  $&  $\ZZ/2\ZZ\times \ZZ/8\ZZ$\newline  $\ZZ/2\ZZ\times \ZZ/8\ZZ$ &   $\ZZ/2\ZZ\times \ZZ/4\ZZ  $     \\
						\hline                    
						
						\rsp 	$(1, 367,61,13)$   &  $2^4  $ &$\ZZ/2\ZZ\times \ZZ/8\ZZ  $&  $\ZZ/4\ZZ\times \ZZ/8\ZZ$\newline  $\ZZ/4\ZZ\times \ZZ/8\ZZ$ &   $\ZZ/2\ZZ\times \ZZ/8\ZZ  $     \\
						\hline 
						
						\rsp 	$(1, 491,61,13)$   &  $2^5  $ &$\ZZ/2\ZZ\times \ZZ/16\ZZ  $&  $\ZZ/8\ZZ\times \ZZ/8\ZZ$\newline   $\ZZ/8\ZZ\times \ZZ/8\ZZ$  &   $\ZZ/2\ZZ\times \ZZ/16\ZZ  $     \\
						\hline

						\rsp 	$(2, 67,53,13)$   &  $2^3  $ &$\ZZ/2\ZZ\times \ZZ/4\ZZ  $&  $\ZZ/2\ZZ\times \ZZ/8\ZZ$\newline  $\ZZ/2\ZZ\times \ZZ/8\ZZ$ &   $\ZZ/2\ZZ\times \ZZ/4\ZZ  $     \\
						\hline
						
						\rsp 	$(2, 19,53,13)$   &  $2^4  $ &$\ZZ/2\ZZ\times \ZZ/8\ZZ  $&  $\ZZ/4\ZZ\times \ZZ/8\ZZ$\newline  $\ZZ/4\ZZ\times \ZZ/8\ZZ$  &   $\ZZ/2\ZZ\times \ZZ/8\ZZ  $     \\
						\hline
						
						\rsp 	$(2, 163,181,29)$   &  $2^4  $ &$\ZZ/2\ZZ\times \ZZ/8\ZZ  $&  $\ZZ/4\ZZ\times \ZZ/16\ZZ$\newline  $\ZZ/4\ZZ\times \ZZ/16\ZZ$  &   $\ZZ/2\ZZ\times \ZZ/8\ZZ  $     \\
						\hline
						
						\rsp	$(2, 251,157,37)$   &  $2^5  $ &$\ZZ/2\ZZ\times \ZZ/16\ZZ  $&  $\ZZ/8\ZZ\times \ZZ/8\ZZ$  \newline $\ZZ/8\ZZ\times \ZZ/8\ZZ$    &   $\ZZ/2\ZZ\times \ZZ/16\ZZ  $     \\
						\hline
					\end{tabular}$$}
				\caption{Numerical examples with $r$ and $s$ satisfy $\left(\frac{r}{s}\right)_4 =\left(\frac{s}{r}\right)_4=1$.} \label{tableexamples2}
					\end{table}
		 	The primes $q$, $r$ and $s$ in Tables \ref{tableexamples} and \ref{tableexamples2} satisfy the hypothesis of   Theorem \ref{etatheoremmetacyclic-nonmodular}. More precisely, we have:
			\begin{enumerate}[\indent$\bullet$]
				\item The primes  $r$ and $s$ in   Table \ref{tableexamples} satisfy the condition $\left(\frac{r}{s}\right)_4\not=\left(\frac{s}{r}\right)_4$. So  for these primes, the Galois group $\mathrm{Gal}(\mathcal{L}(F_\infty)/F_\infty)$     is   
				a   { metacyclic-nonmodular pro-$2$-group of Type 1 with $\alpha=2$} (cf. Theorem \ref{etatheoremmetacyclic-nonmodular}).

				\item  The primes  $r$ and $s$ in   Table \ref{tableexamples2} satisfy the condition $\left(\frac{r}{s}\right)_4=\left(\frac{s}{r}\right)_4=1$. Hence for these primes, the Galois group $\mathrm{Gal}(\mathcal{L}(F_\infty)/F_\infty)$     is  {\bf not} 
				a   { metacyclic-nonmodular pro-$2$-group of Type 1 with $\alpha=2$} (cf. Theorem \ref{etatheoremmetacyclic-nonmodular}). 
			Moreover, we note that Corollary \ref{etacorolaK}  implies that   $ |A(K_\infty)|=  |A(K)|$ and since $K_n/K$ is totally ramified, the norm map $A(K_n)\rightarrow A(K)$ is  surjective. So $A(K_\infty)$ is isomorphic to $A(K)$. This fact is illustrated in the fourth column of Table \ref{tableexamples2}.	
			\end{enumerate}
		\end{enumerate}
		
	\end{remark}

	\begin{corollary}\label{corolminimalmetacyclic-nonmodular}
		
		Let $q$, $r$ and $s$ be prime numbers satisfying the hypothesis of Theorem \ref{etatheoremmetacyclic-nonmodular}.
		Assume that $h_2(\eta qrs)=8$.   Then      $$A(F)\simeq A(F_\infty)\simeq \ZZ/2\ZZ\times \ZZ/4\ZZ.$$
		Furthermore,  the Galois group 
		$\mathrm{Gal}(\mathcal{L}(F_\infty)/F_\infty)$, where  $\mathcal{L}(F_\infty)$ denotes the  unramified pro-$2$-extension  of $F_\infty$,
	 is   	a {\bf minimal  metacyclic-nonmodular} $2$-group of Type $1$ and of order $16$.
			\end{corollary}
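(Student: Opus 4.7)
The plan is to obtain both conclusions as a direct specialization of Theorem \ref{etatheoremmetacyclic-nonmodular} and Corollary \ref{corl1} to the case $m=2$, followed by a short minimality check read off Table \ref{tab1}.

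First, I would apply Theorem \ref{etatheoremmetacyclic-nonmodular}, whose hypotheses are exactly those assumed in the corollary. It yields $A(F)\simeq A(F_\infty)\simeq \ZZ/2\ZZ\times\ZZ/2^m\ZZ$ with $m$ determined by $h_2(\eta qrs)=2^{m+1}$. Substituting the assumption $h_2(\eta qrs)=8=2^3$ immediately forces $m=2$, and so
\[
A(F)\simeq A(F_\infty)\simeq \ZZ/2\ZZ\times\ZZ/4\ZZ,
\]
which is the first assertion.

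For the structural statement I invoke the iff-part of Theorem \ref{etatheoremmetacyclic-nonmodular}, equivalently the explicit presentation furnished by Corollary \ref{corl1}, in its Type~$1$, $\alpha=2$ branch. With $m=2$ this gives
\[
G:=\mathrm{Gal}(\mathcal L(F_\infty)/F_\infty)=\langle a,b \mid a^4=1,\ b^4=1,\ b^{-1}ab=a^{-1}\rangle,
\]
whose order is $|G|=2^{\alpha}\cdot 2^{m}=4\cdot 4=16$. This already identifies $G$ as a metacyclic-nonmodular pro-$2$-group of Type~$1$ with $\alpha=2$ and of order $16$.

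Finally, I would certify minimality, namely that every proper subgroup of $G$ is abelian. Since $G$ is a finite $2$-group, every proper subgroup is contained in a maximal (index-$2$) subgroup, so it suffices to check the three maximal subgroups. Reading the row ``Type~$1$ and $\alpha=2$'' of Table \ref{tab1}, each of $H_{12}=\langle a^2,b\rangle$, $H_{22}=\langle a^2,ab\rangle$ and $H_{32}=\langle a,b^2\rangle$ has $H'_{i2}=\langle 1\rangle$, so all three are abelian. Consequently every proper subgroup of $G$ is abelian and $G$ is minimal. The only mildly delicate point in the blueprint is the activation of the iff-criterion $\left(\frac{r}{s}\right)_4\neq\left(\frac{s}{r}\right)_4$ of Theorem \ref{etatheoremmetacyclic-nonmodular}, which is needed to land in the Type~$1$ $\alpha=2$ branch rather than an abelian or modular alternative; I would read this as implicit in the corollary's hypothesis, consistent with the explicit use of Corollary \ref{corl1}, and no additional number-theoretic input is required.
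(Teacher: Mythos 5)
Your proposal is correct, and for the minimality claim it takes a genuinely different route from the paper. The paper's proof is a one-line deduction from Theorem \ref{etatheoremmetacyclic-nonmodular} together with Theorem \ref{AabounePrzekhiniNonmodular}, i.e.\ the Aaboun--Zekhnini criterion: from the proof of Theorem \ref{etatheoremmetacyclic-nonmodular} one knows that with $m=2$ the field $K=K_{3,2}$ has $A(K)\simeq \ZZ/4\ZZ\times\ZZ/2\ZZ$ and that the $2$-class groups of $K'_n$ and $K''_n$ have rank $2$, so item $3)$ of Theorem \ref{AabounePrzekhiniNonmodular} yields that the Galois group is minimal metacyclic-nonmodular of type $1$ and order $16$ at each layer, and one concludes by the inverse limit. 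You instead bypass this number-theoretic equivalence entirely: once Corollary \ref{corl1} hands you the presentation $\langle a,b \mid a^4=b^4=1,\ b^{-1}ab=a^{-1}\rangle$ of order $16$, you certify minimality purely group-theoretically, reducing to the three maximal subgroups and reading off $H'_{i2}=\langle 1\rangle$ from the ``Type $1$ and $\alpha=2$'' row of Table \ref{tab1}. Your route is more self-contained and makes the order computation $|G|=2^{\alpha+m}=16$ explicit, at the cost of leaning on Corollary \ref{corl1} (itself an unproved specialization of the theorem) where the paper leans on the imported Theorem \ref{AabounePrzekhiniNonmodular}; the mathematical content is equivalent. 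Your flag about the quartic condition $\left(\frac{r}{s}\right)_4\neq\left(\frac{s}{r}\right)_4$ is well taken and your resolution matches the paper's intent: the condition is genuinely needed (if $\left(\frac{r}{s}\right)_4=\left(\frac{s}{r}\right)_4=1$ then $h_2(rs)\geq 4$, so $h_2(K)\geq 16$, the derived subgroup has order at least $4$, and the group cannot have order $16$), and the paper's own route suffers the same ambiguity, since the hypothesis $A(K_{3,2})\simeq\ZZ/2\ZZ\times\ZZ/4\ZZ$ of Theorem \ref{AabounePrzekhiniNonmodular} is only available in that branch of Theorem \ref{etatheoremmetacyclic-nonmodular}; so reading the condition into the corollary's hypotheses, as you do, is the only consistent interpretation.
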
	
	\begin{proof}
		This is a direct deduction of Theorems \ref{etatheoremmetacyclic-nonmodular} and   \ref{AabounePrzekhiniNonmodular}.
	\end{proof}
	
	\medskip
	
We close the paper with the following result concerning real triquadratic fields.
	
		\medskip

	\begin{theorem}\label{corllarealtri(2 2)}
		Let $ q\equiv     3\pmod 4$ and  $r\equiv s\equiv    5\pmod 8$   be  distinct   prime numbers such that $\left(\frac{q}{r}\right)=\left(\frac{q}{s}\right)= (-1)^{\delta_{\eta,2}}$, 
		$\left(\frac{r}{s}\right)=1$ and 	$\left(\frac{r}{s}\right)_4\not=\left(\frac{s}{r}\right)_4$. 
		Assume   that $2^{\delta_{\eta,1}}q(\gamma-1)$    is not a square  in $\NN$ and let $m$ be the integer such that $h_2(\eta qrs)=2^{m+1}$.
		Then, for $\mathbb{F}=\QQ(\sqrt{\eta q},\sqrt{r},\sqrt{s})$, we have:
		$$  A(\mathbb{F})\simeq A(\mathbb{F}_\infty)\simeq \ZZ/2\ZZ\times \ZZ/2^{m-1}\ZZ.$$
		Moreover, $\mathrm{Gal}(\mathcal{L}(\mathbb{F}_\infty)/\mathbb{F}_\infty)$, the Galois group of the maximal unramified pro-$2$-extension $\mathcal{L}(\mathbb{F}_\infty)$ over $\mathbb{F}_\infty$, is abelian.
	\end{theorem}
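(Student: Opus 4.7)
The plan is to identify $\mathrm{Gal}(\mathcal{L}(\mathbb{F}_\infty)/\mathbb{F}_\infty)$ group-theoretically as the Frattini subgroup of $G := \mathrm{Gal}(\mathcal{L}(F_\infty)/F_\infty)$ (with $F = \QQ(\sqrt{\eta qrs})$), whose presentation was determined in Corollary \ref{corl1}, and then to descend to the finite level by combining a class number formula computation with Fukuda's theorem. The first observation is that $\mathbb{F}/F$ is unramified: the three intermediate quadratic extensions of $F$ inside $\mathbb{F}$ are exactly $K=\QQ(\sqrt{\eta q},\sqrt{rs})$, $K'=\QQ(\sqrt{r},\sqrt{\eta qs})$ and $K''=\QQ(\sqrt{s},\sqrt{\eta qr})$, which were shown in the proof of Theorem \ref{etatheoremmetacyclic-nonmodular} to be precisely the three unramified quadratic extensions of $F$ inside its Hilbert $2$-class field; consequently $\mathbb{F}_\infty/F_\infty$ is also unramified.

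By Corollary \ref{corl1}, $G = \langle a,b\mid a^4=1,\ b^{2^m}=1,\ b^{-1}ab=a^{-1}\rangle$. A direct computation yields $G'=\langle [a,b]\rangle=\langle a^2\rangle$ of order $2$ (hence central in the $2$-group $G$), and $G^2=\langle a^2,b^2\rangle$, since the squaring identity
\[
(b^i a^j)^2 = b^{2i}\,a^{j(1+(-1)^i)}
\]
always lies in $\langle a^2,b^2\rangle$. Therefore the Frattini subgroup is $\Phi(G)=G^2G'=\langle a^2,b^2\rangle$. Because $a^2$ is central of order $2$, $b^2$ has order $2^{m-1}$, and $a^2\notin\langle b^2\rangle$, this subgroup is abelian with $\Phi(G)\simeq \ZZ/2\ZZ\times\ZZ/2^{m-1}\ZZ$.

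Since $\mathrm{Gal}(\mathbb{F}_\infty/F_\infty)\simeq (\ZZ/2\ZZ)^2$ is the maximal elementary abelian $2$-quotient of $G^{ab}$, the subgroup of $G$ fixing $\mathbb{F}_\infty$ has index $4$ and contains $\Phi(G)$; by comparing orders it equals $\Phi(G)$. Thus $\mathrm{Gal}(\mathcal{L}(\mathbb{F}_\infty)/\mathbb{F}_\infty)=\Phi(G)$ is abelian, $\mathcal{L}(\mathbb{F}_\infty)$ coincides with the Hilbert $2$-class field of $\mathbb{F}_\infty$, and
\[
A(\mathbb{F}_\infty)\simeq\Phi(G)\simeq\ZZ/2\ZZ\times\ZZ/2^{m-1}\ZZ.
\]

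To descend to $\mathbb{F}$, I apply the class number formula of Lemma \ref{wada's f.} to the real triquadratic field $\mathbb{F}$: $h_2(\mathbb{F})=\frac{q(\mathbb{F})}{2^9}\prod_{i=1}^{7}h_2(k_i)$. Under the hypotheses one has $h_2(\eta q)=h_2(r)=h_2(s)=1$, $h_2(rs)=2$ (from $(r/s)_4\neq (s/r)_4$), $h_2(\eta qrs)=2^{m+1}$, while $h_2(\eta qr)$ and $h_2(\eta qs)$ are determined by Rédei/genus arguments under the prescribed Legendre symbol conditions, and $q(\mathbb{F})$ is computed via Wada's method in the spirit of Lemma \ref{lemmunitstriquad}. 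Combining these yields $h_2(\mathbb{F})=2^m$. Since $2$ is totally ramified in $\mathbb{F}_\infty/\mathbb{F}$, the sequence $\{h_2(\mathbb{F}_n)\}_n$ is non-decreasing and bounded by $|A(\mathbb{F}_\infty)|=2^m$; the equality $h_2(\mathbb{F})=2^m$ together with Lemma \ref{lm fukuda} then forces $h_2(\mathbb{F}_n)=2^m$ for all $n\geq 0$, whence $A(\mathbb{F})\simeq A(\mathbb{F}_\infty)\simeq \ZZ/2\ZZ\times\ZZ/2^{m-1}\ZZ$. The hardest part is the Wada-style determination of $q(\mathbb{F})$: among the products of fundamental units of the biquadratic subfields, one must identify those whose square roots actually lie in $\mathbb{F}$, an analysis combinatorially heavier than Lemma \ref{lemmunitstriquad} because of the larger number of quadratic and biquadratic subfields, but carried out by the same systematic use of partial norm maps to eliminate impossible cases.
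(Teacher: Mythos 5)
Your infinite-level argument is correct, and it is in substance the same mechanism the paper uses: $\mathbb{F}_\infty$ corresponds to the Frattini subgroup of $G=\mathrm{Gal}(\mathcal{L}(F_\infty)/F_\infty)$, and your hand computation $\Phi(G)=\langle a^2,b^2\rangle\simeq \ZZ/2\ZZ\times\ZZ/2^{m-1}\ZZ$ (abelian, since $a^2$ is central and $b^2$ commutes with $a^2$) is exactly what the paper reads off from Table \ref{tab2} for Type 1 with $\alpha=2$ and $i=3$, namely $H_{34}=\langle a^2,b^2\rangle$, $H_{34}'=\langle 1\rangle$, $H_{34}^{ab}\simeq(2,2^{m-1})$. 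Your identification of the index-$4$ subgroup fixing $\mathbb{F}_\infty$ with $\Phi(G)$ (elementary abelian quotient, equal indices) is also sound.

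The genuine gap is in your descent to $\mathbb{F}$ itself. Everything hinges on the claim $h_2(\mathbb{F})=2^m$, and you do not prove it: in Kuroda's formula $h_2(\mathbb{F})=\frac{q(\mathbb{F})}{2^9}\prod_{i=1}^{7}h_2(k_i)$ the two decisive inputs --- the unit index $q(\mathbb{F})$ and the class numbers $h_2(\eta qr)$, $h_2(\eta qs)$ --- are merely asserted to be ``determined by R\'edei/genus arguments'' and ``computed via Wada's method in the spirit of Lemma \ref{lemmunitstriquad}.'' That is precisely the hard content of such results: Lemma \ref{lemmunitstriquad} costs several pages of norm-map eliminations for the single field $K_1=\QQ(\sqrt{2},\sqrt{q},\sqrt{rs})$, it depends on case distinctions ($N(\varepsilon_{rs})=\pm1$, the form of $\sqrt{\varepsilon_2\varepsilon_{rs}\varepsilon_{2rs}}$), and the analogous analysis for $\mathbb{F}$ would be heavier still; moreover the $2$-parts of $h_2(\eta qr)$ and $h_2(\eta qs)$ are not pinned down by the stated Legendre-symbol hypotheses without further quartic-residue work. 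The paper sidesteps all of this: the proof of Theorem \ref{etatheoremmetacyclic-nonmodular} establishes that $\mathrm{Gal}(\mathcal{L}(F_n)/F_n)$ is metacyclic-nonmodular of Type 1 with $\alpha=2$ for \emph{every} finite layer $n\geq 0$, not only in the limit; since $\mathbb{F}_n$ is the unramified quartic extension of $F_n$ containing all three unramified quadratic extensions, it corresponds to $H_{34}$ at each level, so Table \ref{tab2} gives $A(\mathbb{F}_n)\simeq\ZZ/2\ZZ\times\ZZ/2^{m-1}\ZZ$ simultaneously for all $n$ --- the case $n=0$ being $A(\mathbb{F})$ --- and $A(\mathbb{F}_\infty)$ follows by the inverse limit, with no class number formula for $\mathbb{F}$ and no appeal to Fukuda. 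Replacing your Kuroda--Wada descent by this finite-level Frattini argument closes the gap. (Two minor remarks: your route, like the paper's own proof, passes through Corollary \ref{corl1} and hence implicitly uses the hypothesis $2^{\delta_{\eta,1}}q(\gamma-1)\notin\NN^2$, which is absent from the statement being proved; and ``$2$ is totally ramified in $\mathbb{F}_\infty/\mathbb{F}$'' should be verified or replaced by the $n_0$ of Lemma \ref{lm fukuda}.)
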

	\begin{proof}  Let  $F_n$ (resp. $\mathbb{F}_n$) be the $n$th layer of the cyclotomic $\ZZ_2$-extension of $F=\QQ(\sqrt{\eta qrs})$ (resp. $\mathbb{F}$). 
		According to  Theorem  \ref{etatheoremmetacyclic-nonmodular}, we have    $  A(F_n)\simeq \ZZ/2\ZZ\times \ZZ/2^m\ZZ $
		 and	$\mathrm{Gal}(\mathcal{L}(F_n)/F_n)$ is   
		a metacyclic-nonmodular $2$-group of Type 1 with $\alpha=2$.  Notice that  $\mathbb{F}_n$ is the abelian quartic unramified $2$-extension of  {$F_n$}, within its Hilbert $2$-class field, that contains all its three quadratic unramified extensions. Therefore, by Table \ref{tab2}, 
		$   A(\mathbb{F}_n)\simeq \ZZ/2\ZZ\times \ZZ/2^{m-1}\ZZ.$ Thus, the result follows  by taking the inverse limit.
	\end{proof}

	\section*{Acknowledgments}
We are grateful to the reviewer for the insightful comments, which have contributed to improving this paper.
	We would also like  to thank Brahim Aaboun for useful discussions on the content of his paper cited in references.

	 	\section*{Data availability}	All data generated or analyzed during this study are included in this article.


\begin{thebibliography}{11}
		
		\bibitem{aaboune} B. Aaboun and A. Zekhnini, 	On the metacyclic 2-groups whose abelianizations are of type $(2,2^n)$, $n\geq2$ and applications, Res. Number Theory {9}, 55 (2023)
		
		
		
		\bibitem{Az-00} A. Azizi,   {Sur la capitulation des $2$-classes d'id\'eaux de	$\kk =\QQ(\sqrt{2pq}, i)$,  o\`u $p\equiv-q\equiv1\pmod4$, }{  Acta. Arith.  { 94} (2000),  383-399.} 
		
		\bibitem{AM2001} A. Azizi  et A. Mouhib, {Sur le rang du $2$-groupe de classes de $\mathbb{Q}( \sqrt{m},\sqrt{d} )$ où $m=2$ ou un premier $p \equiv 1 \pmod{4}$,} {Trans. Amer. Math. Soc., 353 (2001),  2741–2752.}
		
		\bibitem{AziziRezzouguiZekhniniPeriodica} A. Azizi,  M.  Rezzougui and   A.   Zekhnini,   On the maximal unramified pro-$2$-extension of certain cyclotomic $\mathbb Z_2$-extensions, Period. Math. Hung. 83 (2021), 54–66.
		
		\bibitem{AziziRezzouguiZekhniniDebrecen} A. Azizi,  M.  Rezzougui and    A. Zekhnini,   Structure of the Galois group of the maximal unramified pro-$2$-extension of some $\mathbb{Z}_2$-extensions, 100 (2022), 11-28.
		
		
		
		
		\bibitem{AzZektaous}	A. Azizi,     A. Zekhnini     and M. Taous,  On the strongly ambiguous classes of $\mathbb{K}/\QQ(i)$ where $\mathbb{K}=\QQ(\sqrt{2p_1p_2},i)$,   Asian-European Journal of Mathematics, 7  (2014), 1450021 (26 pages)
		
		
			\bibitem{AziTaouss}	 	A. Azizi    and M. Taous, 	Determinations des corps $K=\QQ(\sqrt{d},\sqrt{-1})$ dont les 2-groupes de classes sont de type (2,4) ou (2,2,2), Rend. Inst. Mat. Univ. Trieste, Vol. XL, 93-116 (2009).
		
			\bibitem{Ben17} E. Benjamin, Some real quadratic number fields with their Hilbert 2-class field having cyclic $2$-class group,  J. Number Theory, 173 (2017),  529-546.
		
		\bibitem{Ben2006} E. Benjamin, On the 2-class field tower of some imaginary biquadratic number fields,	Ramanujan J.,   11 (2006), 103–110.
		
	\bibitem{Ben1999}	E. Benjamin, On the Second Hilbert $2$-Class Field of Real Quadratic Number Fields with 2-Class Group Isomorphic to $(2,2^n)$, $n\geq2$,  Rocky Mountain J. Math., (1999), 763-788.  
		

		
		\bibitem{benjashnepreprint1993} E. Benjamin and C.   Snyder,  Number fields with 2-class group number isomorphic to $(2,2^m)$, preprint. This reference cannot be shared openly, to protect the rights of its authors. It may be requested from its authors (1993).
		
		\bibitem{BenjShn(22)} 	E. Benjamin and	 	C. Snyder,	 Real quadratic number fields with   $2$-class group of type $(2,2)$,   Math. Scand 76 (1995), 161-178.
		
		
		\bibitem{chemseddineArith} M. M. Chems-Eddin, Arithmetic of some real triquadratic fields; Units and 2-class groups,
		Moroc. J. Algebra Geom. Appl., 3 (2024), 358-387.
		
		
		\bibitem{ChemseddinGreenbergConjectureI} M. M. Chems-Eddin, Greenberg's conjecture and Iwasawa module of real biquadratic fields I, J. Number Theory, 281 (2026),   224-266
	  
		
			\bibitem{chems24} M.M. Chems-Eddin, {On the maximal unramified pro-2-extension of	$\ZZ_2$-extension of certain real biquadratic fields}, preprint (2024), http://arxiv.org/abs/2409.13574v1
	 	
		
		\bibitem{chemsmohahajjami} M. M. Chems-Eddin, M. B. T. El Hamam, and M. A. Hajjami, On the unit group and the 2-class number of $\QQ(\sqrt{2},\sqrt{p},\sqrt{q})$, Ramanujan J., 65 (2024), 1475–1510.
		
	
		
	 
		
		
		
		
		
		\bibitem{connor88}	P. E. Conner,  and J. Hurrelbrink,    {Class number parity, }{  Ser. Pure. Math. $8$,  World Scientific,  1988. }
		
		
		
		
		
		\bibitem{fukuda}  T. Fukuda,     \textit{Remarks on $\mathbb{Z}_p$-extensions of number fields,}{ Proc. Japan Acad. Ser. A Math. Sci. { 70}  $(1994)$,  $264$–$266$.}
		
		
		
		
		
		
		
		
		
		
		
		
		
		
		
		\bibitem{He-22} G. Herglotz,  {\it \"Uber einen Dirichletschen Satz,}{ Math. Z.,   12    (1922),  255--261.}
		
		
		
		
		
		\bibitem{kaplan76}	P. Kaplan,    {Sur le $2$-groupe des classes d'id\'eaux des corps quadratiques,}{ J. Reine. Angew. Math.,  283/284 (1976),  313-363.}
		
		\bibitem{Ki76}	H.  Kisilevsky,  {Number fields with class number congruent to $4$ mod $8$ and Hilbert's
			Theorem $94$,}{ J. Number Theory, {8} (1976), 271-279.} 
		

		
		
		
		
		
		\bibitem{kuvcera1995parity}	R. Ku{\v{c}}era,    On the parity of the class number of a biquadratic field, {J. Number Theory}, {52} (1995),  43–52.
		
		
		
		
		
		
		
		
		
		
		
		
		
		\bibitem{Ku-50} S. Kuroda,  {\"Uber die Klassenzahlen algebraischer Zahlk\"orper,}{ Nagoya Math. J., 1 (1950),  1--10.}
		
		
		
		
		
		\bibitem{LaxmiSaikia} H.	Laxmi,  A. Saikia,  $\mathbb{Z}$-extension of real quadratic fields with $ \mathbb{Z} /2\mathbb{Z}$
		as $2$-class group at each layer, Ramanujan J 64, 1285–1301 (2024). https://doi.org/10.1007/s11139-024-00869-8
		

		
		\bibitem{MizusawaAnnMath2014} Y. Mizusawa,  A note on semidihedral 2-class field towers and $\mathbb Z_2$-extensions, Ann. Math. Qué. 38 (2014),  73–79.
		
		\bibitem{MizusawaThesis}Y. Mizusawa, A Study of Iwasawa Theory on Class Field Towers, A dissertation submitted for the degree of Doctor of Science at Waseda University, 2004.
		
		\bibitem{Ajn}  A. Mouhib, {On the parity of the class number of multiquadratic number	fields.} J.   Number Theory,  129 (2009) 1205–1211	
		
		\bibitem{MohibMathNachr2016}  A. Mouhib, Capitulation of the 2-class group of some cyclic number fields with large degree, Math. Nachr. 289 (2016) 1927-1933.
		
		

		\bibitem{PARI}	The PARI Group, PARI/GP version 2.17.3 (64 bit), Univ. Bordeaux, 2025,
		 \href{https://pari.math.u-bordeaux.fr/}{https://pari.math.u-bordeaux.fr/}
		
		
		\bibitem{Wa-66} H. Wada,   {On the class number and the unit group of certain algebraic number fields.}{ J. Fac. Univ. Tokyo.   13  (1966), 201-209.}
		
		\bibitem{washington1997introduction}L.C. Washington, { Introduction to cyclotomic fields.}{ Graduate Texts in Mathematics, 83. Springer-Verlag, New York, 1982.}
		
		
		
		
		
	\end{thebibliography}
\end{document}